\title[]{On the convexity of the entropy along entropic interpolations}
\author{Christian L\'eonard}
\date{October 2013}
\newtheorem{theorem}{Theorem}
\newtheorem{lemma}[theorem]{Lemma}
\newtheorem{proposition}[theorem]{Proposition}
\newtheorem{corollary}[theorem]{Corollary}
\newtheorem{claim}[theorem]{Claim}
\newtheorem{definition}[theorem]{Definition}
\newtheorem{definitions}[theorem]{Definitions}
\newtheorem{assumptions}[theorem]{Assumptions}
\newtheorem{questions}[theorem]{Open questions}
\theoremstyle{remark}
\newtheorem{remark}[theorem]{Remark}
\newtheorem{remarks}[theorem]{Remarks}
\newtheorem{examples}[theorem]{Examples}
\numberwithin{theorem}{section}
\newcommand{\RR}{\mathbb{R}}
\newcommand{\Rn}{\mathbb{R}^n}
\newcommand{\EE}{\mathbb{E}}
\newcommand{\1}{\textbf{1}}
\newcommand\pf{_{\#}}
\newcommand{\vol}{\mathrm{vol}}
\renewcommand{\ae}{\textrm{-a.e.}}
    \DeclareMathOperator{\dom}{dom}
    \DeclareMathOperator{\Ric}{Ric}
\newcommand\seq[2]{(#1_#2)_{#2\ge1}}
\newcommand\Lim[1]{\lim_{#1\rightarrow\infty}}
\newcommand{\ud}{\frac{1}{2}}
\newcommand\XX{\mathcal{X}}
\newcommand\XXX{\XX^2}
\newcommand\PX{\mathrm{P}(\XX)}
\newcommand\PXX{\mathrm{P}(\XXX)}
\newcommand\PO{\mathrm{P}(\Omega)}
\newcommand\MO{\mathrm{M}_+(\Omega)}
\newcommand\OO{\Omega}
\newcommand\MX{\mathrm{M}_+(\XX)}
\newcommand\MXX{\mathrm{M}_+(\XXX)}
\newcommand\ii{{[0,1]}}
\newcommand\iX{{[0,1]\times\XX}}
\newcommand\IX{\int_{\XX}}
\newcommand\IXX{\int_{\XXX}}
\newcommand\IO{\int_\Omega}
\newcommand\Iii{\int_\ii}
\newcommand{\IRn}{\int _{\Rn}}
\newcommand{\Lf}{\overrightarrow{L}}
\newcommand{\Lb}{\overleftarrow{L}}
\newcommand{\Af}{\overrightarrow{A}}
\newcommand{\Ab}{\overleftarrow{A}}
\newcommand{\Bf}{\overrightarrow{B}}
\newcommand{\Bb}{\overleftarrow{B}}
\newcommand{\Cf}{\overrightarrow{C}}
\newcommand{\Cb}{\overleftarrow{C}}
\newcommand{\Jf}{\overrightarrow{J}}
\newcommand{\Jb}{\overleftarrow{J}}
\newcommand{\Gf}{\overrightarrow{\Gamma}}
\newcommand{\Gb}{\overleftarrow{\Gamma}}
\newcommand{\If}{\overrightarrow{I}}
\newcommand{\Ib}{\overleftarrow{I}}
\newcommand{\Tf}{\overrightarrow{\Theta}}
\newcommand{\Tb}{\overleftarrow{\Theta}}
\newcommand{\rf}{\overrightarrow r}
\newcommand{\rb}{\overleftarrow r}
\newcommand{\bbf}{\overrightarrow b}
\newcommand{\bbb}{\overleftarrow b}
\newcommand{\ts}{\theta^*}
\renewcommand{\aa}{\mathsf{a}}
\newcommand{\bb}{\mathsf{b}}
\newcommand{\cc}{\mathsf{c}}
\newcommand{\Ixy}{\sum _{x\to y} }
\newcommand{\Ixyz}{\sum _{x\to y\to z}}
\newcommand{\Ixyy}{\sum _{x\to y;x\to y'}}
\begin{document}


 \address{Modal-X. Universit\'e Paris Ouest. B\^at.\! G, 200 av. de la R\'epublique. 92001 Nanterre, France}
 \email{christian.leonard@u-paris10.fr}
 \keywords{Entropic interpolations, Lott-Sturm-Villani theory, relative entropy, diffusion processes, random walks, logarithmic Sobolev inequality}
 \subjclass[2010]{60J27,60J60,39B62}
\thanks{Partly supported by the ANR projects GeMeCoD (ANR 2011 BS01 007 01) and STAB}

\begin{abstract} 
Convexity properties  of the entropy along displacement interpolations are crucial in the Lott-Sturm-Villani theory of  lower bounded curvature of geodesic measure spaces. As discrete spaces fail to be geodesic, an alternate analogous theory is necessary in the discrete setting. 
\\
Replacing   displacement interpolations by entropic ones  allows for developing a rigorous calculus, in contrast with Otto's informal calculus.  When the underlying state space is a Riemannian manifold,  we show  that the first and second derivatives of the entropy as a function of time along entropic interpolations are expressed in terms of the standard   Bakry-\'Emery operators $\Gamma$ and $ \Gamma_2$. On the other hand, in the discrete setting new operators appear. Our approach is probabilistic; it relies on the Markov property and time reversal.
\\
We illustrate these calculations by means of Brownian diffusions on  manifolds and random walks on graphs. We also give a new unified proof, covering both the manifold and graph cases, of a logarithmic Sobolev inequality in connection with convergence to equilibrium.
\end{abstract}

\maketitle 
\tableofcontents


\section*{Introduction}

Displacement convexity of the relative entropy  plays a crucial role in the Lott-Sturm-Villani (LSV) theory of curvature lower bounds of {metric} measure spaces \cite{St06a,St06b, LV09,Vill09} and the related Ambrosio-Gigli-Savaré gradient flow approach \cite{AGS05,AGS12}.  

Let us explain shortly what is meant by displacement convexity of the relative entropy.
Let  $(\XX,d,m)$ be a metric measure space and $\PX$  denote the  space of all Borel probability measures on $\XX$. The distance $d$ is the basic ingredient of the construction of the displacement interpolations and the positive measure $m$ enters the game as being the reference measure of the relative entropy which is defined by  $H( \mu|m):=\IX \log(d \mu/dm)\,d \mu\in (\infty,\infty],$ $ \mu\in\PX$. Its displacement convexity means that along any  displacement interpolation $[ \mu_0, \mu_1] ^{ \textrm{disp}}=( \mu_t) _{ 0\le t\le 1}$ between two probability measures $ \mu_0$ and $ \mu_1$, the  entropy 
\begin{equation*}
H(t):=H( \mu_t|m),\quad 0\le t\le1
\end{equation*}
of the  interpolation as a function of time admits some  convexity lower bound.\\
Let us recall briefly  what displacement interpolations are. 
The Wasserstein pseudo-distance $W_2$ is the square root  of the optimal quadratic transport cost between $ \mu_0$ and $ \mu_1\in\PX$ given by $W_2^2( \mu_0, \mu_1):= \inf _{ \pi} \IXX d^2\, d \pi$ where the  infimum is taken over all the couplings $ \pi\in\PXX$ of $ \mu_0$ and $ \mu_1.$ It becomes a distance  on the subset $\mathrm{P}_2(\XX)$  of all $ \mu\in\PX$ such that $\IX d^2(x_o,x)\, \mu(dx)< \infty,$ for some $x_o\in\XX.$ Displacement interpolations are the geodesics of the metric space $(\mathrm{P}_2(\XX),W_2)$.  They were introduced  in  McCann's PhD thesis \cite{McC94} together with the notion of displacement convexity of the entropy. Related convex inequalities turn out to be functional and geometric inequalities (Brunn-Minkowski, Prekopa-Leindler, Borell-Brascamp-Lieb, logarithmic Sobolev, Talagrand inequalities), see \cite{McC97,CMS01}.
\\
As a typical result, it is known \cite{OV00,CMS01,SvR05} that a Riemannian manifold has a nonnegative Ricci curvature if and only if $t\mapsto H(t|\vol)$ is convex along any displacement interpolation $( \mu_t) _{ 0\le t\le 1}.$

An important hypothesis of the LSV theory is that the metric space is geodesic. This rules discrete spaces out. Something new must be found to develop an analogue of this theory in the discrete setting. Several attempts in this direction were proposed recently. Maas and Mielke  \cite{Maas11,Mie11,EM11,Mie13} have discovered a Riemannian distance on the set of probability measures such that the evolution of continuous-time Markov chains on a  graph are gradient flows of some entropy with respect to this distance. Bonciocat and Sturm  in \cite{BSt09} and Gozlan, Roberto, Samson and Tetali in \cite{GRST12} proposed two different types of interpolations on the set of probability measures on a discrete metric measure graph which play the role of displacement interpolations in the LSV theory. The author recently  proposed in \cite{Leo12c} a general procedure for constructing displacement interpolations on graphs by slowing down   another type of interpolation, called \emph{entropic interpolation}. 

Entropic interpolations are  the main actors of the present  paper. Analogously to the displacement interpolations which solve  dynamical transport problems, they solve dynamical entropy minimization problems. Although  related to the purpose of this article, \cite{Leo12c} is mainly concerned by the  slowing down asymptotic to build displacement, rather than entropic, interpolations. Displacement interpolations are connected to optimal transport, while entropic interpolations are connected to minimal entropy. 

In this article, we consider the {entropic interpolations} in their own right, without slowing down. The goal remains the same: studying convexity of the entropy along these interpolations to derive curvature  lower  bounds of some Markov generators and state spaces. Our main result states that along any entropic interpolation $[ \mu_0, \mu_1]=( \mu_t) _{ 0\le t\le 1}$ between two  probability measures $ \mu_0$ and $ \mu_1$, the first and second  derivatives of the relative entropy $H(t):=H( \mu_t| m)$ of $ \mu_t$ with respect to some reference measure $m$ can be written as 
\begin{equation}\label{eq-52}
H'(t)= \IX (\Tf \psi_t-\Tb \varphi_t)\, d \mu_t,
\qquad
H''(t)= \IX (\Tf_2 \psi_t+\Tb_2 \varphi_t)\, d \mu_t.
\end{equation}
The arrows on $ \Theta$ and $ \Theta_2$ are related to the forward and backward directions of time.
The functions $ \varphi$ and $ \psi$ depend on the endpoints  $ \mu_0$ and $ \mu_1,$ but the nonlinear operators $ \Tf, \Tb, \Tf_2$ and $ \Tb_2$ only depend on some reference $m$-stationary Markov process. For instance,  when this process is the Brownian diffusion with generator $L=( \Delta-\nabla V\cdot\nabla)/2$ on a Riemannian manifold with $m=e ^{ -V}\vol$ its reversing measure, we show that 
\begin{equation}\label{eq-51}
\Tf=\Tb= \Gamma/2,\quad \Tf_2=\Tb_2= \Gamma_2/2
\end{equation}
are respectively half the carré du champ $\Gamma u:= L(u ^2)-2uLu$ and the iterated carré du champ $ \Gamma_2 u:=L \Gamma u-2 \Gamma(u,Lu)$ which were introduced  by Bakry and \'Emery in  their  article \cite{BE85} on hypercontractive diffusion processes. In the discrete case, these operators are not linked to $ \Gamma$ and $  \Gamma_2$ anymore, but they depend  on $L$ in a different manner, see  \eqref{eq-49} and \eqref{eqd-10}. 
In the Riemannian case, $ \Gamma_2$ is related to the Ricci curvature via  Bochner's formula \eqref{eq-50}. This is the main connection between  geometry and  displacement convexity. Because of the tight relation \eqref{eq-51} between $ ( \Theta,\Theta_2)$ and $(\Gamma, \Gamma_2)$ in the Riemannian case, it is natural to expect that $( \Theta, \Theta_2)$ has also some geometric content in the discrete case. Does it allow for an efficient definition of curvature of graphs? This question is left open in the present paper. However, based on $ \Theta$ and $ \Theta_2,$ we give a unified proof of the logarithmic Sobolev inequality on manifolds and a modified version on graphs. This is a clue in favor of the relevance of the entropic interpolations.

An  advantage of the entropic  calculus, compared to its displacement analogue, is that entropic interpolations are regular in general. This is in contrast with displacement interpolations. Otto's informal calculus provides heuristics that necessitate to be rigorously proved by means of alternate methods while on the other hand entropic interpolations allow for a direct rigorous calculus. As a matter of fact, it is proved in \cite{Leo12a,Leo12c} that displacement interpolations are semiclassical limits of entropic interpolations: entropic interpolations are regular approximations of displacement interpolations.

Another pleasant feature of the  entropic  approach is that it is not only available for reversible reference Markov dynamics, but also for stationary dynamics (in the reversible case, the time arrows on $ \Theta$ and $ \Theta_2$ disappear).

The article's point of view is probabilistic. Its basic ingredients are  measures on  path spaces: typically, Brownian diffusions on manifolds and random walks on graphs. The Markov property plays a crucial role and we take advantage of its time symmetry. Recall that it  states that conditionally on the knowledge of the present state, past and future events are independent. In particular, a time-reversed Markov process is still Markov.  Time-reversal stands in  the core of our approach (even when the reference stochastic process is assumed to be reversible); it explains the forward and backward arrows on $ \Theta$ and $ \Theta_2$. In contrast with analytic approaches, very few probabilistic attempts  have been implemented to explore geometric and functional inequalities. In this respect, let us cite  the contributions of Cattiaux \cite{Ca04} and Fontbona and Jourdain \cite{FJ13} where stochastic calculus is central. In the present article, stochastic calculus is secondary. The symmetry of the Markov property allows us to proceed  with basic measure theoretical  technics. 

\subsection*{Outline of the paper}

The article is organized as follows. Entropic interpolations are discussed at   Section \ref{sec-inter}. In particular, we describe their dynamics. This permits us to write their equation of motion and derive our main abstract result \eqref{eq-52} at Section \ref{sec-second}. This abstract result is exemplified  with Brownian diffusion processes at Section \ref{sec-diff} and random walks on a countable graph at Section \ref{sec-RW}. A unified proof of an entropy-entropy production inequality (logarithmic Sobolev inequality) in both the Riemannian and discrete graph settings,  is given at Section  \ref{sec-equilib} in connection with convergence to equilibrium. The stationary non-reversible case is investigated. At Section \ref{sec-questions}, we propose a heuristics based on  thought experiments in order  to  grasp the link between entropic and displacement interpolations. Finally, we address several open questions related to curvature and entropic interpolations, keeping in mind the LSV theory of geodesic measure spaces as a guideline. This article is  a preliminary step and these open questions should be seen as part of a program toward an analogue of the LSV theory that is available in a discrete setting.

\subsection*{Notation}

For any measurable space  $Y$, we denote by $ \mathrm{P}(Y)$ and $ \mathrm{M}_+(Y)$ the sets of all probability measures and positive   measures on $Y$.
Let  $\Omega=D([0,1],\XX)$ be the space of all right-continuous and left-limited paths from the time interval $[0,1]$  taking their values in a Polish state space $\XX$  endowed with its Borel $\sigma$-field. The canonical process  $X=(X_t)_{0\le t\le 1}$ is defined by $X_t(\omega)=\omega_t\in\XX$ for all $0\le t\le 1$ and $\omega=(\omega_t)_{0\le t\le 1}\in\OO$. As usual, $\Omega$ is endowed with the canonical $\sigma$-field $\sigma(X)$. 
For any $\mathcal{T}\subset\ii$ and any positive measure $Q$ on $\OO,$ we denote $X_\mathcal{T}=(X_t)_{t\in \mathcal{T}},$ $Q_\mathcal{T}=(X_\mathcal{T})\pf Q,$ $\sigma(X_\mathcal{T})$ is the $\sigma$-field generated by $X_\mathcal{T}$.
In particular, for each $t\in\ii,$ $Q_t=(X_t)\pf Q\in \MX$ and $Q _{ 01}=(X_0,X_1)\pf Q\in\MXX.$

\section{Entropic interpolation} \label{sec-inter}

Entropic interpolations are defined and their equations of motion are derived.  The continuous  and discrete space cases are both imbedded in the same abstract setting. This prepares next section where the first and second derivatives of $H(t):=H \mu_t|m)$ are calculated.

\subsection*{$(f,g)$-transform of a Markov process}

The $h$-transform of a Markov process  was introduced by Doob \cite{Doob57} in 1957. It is revisited and slightly extended in two directions:
\begin{enumerate}
\item
We consider a two-sided version of the $h$-transform, which we call  $(f,g)$-transform, taking advantage of the invariance of the Markov property with respect to time reversal;
\item
We   extend   the notion of Markov property to unbounded positive measures on $\OO,$ having in mind two typical examples:
\begin{itemize}
\item
The reversible Wiener process on $\Rn,$ i.e.\ the law of the Brownian motion with Lebesgue measure as its  initial law;
\item
The reversible simple random walk on a countable locally finite graph.\end{itemize}
\end{enumerate}

\subsubsection*{Assumptions on $R$}

We specify some $R\in\MO$ which we call the \emph{reference path measure} and we  assume that it is Markov and $m$-stationary\footnote{This stationarity assumption is not necessary and one could replace $m$ by $R_t$ everywhere. It simply makes things more  readable and shortens some computations.} where $m\in\MX$ is a $ \sigma$-finite positive measure on $\XX$. See Definitions \ref{def-markov} and \ref{def-01} at the appendix Section \ref{sec-def}.

\begin{definition}[$(f,g)$-transform] \label{def-fg}
Let $f_0,g_1:\XX\to[0,\infty)$ be two nonnegative   functions  in $L^2(m).$ The $(f,g)$-transform $P\in\PO$ of $R$ associated with the couple of functions $(f_0,g_1)$ is defined by
\begin{equation}	\label{eq-15}
P:=f_0(X_0)g_1(X_1)\,R\in\PO
\end{equation}
with 
$	
\IXX f_0(x)g_1(y)\,R _{01}(dxdy)=1
$	
for $P$ to be a probability measure. 
\end{definition}

Note that $f_0,g_1\in L^2(m)$ implies that $f_0(X_0)g_1(X_1)\in L^1(R).$ Indeed, denoting $F=f_0(X_0)$ and $G=g_1(X_1),$ we see with $R_0=R_1=m$ that $E_RF^2=\|f_0\|^2 _{ L^2(m)},$ $E_RG^2=\|g_1\|^2 _{ L^2(m)} and $ $E_R(FG)=E_R[FE_R(G\mid X_0)]\le \sqrt{E_RF^2 E_R(E_R(G\mid X_0)^2)} \le \sqrt{E_RF^2E_RG^2}=\|f_0\| _{ L^2(m)}\|g_1\| _{ L^2(m)}< \infty$.

\begin{remarks}\label{rem-01}
Let us write some easy facts about $(f,g)$-transforms and $h$-transforms.
\begin{enumerate}[(a)]
\item
Taking $f_0=\rho_0$ and $g_1=1$ gives us $P=\rho_0(X_0)R$ which is the path measure with initial marginal $\mu_0= \rho_0\,m\in\PX$ and the same  forward Markov dynamics as $R.$
\item
Symmetrically, choosing $f_0=1$ and $g_1=\rho_1$ corresponds  to $P=\rho_1(X_1)R$ which is the path measure with final marginal $\mu_1= \rho_1\,m\in\PX$ and the same  backward Markov dynamics as $R.$
\item
Doob's $h$-transform is an extension  of item (b). It is defined by $P=h(X _{ \tau})\,R _{ [0, \tau]}$ where $ \tau$ is a stopping time and $h$ is such that $P$ is  a probability measure.
\end{enumerate}
\end{remarks}

Let us denote the backward and forward Markov transition kernels of $R$ by
\begin{equation*}
\left\{
\begin{array}{lcll}
\rb(s,\cdot;t,z)&:=&R(X_s\in \cdot\mid X_t=z), &0\le s\le t\le 1,\\
\rf(t,z;u,\cdot)&:=&R(X_u\in \cdot\mid X_t=z),&0\le t\le u\le1,
\end{array}
\right.
\end{equation*}
and 
for $m$-almost all $z\in\XX,$
\begin{equation}\label{eq-23}
\left\{\begin{array}{lclcl}
f_t(z) &:=& E_R(f_0(X_0)\mid X_t=z)&=&\IX f_0(x)\, \rb(0,dx;t,z)\\
g_t(z) &:=& E_R(g_1(X_1)\mid X_t=z)&=&\IX\rf(t,z;1,dy)\, g_1(y).
\end{array}\right.
\end{equation}

The relation between $(f_0,g_1)$ and the endpoint marginals $\mu_0, \mu_1\in\PX$ is
\begin{equation}\label{eq-12b}
\left\{
\begin{array}{lcll}
\rho_0&=&f_0g_0, &  m\ae\\ \\
	\rho_1&=& f_1g_1, &  m\ae
\end{array}\right.
\end{equation} 
where the density functions $ \rho_0$ and $ \rho_1$ are defined by
$\left\{ \begin{array}{lcl}
\mu_0&=&\rho_0\, m\\
\mu_1&=& \rho_1\,m
\end{array}\right..
$
The  system of equations \eqref{eq-12b} was exhibited by Schrödinger in 1931 \cite{Sch31} in connection with the entropy minimization problem \eqref{Sdyn} below. 

\subsection*{Entropic interpolation}
Let us introduce the main notion of this article.

\begin{definition}[Entropic interpolation]\label{def-entint}
Let  $P\in\PO$ be the $(f,g)$-transform of $R$ given by \eqref{eq-15}. Its flow of marginal measures
 $$\mu_t:=(X_t)\pf P\in\PX,\quad 0\le t\le 1,$$ is called  the $R$-\emph{entropic interpolation} between $\mu_0$ and $\mu_1$ in $\PX$ and is denoted by
 $$[\mu_0,\mu_1]^R:=(\mu_t)_{t\in\ii}.$$
When the context is clear, we simply write $[ \mu_0,\mu_1],$ dropping the superscript $R.$
\end{definition}

Next theorem tells us that $\mu_t$ is absolutely continuous with respect to the reference measure  $m$  on $\XX.$ We denote by 
 $$
 \rho_t:=d \mu_t/dm,\quad 0\le t\le1,
 $$ 
its density with respect to $m$. 

Identities \eqref{eq-12b}  extend to all $0\le t\le 1$ as  next result shows.

\begin{theorem}\label{resd-01}
Let  $P\in\PO$ be the $(f,g)$-transform of $R$ given by \eqref{eq-15}. Then,  $P$ is Markov and for all  $0\le t\le1,$ $ \mu_t= \rho_t\,m$ with
\begin{equation}\label{eq-24} 
\rho_t = f_t g_t,\quad \  m\ae
\end{equation}  
\end{theorem}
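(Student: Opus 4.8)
The plan is to prove the two assertions—Markovianity of $P$ and the factorization $\rho_t=f_tg_t$—essentially in one stroke, by computing the conditional structure of $P$ explicitly from its Radon-Nikodym density $f_0(X_0)g_1(X_1)$ against the Markov reference $R$. The key structural fact I would exploit is that this density factors as a product of a function of the past endpoint and a function of the future endpoint; when conditioned on the present state $X_t$, the Markov property of $R$ decouples past and future, and the two factors can be absorbed into the backward and forward conditional laws respectively.

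First I would establish the factorization \eqref{eq-24}. The density of $\mu_t$ against $m$ is, for a test function $u$ on $\XX$,
\begin{equation*}
\IX u\,d\mu_t = E_P\,u(X_t) = E_R\big[f_0(X_0)g_1(X_1)u(X_t)\big].
\end{equation*}
Conditioning on $X_t$ and invoking the Markov property of $R$ (so that $X_0$ and $X_1$ are $R$-independent given $X_t$) splits the inner expectation as a product:
\begin{equation*}
E_R\big[f_0(X_0)g_1(X_1)\mid X_t\big]
= E_R\big[f_0(X_0)\mid X_t\big]\,E_R\big[g_1(X_1)\mid X_t\big]
= f_t(X_t)\,g_t(X_t),
\end{equation*}
using the very definitions \eqref{eq-23} of $f_t$ and $g_t$. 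Hence $E_P\,u(X_t)=E_R\big[u(X_t)f_t(X_t)g_t(X_t)\big]=\IX u\,f_tg_t\,dm$ since $R_t=m$, which gives $\rho_t=f_tg_t$ $m$-a.e.\ and in particular shows $\mu_t\ll m$.

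For the Markov property of $P$, I would verify directly that conditionally on $X_t$, the past $X_{[0,t]}$ and the future $X_{[t,1]}$ are $P$-independent. The clean way is to test against products of bounded functionals $\Phi$ of the past and $\Psi$ of the future, compute $E_P(\Phi\Psi\mid X_t)$ by unwinding the density and applying the tower property together with $R$'s Markov property, and show it factors as $E_P(\Phi\mid X_t)\,E_P(\Psi\mid X_t)$. The factor $f_0(X_0)$ is past-measurable and $g_1(X_1)$ is future-measurable, so each attaches to its own side cleanly; the normalizing constant cancels. This is the natural two-sided generalization of the classical fact that Doob's $h$-transform preserves the Markov property (Remark \ref{rem-01}(c)).

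The main obstacle will be the measure-theoretic bookkeeping in the unbounded regime: $R$ need not be a probability measure (the Wiener process with Lebesgue initial law, the reversible random walk), so I must confirm that all conditional expectations are well defined and finite. The integrability lemma following Definition \ref{def-fg}, namely $f_0(X_0)g_1(X_1)\in L^1(R)$ with the Cauchy-Schwarz bound $\|f_0\|_{L^2(m)}\|g_1\|_{L^2(m)}$, together with $m$-stationarity ($R_0=R_1=m$), is exactly what guarantees $f_t,g_t$ are finite $m$-a.e.\ and that the conditionings above make sense. I would therefore lean on the extended notion of Markov property for $\sigma$-finite path measures (Definitions \ref{def-markov} and \ref{def-01}) and check that the disintegrations used are justified under these assumptions, which is the only genuinely delicate point; the algebraic decoupling itself is immediate from the product form of the density.
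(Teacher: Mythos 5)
Your proposal is correct and follows essentially the same route as the paper: the factorization $\rho_t=f_tg_t$ is obtained exactly as in the paper's proof, by writing $\rho_t=dP_t/dR_t=E_R[f_0(X_0)g_1(X_1)\mid X_t]$ and using the conditional independence of $X_0$ and $X_1$ given $X_t$ under the Markov measure $R$, together with $R_t=m$. The Markov property of $P$, which the paper delegates to \cite[Thm.\,3.4]{Leo12e}, is handled by you via the standard past/future test-functional argument, which is the expected content of that reference.
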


\begin{proof}
Without getting into detail, the proof works as follows. As $m=R_t,$
\begin{align*}
\rho_t(z):= \frac{dP_t}{dR_t}(z)=E_R \left[ \frac{dP}{dR}|X_t=z\right] &=E_R \left[f_0(X_0)g_1(X_1)| X_t=z\right]\\ &=E_R \left[f_0(X_0)| X_t=z\right]E_R \left[g_1(X_1)| X_t=z\right]=:f_tg_t(z).
\end{align*}
The Markov property of $R$ at the last but one equality is essential in this proof. For more detail, 
see \cite[Thm.\,3.4]{Leo12e}.
\end{proof}

With  \eqref{eq-24}, we see that an equivalent analytical definition of $[ \mu_0,\mu_1]^R$ is as follows. 

\begin{theorem}
The entropic interpolation $[ \mu_0,\mu_1]^R$ satisfies $ \mu_t= \rho_t\,m$   with
\begin{equation}\label{eq-02}
\rho_t(z)= \IX f_0(x)\,\rb(0,dx;t,z) \IX  \rf(t,z;1,dy)\,g_1(y),\quad   \textrm{ for } m\ae\  z\in\XX,
\end{equation}
for each $0\le t\le1$.
\end{theorem}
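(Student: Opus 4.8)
The plan is to obtain \eqref{eq-02} as an immediate corollary of Theorem \ref{resd-01}, by simply unwinding the definitions of $f_t$ and $g_t$ recorded in \eqref{eq-23}. Theorem \ref{resd-01} has already done the genuine work: it establishes via the Markov property of $R$ the pointwise factorization $\rho_t = f_t g_t$ holding $m\ae$ (this is \eqref{eq-24}). So all that remains is to rewrite each of the two factors as an integral against the appropriate one-sided transition kernel of $R$, and no new probabilistic input is required.

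First I would recall from \eqref{eq-23} the two conditional-expectation representations $f_t(z) = E_R(f_0(X_0)\mid X_t=z)$ and $g_t(z) = E_R(g_1(X_1)\mid X_t=z)$, valid for $m$-almost every $z\in\XX$. The point is that these conditional expectations are exactly the kernel integrals
\[
f_t(z) = \IX f_0(x)\, \rb(0,dx;t,z), \qquad g_t(z) = \IX \rf(t,z;1,dy)\, g_1(y),
\]
which is nothing but the definition of the backward kernel $\rb(0,\cdot\,;t,z) = R(X_0\in\cdot\mid X_t=z)$ and the forward kernel $\rf(t,z;1,\cdot) = R(X_1\in\cdot\mid X_t=z)$. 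Substituting both expressions into $\rho_t = f_t g_t$ then yields \eqref{eq-02} verbatim, for each fixed $0\le t\le1$ and $m\ae\ z$. The finiteness of all quantities involved is already guaranteed by the integrability $f_0(X_0)g_1(X_1)\in L^1(R)$ checked right after Definition \ref{def-fg}, so the conditional expectations, and hence the kernel integrals, are well defined and finite $m\ae$

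There is essentially no obstacle here; the statement is a pure rewriting once Theorem \ref{resd-01} is in hand. The only step that formally deserves a word is the passage from the conditional expectation to the kernel integral, which is a disintegration statement relying on the existence of regular conditional distributions $\rb$ and $\rf$. Since $\XX$ is Polish, such regular conditional distributions exist by standard measure theory, and they are precisely the backward and forward Markov transition kernels introduced before \eqref{eq-23}. With these kernels granted, \eqref{eq-02} follows by substitution, so I would keep the proof to a single line referring back to Theorem \ref{resd-01} and \eqref{eq-23}.
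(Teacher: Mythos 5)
Your proposal is correct and coincides with the paper's own treatment: the paper presents this theorem as an immediate restatement of Theorem \ref{resd-01} combined with the kernel representations \eqref{eq-23}, which is exactly the substitution you perform. Nothing further is needed.
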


\subsubsection*{Marginal flows of bridges are (possibly degenerate) entropic interpolations} 
Let us have a look at the $R$-entropic interpolation between the Dirac measures $ \delta_x$ and $ \delta_y$. 
\begin{enumerate}[(a)]
\item
When $R _{ 01}(x,y)>0,$ $[ \delta_x, \delta_y]^R$ is the time marginal flow $(R ^{ xy}_t) _{ 0\le t\le1}$ of the bridge $R ^{ xy}.$ Indeed, taking $ \mu_0= \delta_x$ and $ \mu_1= \delta_y,$  a solution $(f_0,g_1)$ of \eqref{eq-12b} is $f_0=\1 _{ x}/R _{ 01}(x,y)$ and $g_1= \1 _{ y}.$ It follows that the corresponding $(f,g)$-transform is $f_0(X_0)g_1(X_1)\,R=R _{ 01}(x,y) ^{ -1}\1 _{ \left\{X_0=x,X_1=y\right\} }\,R=R ^{ xy}.$
\item
When $R _{ 01}(x,y)=0,$ $[ \delta_x, \delta_y]^R$ is undefined. Indeed, Definition \ref{def-entint} implies that $ \mu_0, \mu_1\ll m.$
 Let us take the sequences of functions $f_0^n=c_n ^{ -1} \1 _{ B(x,1/n)}$ and $g_1^n=\1 _{ B(y,1/n)}$ with $B(x,r)$ the open ball centered at $x$ with radius $r$ and $c_n= R _{ 01}(B(x,1/n)\times B(y,1/n))>0$ the normalizing constant   which is assumed to be positive for all $n\ge1$. The corresponding $(f,g)$-transform of $R$ is the conditioned probability measure $P^n=R(\cdot\mid X_0\in B(x,1/n), X_1\in B(y,1/n))$ which converges as $n$ tends to infinity  to the bridge $R ^{ xy}$ under some assumptions, see \cite{ChU11} for instance. As a natural extension, one can see $[ \delta_x, \delta_y]^R$ as the time marginal flow $t\mapsto R ^{ xy}_t$ of the bridge $R ^{ xy}$. 
 \item
 When the transition kernels  are absolutely continuous with respect to $m$, i.e.\ $\rf(t,z;1,dy)=\rf(t,z;1,y)\,m(dy),$ for all $(t,z)\in [0,1)\times\XX$ and $\rb(0,dx;t,z)=\rb(0,x;t,z)\, m(dx),$ for all $(t,z)\in (0,1]\times \XX$,  then the density functions are equal: $\rf=\rb:=r$ (this comes from the $m$-stationarity of $R$),  and \eqref{eq-02} extends as
 \begin{equation*}
\frac{dR ^{ xy}_t}{dm} (z)= \frac{r(0,x;t,z)r(t,z;1,y)}{r(0,x;1,y)},\quad 0<t<1.
 \end{equation*}
 \item
 Note that in general, for any intermediate time $0<t<1,$ $R ^{ xy}_t$ is not a Dirac mass. This is in contrast with the standard displacement interpolation $[ \delta_x, \delta_y] ^{ \mathrm{disp}}$ whose typical form is $ \delta _{ \gamma ^{ xy}_t}$ where $ \gamma ^{ xy}$ is the constant speed geodesic between $x$ and $y$ when this geodesic is unique.
\end{enumerate}

Any entropic interpolation is a mixture of marginal flows of bridges. This is expressed at \eqref{eq-05b}. For a proof, see \cite{Leo12e}.

\subsection*{Schrödinger problem}

The notion of {entropic} interpolation is related to the  entropy minimization  problem \eqref{Sdyn} below.  In order to state this problem properly, let us first recall an informal definition of the relative entropy
\begin{equation*}
H(p|r):=\int \log(dp/dr)\,dp\in (- \infty,\infty]
\end{equation*}
of the probability measure $p$ with respect to the reference $ \sigma$-finite measure $r$; see the appendix Section \ref{sec-def} for further detail.
The dynamical Schrödinger problem associated with the {reference path measure} $R\in\MO$ is 
\begin{equation*}\label{Sdyn}
H(P| R)\to \textrm{min};\qquad P\in\PO: P_0=\mu_0,\ P_1=\mu_1
 \tag{S$_{\mathrm{dyn}}$}
\end{equation*}
It consists of minimizing the relative entropy
$
H(P|R)
$ of the path probability measure $P$ with respect to the reference path measure $R$  subject to the constraint that $P_0= \mu	_0$ and $P_1= \mu_1$
where $\mu_0, \mu_1\in\PX$ are prescribed initial and final marginals.

We shall need a little bit more than $f_0,g_1\in L^2(m)$ in the sequel. The following set of assumptions implies that some relative entropies are finite and will be invoked at Theorem \ref{res-12} below.

\begin{assumptions}\label{ass-02}
In addition to $f_0,g_1\in L^2(m)$ and the normalization condition \\ $\IXX f_0(x)g_1(y)\, R _{ 01}(dxdy)=1,$ the functions $f_0$ and $g_1$ entering the definition of the $(f,g)$-transform $P$ of $R$ given at \eqref{eq-15} satisfy 
\begin{equation}\label{eq-13}
\IXX [\log_+ f_0(x) +\log_+ g_1(y)] f_0(x)g_1(y)\,R _{01}(dxdy)<\infty,
\end{equation}
where $\log_+h:=\1 _{\{ h>1\}}\log h,$ 
 and as a convention $0\log 0=0$.
\end{assumptions}

\begin{theorem}\label{res-12}
Under the Assumptions \ref{ass-02}, the $(f,g)$-transform $P$ of $R$ which is defined at \eqref{eq-15} is the unique solution of  \eqref{Sdyn} where the prescribed constraints $\mu_0= \rho_0\,m$ and  $\mu_1= \rho_1\,m$ are chosen to satisfy \eqref{eq-12b}.
\end{theorem}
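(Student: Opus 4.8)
The plan is to reduce the dynamical problem \eqref{Sdyn} to a \emph{static} entropy minimization over couplings in $\PXX$ by means of the additivity of relative entropy with respect to the endpoint disintegration, and then to solve the static problem explicitly via a Gibbs-type tangent identity. First I would record that $P$ is admissible with finite cost. Since $dP/dR=f_0(X_0)g_1(X_1)$ is a function of the endpoints only,
\begin{equation*}
H(P|R)=\IXX[\log f_0(x)+\log g_1(y)]\,f_0(x)g_1(y)\,R_{01}(dxdy),
\end{equation*}
whose positive part is finite by \eqref{eq-13}; together with Theorem \ref{resd-01} (which gives $P_0=\mu_0$, $P_1=\mu_1$) this shows that $P$ is a feasible point with $H(P|R)<\infty$.

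Next, for any path measure $Q$ disintegrate along the endpoints as $Q=\IXX Q^{xy}\,Q_{01}(dxdy)$, with $Q^{xy}:=Q(\,\cdot\mid X_0=x,X_1=y)$ the bridge of $Q$. The chain rule for relative entropy then gives, for any competitor $P'$ with $P'_0=\mu_0$ and $P'_1=\mu_1$,
\begin{equation*}
H(P'|R)=H(P'_{01}|R_{01})+\IXX H({P'}^{xy}|R^{xy})\,P'_{01}(dxdy)\ \ge\ H(P'_{01}|R_{01}).
\end{equation*}
Because $dP/dR$ depends on $(X_0,X_1)$ alone, the bridges of $P$ and $R$ coincide, $P^{xy}=R^{xy}$, so the second term vanishes for $P$ and $H(P|R)=H(P_{01}|R_{01})$. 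Hence it suffices to prove that $\pi^*:=P_{01}$, which by \eqref{eq-15} has density $d\pi^*/dR_{01}(x,y)=f_0(x)g_1(y)$, is the unique minimizer of $\pi\mapsto H(\pi|R_{01})$ among the couplings $\pi\in\PXX$ of $\mu_0$ and $\mu_1$. The chain $H(P'|R)\ge H(P'_{01}|R_{01})\ge H(\pi^*|R_{01})=H(P|R)$ then proves optimality; and if equality holds throughout, the second equality forces $P'_{01}=\pi^*$ (static uniqueness) while the first forces ${P'}^{xy}=R^{xy}$ for $P'_{01}$-a.e.\ $(x,y)$, so $P'$ shares both the endpoint law and the bridges of $P$, whence $P'=P$.

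For the static problem I would invoke the tangent identity, valid for any coupling $\pi$ of $\mu_0,\mu_1$:
\begin{equation*}
H(\pi|R_{01})=H(\pi|\pi^*)+\IXX[\log f_0(x)+\log g_1(y)]\,\pi(dxdy).
\end{equation*}
The crucial observation is that the last integral equals $\IX\log f_0\,d\mu_0+\IX\log g_1\,d\mu_1$, a quantity depending only on the prescribed marginals and \emph{not} on the coupling $\pi$. Specializing to $\pi=\pi^*$ identifies this common value with $H(\pi^*|R_{01})$, so that $H(\pi|R_{01})=H(\pi|\pi^*)+H(\pi^*|R_{01})\ge H(\pi^*|R_{01})$, with equality iff $H(\pi|\pi^*)=0$, i.e.\ $\pi=\pi^*$. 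This delivers both optimality and uniqueness at the static level, and, combined with the previous step, the full statement.

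I expect the main obstacle to be measure-theoretic rather than algebraic: justifying the chain rule and the tangent identity when $R$ is only $\sigma$-finite (hence possibly unbounded), and ensuring that every integral above is well defined and never of the form $\infty-\infty$. This is precisely the role of Assumptions \ref{ass-02}: the bound \eqref{eq-13} controls the positive parts, while the factorizations $\rho_0=f_0g_0$ and $\rho_1=f_1g_1$ from \eqref{eq-12b} allow one to rewrite $\IX\log f_0\,d\mu_0+\IX\log g_1\,d\mu_1$ through the finite quantity $H(\pi^*|R_{01})$ and thereby control the negative parts, so that $\int\log(d\pi^*/dR_{01})\,d\pi$ is finite. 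The reduction to the static problem and the Gibbs inequality are robust; the real care lies in the disintegration and the integrability bookkeeping.
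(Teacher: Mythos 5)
Your argument is correct and follows essentially the same route as the proof the paper delegates to \cite[Thm.\,3.3]{Leo12e}: additivity of relative entropy over the endpoint disintegration reduces \eqref{Sdyn} to the static minimization on $\PXX$, and the product-form density $d\pi^*/dR_{01}(x,y)=f_0(x)g_1(y)$ is then shown optimal and unique via the Pythagorean/tangent identity, with Assumption \ref{ass-02} (through \eqref{eq-13}) providing exactly the integrability needed to keep the bookkeeping out of the $\infty-\infty$ regime. Nothing essential is missing.
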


\begin{proof}
See \cite[Thm.\,3.3]{Leo12e}.
\end{proof}

Note that  the solution of \eqref{Sdyn} may not be a $(f,g)$-transform of $R$. More detail about the Schrödinger problem can be found in the survey paper \cite{Leo12e}.

\subsubsection*{$[ \mu,\mu]^R$ is not constant in time}
One must be careful when employing the term interpolation as regards {entropic} interpolations, because in general when $ \mu_0= \mu_1=: \mu,$ the interpolation $[ \mu,\mu]^R$ is not constant in time.
 Let us give two examples. 
 \begin{enumerate}[(a)]
 \item
 Suppose that $R _{ 01}(x,x)>0.$ Then the solution of \eqref{Sdyn} with $ \mu_0= \mu_1= \delta_x$ is the bridge $R ^{ xx}$ and $[ \delta_x,\delta_x]^R$ is the marginal flow $(R ^{ xx}_t) _{ 0\le t\le1}$ which is not constant in general.
 \item
Consider  $R$ to be  the reversible Brownian motion on the manifold $\XX$. Starting from $ \mu$  at time $t=0,$ the entropic minimizer $P$ is such that  $ \mu_t:=P_t$ gets closer to the invariant volume measure $m=\vol$ on the time interval [0,1/2] and then goes back to $ \mu$ on the remaining time period [1/2,1]. Let us show this. On the one hand, with Theorem \ref{resd-06} below, it is easy to show that the function $t\in[0,1]\mapsto H(t):=H( \mu_t|\vol)$ is strictly convex whenever $ \mu\not =\vol$. And on the other hand, a time reversal argument tells us that  $H(t)=H(1-t),$ $0\le t\le1.$  The only constant entropic interpolation is $[\vol,\vol]^R.$
 \end{enumerate}

\subsubsection*{$[ \mu,\mu]^{ \textrm{disp}}$ is  constant in time}

Unlike the entropic interpolation $[ \mu,\mu]^R$, McCann's displacement interpolation $[ \mu,\mu] ^{ \textrm{disp}}$ has the pleasant property to be   constant in time. See \eqref{eq-04b} below for the definition of the displacement interpolation and compare with the representation \eqref{eq-05b} of the entropic interpolation.

\subsection*{Forward and backward stochastic derivatives of a Markov measure}

Since $P$ is Markov, its dynamics is characterized by either its forward stochastic derivative and its initial marginal $ \mu_0$ or its backward stochastic derivative and its final marginal $ \mu_1$. Before computing these derivatives, let us  recall some basic facts about these notions.

Let $Q\in\MO$ be a Markov measure. Its forward stochastic derivative $\partial+\Lf^Q$ is defined by 
$$
[\partial_t+\Lf^Q_t](u)(t,z):=\lim _{h\downarrow 0}h ^{-1} E_Q\big(u(t+h,X _{t+h})-u(t,X_t)\mid X_t=z\big)
$$ 
for any measurable function $u:\iX\to\RR$ in the set $\dom\Lf^Q$ for which this limit exists $Q_t\ae$ for all $0\le t< 1$. In fact this definition is only approximate, we give it here as a support for understanding the relations between the forward and backward derivatives. For a precise statement see \cite[\S 2]{Leo11b}. Since the time reversed $Q^*$ of $Q$ is still Markov, $Q$ admits a backward stochastic derivative  $-\partial+\Lb^Q$ which  is defined by 
$$
[-\partial_t+\Lb^Q_t]u(t,z):=\lim _{h\downarrow 0} h ^{-1} E_Q\big(u(t-h,X _{t-h})-u(t,X_t)\mid X_t=z\big)
$$ 
for any measurable function $u:\iX\to\RR$ in the set $\dom\Lb^Q$ for which this limit exists $Q_t\ae$ for all $0< t\le 1$. Remark that 
\begin{equation*}
\Lb^Q_t=\Lf ^{Q^*}_{1-t},\quad 0< t\le 1.
\end{equation*}

 It is proved in  \cite[\S 2]{Leo11b} that these stochastic derivatives are extensions of the extended forward and backward generators  of $Q$ in the semimartingale sense, see  \cite{DM4}. In particular, they offer us a natural way for computing the generators.

\subsection*{Forward and backward dynamics of the entropic interpolation}

The dynamics in both directions of time of $[ \mu_0,\mu_1]^R$ are specified by the stochastic derivatives $\Af:=\Lf^P$ and $\Ab:=\Lb^P$ of  the  Markov measure $P\in\PO$ which appears at Definition \ref{def-entint}.
 For simplicity, we also denote $\Lf^R=\Lf$ and $\Lb^R=\Lb$ without the superscript $R$ and assume that 
 \begin{equation*}
 \Lf_t=\Lf,\quad \Lb_t=\Lb,\quad \forall 0\le t\le1,
 \end{equation*}
 meaning that the forward and backward transition mechanisms of $R$ do not depend on $t$.
 
To derive the expressions of  $\Af$ and $\Ab,$ we need to introduce the carré du champ $ \Gamma$ of both $R$ and its time reversed $R^*$.  The exact definition of  the extended carré du champ $ \Gamma$ and its domain is given at  \cite[Def.\,4.10]{Leo11b}.  In restriction to the   functions $u,v$ on $\XX$ such that $u,v$ and $uv$ are in $\dom \Lf$, we recover the standard definition    
\begin{equation*}
\left\{
\begin{array}{lcl} 
\Gf(u,v)&:=&\Lf (uv)-u\Lf  v-v\Lf  u\\
\Gb (u,v)&:=&\Lb (uv)-u\Lb  v-v\Lb  u
\end{array}\right.
\end{equation*}

When $R$ is a reversible path measure, we denote  $L=\Lf=\Lb$ and $  \Gamma=\Gf=\Gb$, dropping the useless time arrows.

\begin{remark}\label{rem-02}
In the standard Bakry-\'Emery setting \cite{BE85,Ba92}, $\XX$ is a Riemannian manifold  and one considers the self-adjoint Markov diffusion  generator on $L^2(\XX,e ^{ -V}\vol)$   given by
$
\widetilde L= \Delta-\nabla V\cdot\nabla,
$
with $\vol$  the volume measure and $V:\XX\to\RR$ a regular function.
The usual definition of the carré du champ of   $\widetilde L$ is   $\widetilde\Gamma(u,v):=[\widetilde L(uv)-u\widetilde Lv-v\widetilde Lu]/2.$ In the present paper,  $ \Gamma$ is not divided by 2 and we  consider 
$L=\widetilde L/2,
$ 
i.e.\ 
\begin{equation}\label{eq-11}
L=(-\nabla V\cdot \nabla+\Delta)/2,
\end{equation}
which corresponds to an SDE driven by a standard Brownian motion. Consequently,  $ \Gamma(u,v)= \widetilde\Gamma(u,v)=\nabla u\cdot\nabla v.$
\end{remark}

In general, even if $R$ is a reversible path measure, the prescribed marginal constraints enforce a time-inhomogeneous transition mechanism: the forward and backward derivatives $(\partial_t+\Af_t)_{0\le t\le1}$ and $(-\partial_t+\Ab_t)_{0\le t\le1}$ of $P$  depend explicitly on $t$. It is  known  (see \cite{Leo11b} for instance) that for any function $u:\XX\to\RR$ belonging to some class  of regular functions to be made precise,

\begin{equation}\label{eqd-05}
\left\{
\begin{array}{lcll} 
\Af_t u(z)&=&\displaystyle{\Lf  u(z)+\frac{\Gf (g_t,u)(z)}{g_t(z)}},& (t,z)\in [0,1)\times\XX,\\
\Ab_t u(z)&=&\displaystyle{\Lb  u(z)+\frac{\Gb (f_t,u)(z)}{f_t(z)}},& (t,z)\in (0,1]\times\XX,
\end{array}
\right.
\end{equation}
where $f$ and $g$ are defined at \eqref{eq-23}.
Because of \eqref{eq-24}, for any $t$ no division by zero occurs $\mu_t\ae$ 
For \eqref{eqd-05} to be meaningful, it is necessary that the functions $f$ and $g$ are regular enough for $f_t$ and $g_t$ to be in the domains of the carré du champ operators. 
In the remainder of the paper, we shall only be  concerned with Brownian diffusion processes and random walks  for which the class of regular functions will be specified, see Sections \ref{sec-diff} and \ref{sec-RW}. 

Going back to \eqref{eq-23}, we see that the processes $f_t(X_t)$ and $g_t(X_t)$ are respectively backward and forward  local $R$-martingales. In terms of stochastic derivatives, this is equivalent to 
\begin{equation}\label{eq-27}
\left\{
\begin{array}{ll}
(-\partial_t +\Lb ) f(t,z)=0, &0<t\le 1,\\
f_0,	&t=0,
\end{array}\right.
\qquad
\left\{
\begin{array}{ll}
(\partial_t +\Lf ) g(t,z)=0, &0\le t<1,\\
g_1,	&t=1,
\end{array}\right.
\end{equation}
where these identities hold $ \mu_t$-almost everywhere for almost all $t.$

\begin{assumptions}\label{ass-03}
We assume that the kernels $\rf, \rb$ that appear in \eqref{eq-23} are (a) positivity improving and (b) regularizing:
\begin{enumerate}[(a)]
\item
For all $(t,z)\in(0,1)\times\XX$, $\rf(t,z;1,\cdot)\gg m$ and $\rb(0,\cdot;t,z)\gg m.$
\item
The functions $(f_0,g_1)$ and the kernels $\rf, \rb$  are such that 
$(t,z)\mapsto f(t,z), g(t,z)$ are twice $t$-differentiable \emph{classical solutions} of the  parabolic PDEs \eqref{eq-27}.
\end{enumerate}
\end{assumptions}

Assumption (a) implies that for any $0< t< 1,$ $f_t$ and $g_t$ are positive everywhere. In particular,  we see with \eqref{eq-24} that $ \mu_t\sim m$. 
Assumption (b) will be made precise later in specific settings. It will be used when computing time derivatives of $t\mapsto \mu_t.$

Mainly because we are going to study the relative entropy $H(\mu_t|m)=\IX\log(f_tg_t)\,d\mu_t,$ it is sometimes worthwhile to express $\Af$ and $\Ab$ in terms of the logarithms of $f$ and $g:$ 
\begin{equation}\label{eq-30}
\left\{\begin{array}{ll}
\varphi_t(z):=\log f_t(z)=\log E_R(f_0(X_0)\mid X_t=z), &(t,z)\in (0,1]\times\XX\\
\psi_t(z):=\log g_t(z)=\log E_R(g_1(X_1)\mid X_t=z),&(t,z)\in [0,1)\times\XX.
\end{array}\right.
\end{equation}
In analogy with the Kantorovich potentials which appear in the optimal transport theory, we call $\varphi$ and $\psi$ the \emph{Schrödinger potentials}. Under Assumption \ref{ass-03}-(b), they are classical solutions of the ``second order'' Hamilton-Jacobi-Bellman (HJB) equations 
\begin{equation}\label{eqd-02}
\left\{
\begin{array}{ll}
(-\partial_t +\Bb ) \varphi=0, &0<t\le 1,\\
\varphi_0=\log f_0,	&t=0,
\end{array}\right.
\ 
\left\{
\begin{array}{ll}
(\partial_t +\Bf ) \psi=0, &0\le t<1,\\
\psi_1=\log g_1,	&t=1,
\end{array}\right.
\end{equation}
where the non-linear operators $\Bf $ and $\Bb $ are defined by
\begin{equation*}
\left\{\begin{array}{lcl}
\Bf  u&:= &e ^{-u}\Lf  e^u,\\ \Bb  v&:=& e ^{-v}\Lb  e^v,
\end{array}\right.
\end{equation*}
for any functions $u,v$ such that $e^u\in\dom\Lf $  and  $e^v\in\dom\Lb .$
Let us introduce the notation 
\begin{equation*}
\left\{ \begin{array}{lcl}
\Af _{ \theta}&:=&\Lf + e ^{ -\theta}\Gf (e ^ \theta,\cdot)\\
\Ab _{  \theta}&:=&\Lb + e ^{ -\theta}\Gb (e ^ \theta,\cdot)
\end{array}\right.
\end{equation*}
which permits us to rewrite \eqref{eqd-05} as  $\Af_t=\Af _{ \psi_t}$ and $\Ab_t=\Ab _{ \varphi_t},$ emphasizing their dependence on the Schrödinger potentials.

\subsubsection*{Forward-backward systems} The complete dynamics of $[ \mu_0,\mu_1]^R$ is described by a forward-backward system.   We see that
\begin{equation}\label{eq-08}
\left\{ \begin{array}{ll}
(-\partial_t+\Af ^*_{ \psi_t}) \mu =0, & 0<t\le1,\\
\mu_0, & t=0,
\end{array}\right.
\quad\textrm{ where }\quad
\left\{
\begin{array}{ll}
(\partial_t +\Bf ) \psi=0, &0\le t<1,\\
\psi_1=\log g_1,	&t=1,
\end{array}\right.
\end{equation}
and $A ^*$ is the algebraic adjoint of $A:$ $ \IX Au\,d\mu  =: \left\langle u,A^* \mu \right\rangle .$ The evolution equation for $ \mu$ is understood in the weak sense, in duality with respect to a large enough  class of regular functions $u$. Similarly,
\begin{equation}\label{eq-09}
\left\{ \begin{array}{ll}
(\partial_t+\Ab ^*_{  \varphi_t}) \mu =0, & 0\le t<1,\\
\mu_1, & t=1,
\end{array}\right.
\quad\textrm{ where }\quad
\left\{
\begin{array}{ll}
(-\partial_t +\Bb ) \varphi=0, &0< t\le 1,\\
\varphi_0=\log f_0,	&t=0.
\end{array}\right.
\end{equation}
It appears that the boundary data for the systems \eqref{eq-08} and \eqref{eq-09} are $( \mu_0, g_1)$ and $(f_0, \mu_1)$.

\section{Second  derivative of the entropy}\label{sec-second}

The aim of this section is to provide basic formulas to study the convexity of the entropy 
$$
t\in\ii\mapsto H(t):=H(\mu_t|m)\in [0,\infty)
$$ 
as a function of time, along the entropic interpolation $[\mu_0,\mu_1]^R$ associated with the $(f,g)$-transform $P$ of $R$ defined by \eqref{eq-15}. The interpolation $[\mu_0,\mu_1]^R$ is specified by its endpoint data $(\mu_0,\mu_1)$ defined by \eqref{eq-12b}. We have also seen at \eqref{eq-08} and \eqref{eq-09} that it is specified by either $(\mu_0,g_1)$ or $(f_0,\mu_1)$.

\subsection*{Basic rules of calculus and notation}

 We are going to use the subsequent  rules of calculus and notation where we drop the subscript $t$ as often as possible. Assumption \ref{ass-03} is used in a significant way: it allows to work with everywhere defined derivatives. 
\begin{itemize}
\item
$\rho_t=f_tg_t\,m,$\quad $\varphi:=\log f,$\quad $\psi:=\log g.$
\\
The first identity is Theorem \ref{resd-01}. The others are notation which are well defined everywhere under Assumption \ref{ass-03}.

\item
$\dot u:=\partial_t u,\quad \dot \mu:=\partial_t \mu,\quad \langle u,\eta \rangle :=\IX u\,d \eta.$ 
\\
These are simplifying notation.

\item
The first line of the following equalities is \eqref{eqd-05} and  the other ones are definitions:
\\
$\left\{\begin{array}{lcl}
\Af u&=&\Lf u+\Gf(g,u)/g\\ \Bf u&:=& e ^{-u}\Lf e^u \\
\Cf u&:=&\Bf u-\Lf u
\end{array}\right.,$ \qquad
$\left\{\begin{array}{lcl}
\Ab u&=&\Lb u+\Gb(f,u)/f\\ \Bb u&:=&e ^{-u}\Lb e^u\\
\Cb u&:=&\Bb u-\Lb u.
\end{array}\right.$

\item
$\langle u,\dot \mu\rangle=\langle \Af u,\mu \rangle =\langle -\Ab u,\mu \rangle .$
\\
These identities hold since $\mu$ is the time-marginal flow of the Markov law $P$ whose forward and backward derivatives  are $\Af$ and $\Ab.$
\item
A short way for writing \eqref{eq-27} and \eqref{eqd-02} is:
\\
$\left\{\begin{array}{lcl}
\dot g&=&-\Lf g\\ \dot \psi&=&-\Bf \psi
\end{array}\right.,$ 
\qquad
$\left\{\begin{array}{lcl}
\dot f&=&\Lb f\\ \dot \varphi&=&\Bb \varphi.
\end{array}\right.$
\end{itemize}

\subsection*{Entropy production}

By Assumption \ref{ass-03}, the evolution PDEs \eqref{eq-27} and \eqref{eqd-02} are defined in the classical sense and the functions $\If$ and $\Ib$ below are well defined.

\begin{definition}[Forward and backward entropy production]
For each $0\le t\le 1,$ we define respectively the forward and backward entropy production at time $t$ along the interpolation $[\mu_0,\mu_1]^R$ by
\begin{equation*}
\left\{\begin{array}{lcl}
\If(t)&:=&\IX \Tf  \psi_t \,d\mu_t\\ \\
\Ib(t)&:=&\IX \Tb  \varphi_t\, d\mu_t
\end{array}\right.
\end{equation*}
where for any regular enough function $u,$
\begin{equation}\label{eq-49}
\left\{\begin{array}{lcl}
\Tf  u&:=& e ^{-u}\Gf (e ^{u},u)-\Cf  u\\ \\
\Tb  u&:=& e ^{-u}\Gb (e ^{u},u)-\Cb  u.
\end{array}\right.
\end{equation}
\end{definition}
Calling the functions $\If$ and $\Ib$ ``entropy production'' is justified by Corollary  \ref{res-02} below.

 We shall see at Section \ref{sec-diff} that in the reversible Brownian diffusion setting where $R$ is  associated with the generator \eqref{eq-11}, we have $$\Tf u=\Tb u=\Gamma(u)/2$$ with no dependence on $t$ and as usual, ones simply writes $ \Gamma(u):= \Gamma(u,u).$

\begin{proposition}\label{resd-03}
Suppose that the Assumptions \ref{ass-02} and \ref{ass-03} hold.
The first  derivative of $t\mapsto H(t)$ is
$$
\frac{d}{dt} H(\mu_t|m)=\If(t)-\Ib(t),\qquad 0<t<1.
$$
\end{proposition}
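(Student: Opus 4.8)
The plan is to differentiate $H(t) = \IX \log(f_t g_t)\, d\mu_t$ directly, using the rules of calculus assembled just before the statement. Writing $\log \rho_t = \varphi_t + \psi_t$, I have $H(t) = \langle \varphi_t + \psi_t, \mu_t\rangle$, so by the product rule
\begin{equation*}
H'(t) = \langle \dot\varphi_t + \dot\psi_t, \mu_t\rangle + \langle \varphi_t + \psi_t, \dot\mu_t\rangle.
\end{equation*}
The key observation is that the two terms on the right should be handled with \emph{opposite} time orientations: I will evaluate $\langle \varphi_t + \psi_t, \dot\mu_t\rangle$ by splitting it and using each of the two available representations $\langle u, \dot\mu\rangle = \langle \Af u, \mu\rangle = \langle -\Ab u, \mu\rangle$ in whichever direction produces cancellation with the $\dot\varphi,\dot\psi$ terms.

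First I would substitute the equations of motion $\dot\psi = -\Bf\psi$ and $\dot\varphi = \Bb\varphi$ from the final calculus bullet, giving $\langle \dot\varphi + \dot\psi, \mu\rangle = \langle \Bb\varphi - \Bf\psi, \mu\rangle$. Next I would treat the transport term by choosing the backward representation for the $\psi$ part and the forward representation for the $\varphi$ part:
\begin{equation*}
\langle \psi, \dot\mu\rangle = \langle -\Ab\psi, \mu\rangle, \qquad \langle \varphi, \dot\mu\rangle = \langle \Af\varphi, \mu\rangle,
\end{equation*}
so that $H'(t) = \langle \Bf\psi - \Ab\psi, \mu\rangle \cdot(-1)\,\text{?}$—more carefully, collecting the $\psi$-terms gives $\langle -\Bf\psi - \Ab\psi, \mu\rangle$ and the $\varphi$-terms give $\langle \Bb\varphi + \Af\varphi, \mu\rangle$. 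I would then expand $\Af\psi = \Lf\psi + \Gf(g,\psi)/g$ and $\Bf\psi = e^{-\psi}\Lf e^{\psi}$, and recall that $g = e^{\psi}$, so that $\Gf(g,\psi)/g = e^{-\psi}\Gf(e^{\psi},\psi)$ and $\Cf\psi = \Bf\psi - \Lf\psi$. Combining these should collapse the $\psi$-contribution into exactly $-[e^{-\psi}\Gf(e^{\psi},\psi) - \Cf\psi] = -\Tf\psi$, and symmetrically the $\varphi$-contribution into $+\Tb\varphi$, matching the definition \eqref{eq-49}. Integrating against $\mu_t$ then yields $H'(t) = \Ib(t) - \If(t)$; I would double-check the global sign, since the claimed identity is $\If - \Ib$, tracing it back to the orientation conventions in the $\langle u,\dot\mu\rangle$ rule.

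The main obstacle is \textbf{justifying the differentiation under the integral and the use of everywhere-defined classical derivatives}: the product rule applied to $\langle \varphi_t + \psi_t, \mu_t\rangle$ requires that $t\mapsto \varphi_t, \psi_t$ and $t\mapsto \mu_t$ be differentiable with controlled integrability, and that the weak formulations \eqref{eq-08}--\eqref{eq-09} apply with $u = \varphi_t$ or $\psi_t$ as admissible test functions. This is precisely what Assumption \ref{ass-03} is designed to supply—positivity of $f_t, g_t$ on $(0,1)$ keeps $\varphi_t, \psi_t$ finite and smooth, and part (b) guarantees classical solvability of the HJB equations—so I expect the rigorous work to consist in verifying that $\varphi_t, \psi_t$ lie in the regularity class for which $\langle u, \dot\mu\rangle = \langle \Af u,\mu\rangle$ holds, rather than in the algebra itself. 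The algebraic cancellation is essentially forced once the substitution $g = e^{\psi}$, $f = e^{\varphi}$ is made; the delicate point is that it is \emph{legitimate} to differentiate termwise at interior times $0 < t < 1$, which is why the statement is restricted to the open interval.
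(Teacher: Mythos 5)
Your overall strategy is exactly the paper's: write $H(t)=\langle \varphi_t+\psi_t,\mu_t\rangle$, differentiate by the product rule, substitute $\dot\psi=-\Bf\psi$, $\dot\varphi=\Bb\varphi$, and use the two representations $\langle u,\dot\mu\rangle=\langle \Af u,\mu\rangle=\langle-\Ab u,\mu\rangle$ selectively on the two potentials. But you make the orientation choice backwards, and this is not a cosmetic issue. The cancellation that produces $\Tf$ and $\Tb$ is \emph{pointwise} and only works one way: since $\Af=\Af_{\psi}=\Lf+e^{-\psi}\Gf(e^{\psi},\cdot)$ is built from $g=e^{\psi}$, one has $(\Af-\Bf)\psi=e^{-\psi}\Gf(e^{\psi},\psi)-\Cf\psi=\Tf\psi$, and symmetrically $(\Ab-\Bb)\varphi=\Tb\varphi$. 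So the correct pairing is the \emph{forward} representation for $\psi$ and the \emph{backward} one for $\varphi$, giving $H'=\langle \Bb\varphi-\Bf\psi,\mu\rangle+\langle-\Ab\varphi+\Af\psi,\mu\rangle=\langle\Tf\psi-\Tb\varphi,\mu\rangle=\If-\Ib$. With your pairing, the $\psi$-block is $-\Bf\psi-\Ab\psi$, which contains $\Lb\psi$ and $e^{-\varphi}\Gb(e^{\varphi},\psi)$ and does \emph{not} collapse to $-\Tf\psi$; your subsequent expansion of $\Af\psi$ is inconsistent with the decomposition you just wrote, which contains $\Ab\psi$, not $\Af\psi$.

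The sign discrepancy you flag at the end is the symptom of this. After integration against $\mu_t$ your decomposition is of course still correct (both representations of $\langle u,\dot\mu\rangle$ agree as integrals), but then $\langle-\Bf\psi-\Ab\psi,\mu\rangle=\langle(\Af-\Bf)\psi,\mu\rangle=+\If(t)$ and $\langle\Bb\varphi+\Af\varphi,\mu\rangle=\langle-(\Ab-\Bb)\varphi,\mu\rangle=-\Ib(t)$ — the opposite signs of what you assert — so the correct total is $\If-\Ib$ and your stated answer $\Ib-\If$ is wrong. Saying you ``would double-check the global sign'' does not close this: the fix is to swap the two representations, after which the pointwise identities $(\Af-\Bf)\psi=\Tf\psi$ and $(\Ab-\Bb)\varphi=\Tb\varphi$ hold and the proof is complete. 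Your remarks on regularity (Assumption \ref{ass-03} supplying classical, everywhere-defined derivatives and positivity of $f_t,g_t$ on the open interval) are consistent with the level of rigor the paper itself adopts here.
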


\begin{proof}
With Theorem \ref{resd-01} and the definition of the relative entropy, we immediately see that $H(t)=\left\langle \log \rho,\mu \right\rangle= \langle \varphi+\psi,\mu\rangle .$ It follows with our basic  rules of calculus that
\begin{equation*}
H'(t)=\left\langle \dot \varphi+\dot \psi,\mu \right\rangle + \left\langle \varphi+\psi,\dot \mu \right\rangle =\langle \Bb \varphi-\Bf \psi,\mu\rangle+ \left\langle -\Ab \varphi+\Af \psi,\mu \right\rangle.
\end{equation*}
The point here is to apply the forward operators $\Af=\Af _{ \psi}, \Bf$  to $\psi$ and the backward operators $\Ab=\Ab _{ \varphi},\Bb$ to $\varphi.$
This is the desired result since $(\Af-\Bf)\psi=\Tf \psi$ and $(\Ab-\Bb)\varphi=\Tb \varphi.$
\end{proof}

Remark that $\left\langle \dot \varphi+\dot \psi,\mu \right\rangle= \left\langle  \partial_t \log \rho, \mu\right\rangle = \left\langle \dot \rho/ \rho, \mu \right\rangle = \left\langle \dot \rho,m \right\rangle =(d/dt) \left\langle \rho,m \right\rangle =0.$ Hence,
$$\left\langle \dot \varphi+\dot \psi,\mu \right\rangle=\langle \Bb \varphi-\Bf \psi,\mu\rangle=0.$$

As an immediate consequence of this Proposition \ref{resd-03}, we obtain the following Corollary \ref{res-02} about the dynamics of heat flows. 

\begin{definitions}[Heat flows] \label{def-02}\ 
\begin{enumerate}[(a)]
\item
We call \emph{forward  heat flow} the time marginal flow of the $(f,g)$-transform  described at Remark \ref{rem-01}-(a). It corresponds to $f_0= \rho_0$  and $g_1=1,$ i.e.\  to $P= \rho_0(X_0)\, R.$
\item
We call \emph{backward  heat flow} the time marginal flow of the $(f,g)$-transform  described at Remark \ref{rem-01}-(b). It corresponds to $f_0= 1$  and $g_1= \rho_1,$ i.e.\ to $P= \rho_1(X_1)\, R.$
\end{enumerate}
\end{definitions}

\begin{corollary}\label{res-02}\ 
\begin{enumerate}[(a)]
\item
Along any forward heat flow $ (\mu_t)_{ 0\le t\le 1}$, we have: $\frac{d}{dt} H(\mu_t|m)=-\Ib(t),$ $0<t<1.$
\item
Along any backward heat flow $( \mu_t)_{ 0\le t\le 1}$, we have: $\frac{d}{dt} H(\mu_t|m)=\If(t),$ $ 0<t<1.$
\end{enumerate}
\end{corollary}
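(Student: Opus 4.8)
The plan is to derive Corollary \ref{res-02} as a direct specialization of Proposition \ref{resd-03}, exploiting the structure of the two heat flows. Recall that Proposition \ref{resd-03} gives
\[
\frac{d}{dt}H(\mu_t|m)=\If(t)-\Ib(t),\qquad 0<t<1,
\]
so the entire task reduces to showing that one of the two entropy-production terms vanishes identically in each case. My first step is therefore to unwind what the choices of boundary data in Definition \ref{def-02} do to the functions $f$ and $g$ defined at \eqref{eq-23}.

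For part (a), the forward heat flow corresponds to $g_1=1$. I would observe that then $g_t(z)=E_R(g_1(X_1)\mid X_t=z)=E_R(1\mid X_t=z)=1$ for all $(t,z)$, hence $\psi_t=\log g_t\equiv 0$ identically. The key observation is that $\Tf$ applied to the zero function vanishes: since $\Gf(e^{0},0)=\Gf(1,0)=0$ (the carr\'e du champ is zero whenever one argument is constant) and $\Cf 0=\Bf 0-\Lf 0=e^{0}\Lf e^{0}-\Lf 0=\Lf 1-\Lf 0=0$, we get $\Tf\psi_t=\Tf 0=0$, so $\If(t)=\IX \Tf\psi_t\,d\mu_t=0$. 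Plugging into Proposition \ref{resd-03} leaves exactly $\frac{d}{dt}H(\mu_t|m)=-\Ib(t)$. Part (b) is the symmetric statement: the backward heat flow takes $f_0=1$, whence $f_t\equiv 1$, $\varphi_t=\log f_t\equiv 0$, and the same computation gives $\Tb\varphi_t=\Tb 0=0$, so $\Ib(t)=0$ and $\frac{d}{dt}H(\mu_t|m)=\If(t)$.

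The only point demanding a little care is the verification that the boundary data $(f_0,g_1)$ of the heat flows still satisfy the standing hypotheses of Proposition \ref{resd-03}, namely Assumptions \ref{ass-02} and \ref{ass-03}, so that its conclusion is legitimately available; in particular one needs $f_0=\rho_0,\, g_1=1$ (and symmetrically $f_0=1,\, g_1=\rho_1$) to lie in $L^2(m)$ with the logarithmic integrability \eqref{eq-13}, and one needs the regularity of Assumption \ref{ass-03}-(b) so that the derivative formula holds classically. I do not expect a genuine obstacle here, but this is where the substance of the argument lives: once $g_1\equiv 1$ is admissible, the evolution PDE for $g$ in \eqref{eq-27} reads $(\partial_t+\Lf)g=0$ with terminal datum $g_1=1$, whose unique solution is the constant $g\equiv 1$ by $m$-stationarity of $R$, confirming $\psi\equiv 0$ independently of the heuristic conditioning argument. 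Thus the corollary follows, and the forward (resp. backward) heat flow is seen to have entropy derivative equal to $-\Ib$ (resp. $+\If$), justifying a posteriori the terminology ``entropy production'' for $\If$ and $\Ib$.
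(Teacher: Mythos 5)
Your proposal is correct and follows exactly the route the paper intends: the corollary is stated as an immediate consequence of Proposition \ref{resd-03}, and the mechanism you identify ($g_1=1\Rightarrow g_t\equiv 1\Rightarrow\psi_t\equiv 0\Rightarrow\Tf\psi_t=0\Rightarrow\If=0$, and symmetrically for $\varphi$) is precisely the computation the paper itself spells out later in the proof of Theorem \ref{resd-07}. Your extra check that $\Tf 0=0$ and your remark on verifying Assumptions \ref{ass-02} and \ref{ass-03} for the boundary data are sound and consistent with the paper's treatment.
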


\subsection*{Second derivative}
The computations in this subsection are informal and the underlying regularity hypotheses  are kept fuzzy. We  assume that the Markov measure $R$ is \emph{nice enough} for the Schrödinger potentials $\varphi$ and $\psi$ to be in the domains of the compositions of the operators $L, \Gamma, A,B $ and $C$ which are going to appear below. To keep formulas into a reasonable size, we have assumed that  $R$ is  $m$-stationary.
 The informal results that are stated below at Claims \ref{resd-02} and \ref{resd-04}  will turn later at Sections \ref{sec-diff} and \ref{sec-RW}  into formal statements in specific settings.
  We introduce 
\begin{equation}\label{eqd-10}
\left\{
\begin{array}{lcl}
\Tf _{2} u&:=&\Lf  \Tf  u
	+e ^{-u}\Gf \left(e^u,\Tf  u\right)
	+ e ^{-u}\Gf (e^u,u)\Bf  u -e ^{-u}\Gf (e^u\Bf  u,u),\\
\Tb _{2} u&:=&\Lb  \Tb  u
	+e ^{-u}\Gb \left(e^u,\Tb  u\right)+ e ^{-u}\Gb (e^u,u)\Bb  u-e ^{-u}\Gb (e^u\Bb  u,u),
\end{array}
\right.
\end{equation}
provided that the function $u$ is such that these expressions are well defined. 
It will be shown at Section \ref{sec-diff}, see \eqref{eq-38}, that in the  Brownian diffusion setting where $R$ is  associated with the generators $ \left\{ \begin{array}{lcl}
\Lf&=&\bbf\cdot\nabla+ \Delta/2\\
\Lb&=&\bbb\cdot\nabla+ \Delta/2
\end{array}\right., $  we have $ \left\{ \begin{array}{lcl}
\Tf_2&=&\Gf_2/2\\
\Tb_2&=&\Gb_2/2
\end{array}\right. ,$ where
\begin{equation}\label{eqd-07}
\left\{
\begin{array}{lcl}
\Gf_2(u)&:=&\Lf \Gf(u)-2\Gf(\Lf u,u),\\
\Gb_2(u)&:=&\Lb\Gb(u)-2\Gb(\Lb u,u),
\end{array}•\right.
\end{equation}
are   the forward and backward iterated carré du champ operators.

\begin{remark}
We keep the notation of Remark \ref{rem-02}.
In the standard Bakry-\'Emery setting, the iterated carré du champ of   $\widetilde L= \Delta-\nabla V\cdot\nabla$ is   $\widetilde\Gamma_2(u)=[\widetilde L(\widetilde\Gamma(u))- 2\widetilde\Gamma (u,\widetilde Lu)]/2.$ In the present paper, we  consider $L=\widetilde L/2$ instead of $\widetilde L$ and we have already checked at Remark \ref{rem-02} that $ \widetilde\Gamma= \Gamma.$ Consequently,  $ \Gamma_2= \widetilde\Gamma_2.$
\end{remark}

Introducing these definitions is justified by the following claim.

\begin{claim}[Informal result]\label{resd-02}
Assume that $R$ is  $m$-stationary. Then,
$$\frac{d^2}{dt^2}H(\mu_t|m)=\left\langle  \Tb _2 \varphi_t+\Tf _2 \psi_t,\mu_t\right\rangle,\quad \forall 0<t<1. $$
\end{claim}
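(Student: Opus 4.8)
The plan is to differentiate once more the first-derivative formula of Proposition \ref{resd-03}, written as $H'(t)=\langle\Tf\psi,\mu\rangle-\langle\Tb\varphi,\mu\rangle$, keeping the same forward/backward bookkeeping that made the proof of that proposition work. For a time-dependent function I would use the product rule $\frac{d}{dt}\langle u,\mu\rangle=\langle\dot u,\mu\rangle+\langle u,\dot\mu\rangle$ together with the two representations $\langle u,\dot\mu\rangle=\langle\Af u,\mu\rangle=-\langle\Ab u,\mu\rangle$ from the basic rules of calculus. The decisive choice is to differentiate the forward object $\Tf\psi$ with the forward identity and the backward object $\Tb\varphi$ with the backward identity, which gives
\begin{equation*}
H''(t)=\big\langle \partial_t(\Tf\psi)+\Af(\Tf\psi),\mu\big\rangle+\big\langle \Ab(\Tb\varphi)-\partial_t(\Tb\varphi),\mu\big\rangle .
\end{equation*}

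It then suffices to establish the two pointwise identities $\partial_t(\Tf\psi)+\Af(\Tf\psi)=\Tf_2\psi$ and $\Ab(\Tb\varphi)-\partial_t(\Tb\varphi)=\Tb_2\varphi$; integrating them against $\mu_t$ yields the claim. For the forward one, recall $\Af=\Af_\psi=\Lf+e^{-\psi}\Gf(e^\psi,\cdot)$, so that $\Af(\Tf\psi)=\Lf(\Tf\psi)+e^{-\psi}\Gf(e^\psi,\Tf\psi)$ reproduces \emph{exactly} the first two terms of the definition \eqref{eqd-10} of $\Tf_2\psi$. What remains is to show that the time derivative contributes precisely the last two terms, namely
\begin{equation*}
\partial_t(\Tf\psi)=e^{-\psi}\Gf(e^\psi,\psi)\,\Bf\psi-e^{-\psi}\Gf(e^\psi\Bf\psi,\psi).
\end{equation*}

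To obtain this I would substitute $\Tf\psi=e^{-\psi}\Gf(e^\psi,\psi)-\Cf\psi$ and differentiate, using $\dot\psi=-\Bf\psi$ (the short form of \eqref{eq-27}--\eqref{eqd-02}) and the bilinearity of $\Gf$. Writing $\chi:=\Bf\psi$, the cancellations are all governed by the Leibniz identity $\Lf(uv)=u\Lf v+v\Lf u+\Gf(u,v)$, which specialized to $u=e^\psi,\ v=\chi$ reads $e^{-\psi}\Lf(e^\psi\chi)=\Lf\chi+\chi^2+e^{-\psi}\Gf(e^\psi,\chi)$; feeding this into $\partial_t\Cf\psi$ annihilates all the $\Lf\chi$, $\chi^2$ and $\Gf(e^\psi,\chi)$ contributions and leaves only the two surviving terms displayed above. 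The backward identity follows verbatim after exchanging forward for backward objects and using $\dot\varphi=\Bb\varphi$; the opposite sign in this last relation is exactly why the two pieces enter $H''$ with the same sign although they entered $H'$ with opposite signs.

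The main obstacle is not the algebra but its justification: $\Tf$ and $\Tb$ are nonlinear, so differentiating $\Tf\psi_t$ in time amounts to applying their linearizations to $\dot\psi_t$, which presupposes that $\psi$ and $\varphi$ lie in the domains of all the iterated compositions of $\Lf,\Gf,\Bf,\Cf$ and commute freely with $\partial_t$ --- precisely the regularity that this section deliberately keeps fuzzy. I would therefore present the two identities $\partial_t(\Tf\psi)+\Af(\Tf\psi)=\Tf_2\psi$ and $\Ab(\Tb\varphi)-\partial_t(\Tb\varphi)=\Tb_2\varphi$ as formal computations here, deferring their rigorous verification to the concrete Brownian-diffusion and random-walk settings of Sections \ref{sec-diff} and \ref{sec-RW}, where the admissible function classes are specified and Assumption \ref{ass-03} can actually be checked.
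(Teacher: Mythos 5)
Your proposal is correct and follows essentially the same route as the paper: both differentiate $H'$ with the product rule, pair $\dot\mu$ with $\Af$ on the forward piece and with $-\Ab$ on the backward piece, and reduce everything to $\dot\psi=-\Bf\psi$, $\dot g=-\Lf g$, $\dot\varphi=\Bb\varphi$, $\dot f=\Lb f$ and the Leibniz rule for $\Gf,\Gb$. The only (harmless) difference is the starting decomposition --- the paper differentiates $H'=\langle \Af\psi-\Ab\varphi,\mu\rangle$, so the last two terms of $\Tf_2\psi$ arise from the operator derivative $\dot\Af\psi$, whereas you differentiate $H'=\langle \Tf\psi-\Tb\varphi,\mu\rangle$ and obtain them from $\partial_t(\Tf\psi)$ --- and your two pointwise identities $\partial_t(\Tf\psi)+\Af(\Tf\psi)=\Tf_2\psi$ and $\Ab(\Tb\varphi)-\partial_t(\Tb\varphi)=\Tb_2\varphi$ do check out against \eqref{eqd-10}.
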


\begin{proof}
Starting from $H(t)= \left\langle \rho\log \rho,m \right\rangle ,$ we obtain $H'(t)=\left\langle 1+\log \rho,\dot \mu \right\rangle=\left\langle \log \rho,\dot \mu \right\rangle =\left\langle -\Ab \varphi+\Af \psi,\mu \right\rangle $ and
 $H''(t)=\frac{d}{dt}\left\langle -\Ab \varphi,\mu \right\rangle +\frac{d}{dt}\left\langle \Af \psi,\mu \right\rangle .$ We have 
\begin{eqnarray*}
\frac{d}{dt}\left\langle -\Ab \varphi,\mu \right\rangle
&=&\left\langle \Ab^2 \varphi -\Ab\dot \varphi-\dot\Ab \varphi,\mu\right\rangle 
=\left\langle \Ab(\Ab-\Bb)\varphi-\left(\frac{\Gb(\dot f,\cdot)}{f}-\frac{\dot f}{f^2}\Gb(f,\cdot)\right)\varphi,\mu \right\rangle \\
&=&\left\langle \Ab(\Ab-\Bb)\varphi-\left(\frac{\Gb(\Lb f,\cdot)}{f}-\frac{\Lb f}{f^2}\Gb(f,\cdot)\right)\varphi,\mu \right\rangle \\
&=&\left\langle \Ab\left(\frac{\Gb(f,\cdot)}{f}-\Cb\right)\varphi-\left(\frac{\Gb(\Lb f,\cdot)}{f}-\Bb \varphi\frac{\Gb(f,\cdot)}{f}\right)\varphi,\mu \right\rangle
=\left\langle \Tb_2 \varphi,\mu \right\rangle .
\end{eqnarray*}
Similarly, we obtain
\begin{eqnarray}\label{eqd-09}
\frac{d}{dt}\left\langle \Af \psi,\mu \right\rangle
&=&\left\langle \Af^2 \psi +\Af\dot \psi+\dot\Af \psi,\mu\right\rangle 
=\left\langle \Af(\Af-\Bf)\psi+\left(\frac{\Gf(\dot g,\cdot)}{g}-\frac{\dot g}{g^2}\Gf(g,\cdot)\right)\psi,\mu \right\rangle \nonumber\\
&=&\left\langle \Af(\Af-\Bf)\psi-\left(\frac{\Gf(\Lf g,\cdot)}{g}-\frac{\Lf g}{g^2}\Gf(g,\cdot)\right)\psi,\mu \right\rangle 
=\left\langle \Tf_2 \psi,\mu \right\rangle ,
\end{eqnarray}
which completes the proof of the claim.
\end{proof}

Gathering Proposition \ref{resd-03} and Claim \ref{resd-02}, we obtain the following
\begin{claim}[Informal result]\label{resd-04} 
When $R$ is $m$-stationary, for all $0<t<1,$
\begin{equation*}
\left\{ \begin{array}{lcc}
\displaystyle{
\frac{d}{dt}H(\mu_t|m)}&=& \left\langle \Tf  \psi_t -\Tb  \varphi_t,\mu_t\right\rangle, \\ \\
\displaystyle{
\frac{d^2}{dt^2}H(\mu_t|m)}&=& \left\langle \Tf _{2} \psi_t+\Tb _{2} \varphi_t,\mu_t \right\rangle .
\end{array}\right.
\end{equation*}
\end{claim}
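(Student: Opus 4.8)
The plan is to assemble the claim directly from the two results that immediately precede it, Proposition~\ref{resd-03} and Claim~\ref{resd-02}; once these are granted, no fresh computation is needed beyond unfolding the definition of the entropy productions and using linearity of the pairing $u\mapsto\langle u,\mu_t\rangle$.

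For the first-order identity I would start from Proposition~\ref{resd-03}, which gives $\frac{d}{dt}H(\mu_t|m)=\If(t)-\Ib(t)$ on $0<t<1$. By the very definition of the forward and backward entropy productions one has $\If(t)=\langle\Tf\psi_t,\mu_t\rangle$ and $\Ib(t)=\langle\Tb\varphi_t,\mu_t\rangle$, so linearity of $\langle\cdot,\mu_t\rangle$ collapses the difference into the single bracket $\langle\Tf\psi_t-\Tb\varphi_t,\mu_t\rangle$; this is the announced formula. For the second-order identity there is nothing left to do: it is precisely the conclusion of Claim~\ref{resd-02}, up to the harmless reordering of the summands $\Tf_2\psi_t$ and $\Tb_2\varphi_t$ inside the bracket.

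Thus the only genuine content lies upstream, inside the two results being gathered, and I would merely flag where it sits rather than redo it. In Proposition~\ref{resd-03} one differentiates $H(t)=\langle\varphi+\psi,\mu\rangle$ and must route the forward operators $\Af,\Bf$ onto $\psi$ and the backward operators $\Ab,\Bb$ onto $\varphi$, cancelling the purely temporal part via $\langle\dot\varphi+\dot\psi,\mu\rangle=\langle\Bb\varphi-\Bf\psi,\mu\rangle=0$ and using the identities $(\Af-\Bf)\psi=\Tf\psi$ and $(\Ab-\Bb)\varphi=\Tb\varphi$. In Claim~\ref{resd-02} the product-rule expansions of $\frac{d}{dt}\langle-\Ab\varphi,\mu\rangle$ and $\frac{d}{dt}\langle\Af\psi,\mu\rangle$, combined with $\dot f=\Lb f$ and $\dot g=-\Lf g$, are what telescope into $\Tb_2\varphi$ and $\Tf_2\psi$. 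The main obstacle is therefore entirely absorbed into those two computations, where all the regularity hypotheses (Assumption~\ref{ass-03} together with the $m$-stationarity and ``niceness'' of $R$) are spent; the final assembly itself is immediate.
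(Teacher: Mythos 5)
Your proposal is correct and matches the paper exactly: the claim is stated immediately after the sentence ``Gathering Proposition~\ref{resd-03} and Claim~\ref{resd-02}, we obtain the following,'' so the intended proof is precisely the assembly you describe, with the first line coming from Proposition~\ref{resd-03} via the definitions of $\If$ and $\Ib$ and the second line being Claim~\ref{resd-02} verbatim. You also correctly locate where the real work (and the regularity hypotheses) is spent, namely in those two upstream results.
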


\section{Brownian diffusion process}\label{sec-diff}

As a first step, we compute informally the operators $ \Theta$ and $ \Theta_2$ associated with the Brownian diffusion process on $\XX=\Rn$ whose  forward and backward generators are given for all $0\le t\le1$ by
\begin{equation}\label{eq-28}
\Lf =\bbf \cdot\nabla + \Delta/2,\qquad \Lb =\bbb \cdot\nabla+ \Delta/2.
\end{equation}
Here, $z\mapsto\bbf(z),\bbb(z)\in\Rn$ are the forward and backward drift vector fields. The term ``informally''  means that we suppose that $\bbf$ and $\bbb$ satisfy some unstated growth and regularity properties which ensure the existence of $R$ and also that Assumptions \ref{ass-03} are satisfied.
\\
Then, we consider reversible Brownian diffusion processes on a compact manifold.

\subsection*{Dynamics of the entropic interpolations}

The associated nonlinear operators  are
$\Bf u=\Delta u/2+\bbf \cdot\nabla u+|\nabla u|^2/2,$ $\Bb u=\Delta u/2+\bbb \cdot\nabla u+|\nabla u|^2/2$  and $\Gf (u,v)=\Gb (u,v)=\nabla u\cdot\nabla v$ for any $t$ and $u,v\in \mathcal{C}^2(\Rn).$
 The expressions 
$$
\left\{ \begin{array}{lcl}
\Af_t&=&\Delta/2+(\bbf +\nabla \psi_t)\cdot\nabla,\\ \Ab_t&=&\Delta/2+(\bbb +\nabla \varphi_t)\cdot\nabla,
\end{array}\right.
$$
of the forward and backward derivatives tell us that the density $ \mu_t(z):= d \mu_t/dz$ solves the following forward-backward system of parabolic PDEs
\begin{equation*}	
\left\{
\begin{array}{ll}
(\partial_t-\Delta/2) \mu_t(z)+\nabla\cdot(\mu_t(\bbf + \nabla \psi_t))(z)=0, &(t,z)\in (0,1]\times\XX\\
\mu_0,	&t=0,
\end{array}\right.
\end{equation*} 
where $\psi$ solves the HJB equation
\begin{equation}\label{eq-18}	
\left\{
\begin{array}{ll}
(\partial_t +\Delta/2+\bbf \cdot\nabla) \psi_t(z)+|\nabla \psi_t(z)|^2/2=0, &(t,z)\in [0,1)\times\XX\\
\psi_1=\log g_1,	&t=1.
\end{array}\right.
\end{equation}
In the reversed sense of time, we obtain
\begin{equation*}	
\left\{
\begin{array}{ll}
(-\partial_t-\Delta/2) \mu_t(z)+\nabla\cdot(\mu_t(\bbb + \nabla\varphi_t))(z)=0, &(t,z)\in [0,1)\times\XX\\
\mu_1,	&t=1,
\end{array}\right.
\end{equation*}
where $\varphi$  solves the HJB equaltion
\begin{equation}\label{eq-18b}	
\left\{
\begin{array}{ll}
(-\partial_t +\Delta/2+\bbb \cdot\nabla) \varphi_t(z)+|\nabla \varphi_t(z)|^2/2=0, &(t,z)\in (0,1]\times\XX\\
\varphi_0=\log f_0,	&t=0.
\end{array}\right.
\end{equation} 
We put $ \log g_1=- \infty$ on the set where $g_1$ vanishes and the boundary condition at time $t=1$ must be understood as $\lim _{ t\uparrow 1} \psi_t= \log g_1.$ Similarly, we have also $\lim _{ t\downarrow 0} \varphi_t= \log f_0$ in $[- \infty,\infty).$

Because of the stochastic representation formulas \eqref{eq-23}, the functions $ \varphi$ and $ \psi$ are the unique viscosity solutions of the above Hamilton-Jacobi-Bellman equations, see \cite[Thm.\,II.5.1]{FS93}. The existence of these solutions is ensured by the Assumptions  \ref{ass-02}.

\begin{remarks}\label{rem-04}\ \begin{enumerate}[(a)]
\item
In  general, $ \varphi$ and $ \psi$  might not  be classical solutions of their respective HJB equations, the gradients $\nabla \psi$ and $\nabla \varphi$ are not defined in the usual sense. One has to consider the notion of  $P$-extended  gradient: $\widetilde\nabla^P,$ that is introduced in \cite{Leo11b}. The complete description of the Markov dynamics of $P$ is a special case of  \cite[Thm.\,5.4]{Leo11b}: the forward and backward drift vector fields of the canonical process under $P$ are respectively  $\bbf+\widetilde\nabla^P \psi_t$ and $\bbb+\widetilde\nabla^P\varphi_t.$

\item
Suppose that the stationary measure $m$ associated with $R$ is equivalent to the Lebesgue measure. Then, the forward and backward drift fields $\bbf$ and $\bbb$ are related to $m$ as follows.
Particularizing the evolution equations with $ \mu_t=m$ and $\nabla \psi=\nabla  \varphi=0,$ we see that the requirement that $R$ is $m$-stationary implies that
\begin{equation}\label{eq-29}
\left\{ \begin{array}{lcl}
\nabla\cdot(m\, \{v ^{ \mathrm{os}}-\nabla \log \sqrt m\})&=&0\\
\nabla\cdot (m\, v ^{ \mathrm{cu}})&=&0
\end{array}\right.
\end{equation}
for all $t$, where $m(x):=dm/dx$, these identities  are considered in the weak sense and 
\begin{equation*}
\left\{ \begin{array}{lcl}
v ^{ \mathrm{os}}&:=&(\bbf+\bbb)/2\\
v ^{ \mathrm{cu}}&:=&(\bbf-\bbb)/2
\end{array}
\right.
\end{equation*}
are respectively the osmotic and the (forward) current velocities of $R$ that were introduced by E.~Nelson in \cite{Nel67}.
In fact, the first equation in \eqref{eq-29} is satisfied in a stronger way, since the duality formula associated to time reversal \cite{Foe86} is
\begin{equation}\label{eq-31}
v ^{ \mathrm{os}}=\nabla \log \sqrt{m}.
\end{equation} 
\end{enumerate}
\end{remarks}

An interesting situation is given by 
$	
\bbf=-\nabla V/2+b _{ \bot}
$	
where $V:\Rn\to\RR$ is $ \mathcal{C}^2$ and the drift vector field $b _{ \bot}$ satisfies
\begin{equation}\label{eq-32}
\nabla\cdot(e ^{ -V} b _{ \bot})=0.
\end{equation}
It is easily seen that  $$m=e ^{ -V} \mathrm{Leb}$$ is the stationary measure of $R.$ Moreover, with  \eqref{eq-31}, we also obtain
\begin{equation}\label{eq-34}
\left\{
\begin{array}{lcl}
\bbf&=&-\nabla V/2+b _{ \bot}\\
\bbb&=&-\nabla V/2-b _{ \bot}
\end{array}\right.,\qquad
\left\{
\begin{array}{lcl}
v ^{ \mathrm{os}}&=&-\nabla V/2\\
v ^{ \mathrm{cu}}&=& b _{ \bot}
\end{array}\right..
\end{equation}
In dimension 2, choosing $b _{ \bot}=e ^{ V}(-\partial_yU,\partial _xU)$ with $U:\RR^2\to\RR$ a $ \mathcal{C}^2$-regular function, solves \eqref{eq-32}. Regardless of the dimension, $b _{ \bot}=e^V S\nabla U$ where $S$ is a constant skew-symmetric matrix and $U:\RR^n\to\RR$ a $ \mathcal{C}^2$-regular function, is also a possible choice. In dimension 3 one can take $b _{ \bot}=e^V\ \nabla \wedge A$ where $A:\RR^3\to \RR^3$ is a $ \mathcal{C}^2$-regular vector field.

\subsection*{Computing $ \Theta$ and $ \Theta_2$}

Our aim is to compute the operators $ \Tf,\Tb,\Tf_2$ and $ \Tb$ of the Brownian diffusion process $R$  with the forward and backward derivatives   given by \eqref{eq-28} and \eqref{eq-34}: 
\begin{equation}\label{eq-46}
\left\{ \begin{array}{lcl}
\Lf&=&(-\nabla V\cdot\nabla+ \Delta)/2+b _{ \bot}\cdot\nabla,\\
\Lb&=&(-\nabla V\cdot\nabla+ \Delta)/2-b _{ \bot}\cdot\nabla,
\end{array}\right. 
\end{equation}
where $\nabla\cdot (e^V b _{ \bot})=0.$ For simplicity, we drop  the arrows and the index $t$ during the intermediate computations.

 \subsubsection*{Computation of $ \Theta$}
 We have 
$
\Gamma(u)= e ^{-u}\Gamma(e^u,u)=|\nabla u|^2
$
and  $Cu=|\nabla u|^2/2.$ Therefore,
$\Theta u=Cu=\Gamma(u)/2=|\nabla u|^2/2.
$
This gives 
\begin{equation}\label{eqd-11a}
\left\{
\begin{array}{lcl}
\Tf_t \psi_t&=&|\nabla \psi_t|^2/2,\\ 
\Tb_t \varphi_t&=&|\nabla \varphi_t|^2/2,
\end{array}
\right.
\end{equation}
and the entropy productions writes as follows, for all $0<t<1,$ 
\begin{equation}\label{eq-22}
\left\{
\begin{array}{lcl}
\If(t)&=& \displaystyle{\ud\IX |\nabla \psi_t|^2 \,d\mu_t},\\ \\
\Ib(t)&=& \displaystyle{\ud\IX |\nabla \varphi_t|^2 \,d\mu_t.}
\end{array}
\right.
\end{equation}
 Recall that $f_t,g_t>0,$ $\mu_t\ae:$ for all $0<t<1.$

\subsubsection*{Computation of $ \Theta_2$}
Since $R$ is a diffusion,  $\Gamma$ is a derivation i.e.
$	
\Gamma(uv,w)=u \Gamma(v,w)+v \Gamma(u,w),
$	
for any regular enough functions $u,v$ and $w$. In particular, the two last terms in the expression of $\Theta_2 u$ simplify:
\begin{equation*}
e ^{-u}\Gamma(e^u Bu,u)-e ^{-u}\Gamma(e^u,u)B u =\Gamma(Bu,u)=\Gamma(Lu,u)+\Gamma(Cu,u),
\end{equation*}
and we get
$ 
\Theta_2 u=L\Gamma(u)/2+\Gamma(Cu,u)-\Gamma (Lu,u)-\Gamma(Cu,u)=L \Gamma(u)/2-\Gamma(Lu,u).
$ 
This means that  
\begin{equation}\label{eq-38}
\left\{
\begin{array}{lcl}
\Tf_2&=& \Gf_2/2,\\
\Tb_2&=&\Gb_2/2.
\end{array}
\right.
\end{equation}
They are precisely half the iterated carré du champ operators $\Gf_2$ and $\Gb_2$ defined at \eqref{eqd-07}. 
The iterated carré du champ $\Gamma_2^o$ of $L^o= \Delta/2$ is\begin{equation*}
\Gamma_2^o(u)=\| \nabla^2 u\|^2_{\mathrm{HS}}=\sum _{ i,j}(\partial _{ ij}u)^2
\end{equation*}
where  $\nabla^2 u$ is the Hessian of $u$ and
 $\| \nabla^2 u\|^2_{\mathrm{HS}}= \mathrm{tr}\, ((\nabla^2u)^2)$ is its squared Hilbert-Schmidt norm. As $\Gamma_2(u)= \Gamma_2^o(u)-2\nabla b(\nabla u)$ where  $2\nabla b(\nabla u)=2\sum _{ i,j}\partial_ib_j\,\partial_iu\partial_ju=[\nabla +\nabla^*] b(\nabla u)$ with $\nabla^* b$ the adjoint of $\nabla b$,
 it follows that
 \begin{equation}\label{eq-17b}
 \left\{
 \begin{array}{lcl}
 \Gf_2(u)&=&\| \nabla^2 u\|^2_{\mathrm{HS}}+(\nabla^2V-[\nabla +\nabla^*]b _{ \bot})(\nabla u),\\
 \Gb_2(u)&=&\| \nabla^2 u\|^2_{\mathrm{HS}}+(\nabla^2V+[\nabla+\nabla^*] b _{ \bot})(\nabla u).
 \end{array}\right.
\end{equation}

\subsubsection*{Regularity problems} 

To make Claim \ref{resd-04} a rigorous statement, one needs to rely upon regularity hypotheses such as   Assumptions \ref{ass-03}. \emph{If one knows that $f_0$, $g_1,$ $\bbf$ and $\bbb$ are such that the linear parabolic equations \eqref{eq-27} admit positive $ \mathcal{C} ^{ 2,2}((0,1)\times\XX)$-regular solutions then we are done}. Verifying this is a  standard (rather difficult) problem which is solved under various hypotheses.  Rather than  giving detail about this PDE problem which can be solved by means of Malliavin calculus, we present  an example  at next subsection.

Working with classical solutions is probably too demanding. The operators $ \Theta$ and $ \Theta_2$ are functions of  $\nabla u$ and the  notion of gradient can be extended as in \cite{Leo11b} in connection with the notion of extended stochastic derivatives, see Remark \ref{rem-04}-(a). Furthermore, when working with integrals with respect to time, for instance when considering integrals of the entropy production (see Section \ref{sec-equilib}), or in situations where $H(t)$ is known to be twice differentiable almost everywhere (e.g.\ $H(t)$ is the sum of a twice differentiable function and a convex function), it would be enough to consider $dt$-almost everywhere defined extended gradients. This program is not initiated in the
present paper.

\subsection*{Reversible Brownian diffusion process on a Riemannian manifold $\XX$}
We give some detail about the standard Bakry-\'Emery setting that already appeared  at Remark  \ref{rem-02}. It corresponds to the case where $b _{ \bot}=0.$
Let us take 
\begin{equation}\label{eqd-12}
m=e ^{-V}\,\vol
\end{equation}
where $V\in \mathcal{C}^2$ satisfies $\IX e ^{-V(x)}\,\vol(dx)<\infty$.  
The $m$-reversible Brownian diffusion process $R\in\MO$ is the Markov  measure   with the initial (reversing) measure $m$ and the  semigroup generator
\begin{equation*}
L=(-\nabla V\cdot \nabla+\Delta)/2.
\end{equation*}
The reversible Brownian motion $R^o\in \MO$ corresponds to $V=0.$ 
Its stochastic derivatives are
\begin{equation}\label{eqd-04}
\Lf^o=\Lb^o=\Delta/2
\end{equation}
and $\vol$ is its reversing measure.
Since  $R^o$ is the unique solution to its own martingale problem, there is a unique $R$ which is both absolutely continuous with respect to   $R^o$ and $m$-reversible.

\begin{lemma}\label{res-03a}
In fact $R$   is specified by
\begin{equation*}
R=\exp \left(-[V(X_0)+V(X_1)]/2+\Iii \left[\Delta V(X_t)/4-|\nabla V(X_t)|^2/8\right]\,dt\right)\, R^o.
\end{equation*}
\end{lemma}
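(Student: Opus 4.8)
The plan is to compute the density $dR/dR^o$ by Girsanov's theorem applied fibrewise over the starting point, and then to recast the resulting Itô stochastic integral as a boundary term plus an ordinary time integral via Itô's formula. Since $R^o$ is only $\sigma$-finite, one cannot apply Girsanov to $R^o$ itself; instead I would disintegrate both reference measures along the initial position. Writing $P^o_x\in\PO$ for the law of the Brownian motion with generator $\Delta/2$ issued from $x$ and $P_x\in\PO$ for the law of the diffusion with generator $L=(-\nabla V\cdot\nabla+\Delta)/2$ issued from $x$, one has $R^o=\IRn P^o_x\,\vol(dx)$ because $\vol$ is the reversing, hence initial, measure of $R^o$, and $R=\IRn P_x\,m(dx)=\IRn e^{-V(x)}P_x\,\vol(dx)$ because $m=e^{-V}\vol$ is the initial measure of $R$.

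Next I would apply Girsanov's theorem to each pair $(P_x,P^o_x)$. Relative to $P^o_x$, under which the canonical process is a Brownian motion, the diffusion $P_x$ has drift $b=-\nabla V/2$, so Girsanov's formula gives $dP_x/dP^o_x=\exp(-\tfrac12\Iii\nabla V(X_t)\cdot dX_t-\tfrac18\Iii|\nabla V(X_t)|^2\,dt)$. Applying Itô's formula to $V(X_t)$ under $P^o_x$, where the canonical process has generator $\Delta/2$, yields $\Iii\nabla V(X_t)\cdot dX_t=V(X_1)-V(X_0)-\tfrac12\Iii\Delta V(X_t)\,dt$, so that after substitution the density becomes the genuine (base-point independent) path functional
$$D:=\exp\left(\frac{V(X_0)-V(X_1)}2+\Iii\big[\Delta V(X_t)/4-|\nabla V(X_t)|^2/8\big]\,dt\right).$$
Recombining the fibres, and using that $V(x)=V(X_0)$ on paths issued from $x$, gives $R=\IRn e^{-V(X_0)}D\,P^o_x\,\vol(dx)=e^{-V(X_0)}D\cdot R^o$; the factor $e^{-V(X_0)}$ merges with the $\tfrac12(V(X_0)-V(X_1))$ term of $D$ into the symmetric boundary term $-[V(X_0)+V(X_1)]/2$, which is exactly the announced expression for $dR/dR^o$.

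The main obstacle is the $\sigma$-finiteness of $R^o$: Girsanov's theorem is a statement about locally equivalent \emph{probability} measures and cannot be invoked for $R^o$ and $R$ directly, which is why the argument must be carried out at the level of the conditioned laws $P^o_x$ and $P_x$ and only afterwards integrated against the initial measures. The legitimacy of this last integration rests on the mass condition $\IRn e^{-V}\,\vol<\infty$, while the unstated growth and regularity hypotheses on $V$ are what guarantee the Novikov-type integrability needed for the Girsanov density and the well-posedness of the martingale problem. As a shortcut that bypasses Girsanov, one could instead define $\tilde R$ by the right-hand side, note that $\tilde R\ll R^o$ is immediate and that its density is invariant under the time reversal $t\mapsto 1-t$ (both $V(X_0)+V(X_1)$ and the time integral are symmetric), so that $\tilde R$ inherits reversibility from that of $R^o$; the uniqueness of an $m$-reversible measure absolutely continuous with respect to $R^o$, recalled just before the statement, would then force $\tilde R=R$.
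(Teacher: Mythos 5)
Your algebra coincides with the paper's: the Girsanov exponent for the drift $-\nabla V/2$, rewritten through It\^o's formula for $V(X_t)$ under the Brownian reference, yields exactly the symmetric density $\exp\big(-[V(X_0)+V(X_1)]/2+\int_0^1[\Delta V/4-|\nabla V|^2/8]\,dt\big)$, and your fibrewise disintegration is a harmless way of handling the infinite mass of $R^o$ (the paper simply multiplies $R^o$ by the candidate density and checks that the total mass is at most $\int e^{-V}\,d\vol<\infty$). The genuine gap is the step ``Girsanov's formula gives $dP_x/dP^o_x=\exp(\cdots)$''. That identity holds only if the exponential local martingale $Z_t=\exp\big(-\frac12\int_0^t\nabla V(X_s)\cdot dX_s-\frac18\int_0^t|\nabla V(X_s)|^2\,ds\big)$ is a \emph{true} $P^o_x$-martingale; otherwise $Z_1\,P^o_x$ is a strict sub-probability and the reassembled measure does not have initial marginal $e^{-V}\vol$. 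You delegate this to ``unstated Novikov-type integrability'', but the lemma is stated, and proved in the paper, under the sole hypotheses $V\in\mathcal{C}^2$ and $\int e^{-V}\,d\vol<\infty$; no Novikov condition is available, and supplying one would weaken the result. The entire content of the paper's proof is to \emph{derive} the martingale property from these weak hypotheses: $Z$ is a positive local, hence super-, martingale, so $\hat R:=e^{-V(X_0)}Z_1\,R^o$ has finite mass; the density, once put in the boundary-plus-time-integral form, is invariant under time reversal, and so is $R^o$, hence $\hat R$ is time-reversal invariant and $\hat R_0=\hat R_1$; this forbids any loss of mass to a cemetery point and forces $E_{R^o}[Z_1\mid X_0]=1$, i.e.\ the martingale property. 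Only afterwards is Girsanov invoked to identify the generator as $L$.

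Your proposed shortcut does not repair this. To apply the uniqueness statement you must check that $\tilde R$ is \emph{$m$-reversible}, in particular that $\tilde R_0=e^{-V}\vol$; but $\tilde R_0(dx)=e^{-V(x)}E_{P^o_x}[Z_1]\,\vol(dx)$, so this is again precisely the assertion that $Z$ is a true martingale (and you would also need to identify the Markov generator of $\tilde R$ as $L$, which is the Girsanov step). The time-reversal symmetry of the density that you observe is indeed the key ingredient, but what it buys in the paper is not reversibility per se: it is the equality of the endpoint marginals of $\hat R$, which combined with the finiteness of the mass yields conservativeness and hence the martingale property. Your argument assumes that property in both of its variants instead of proving it.
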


\begin{proof}
To see this, let us define $\hat R$ by means of  $d\hat R/dR^o:=e ^{-V(X_0)}Z_1$ where\\ $Z_t=\exp \left(\int_0^t - \frac{\nabla V}{2} (X_s)\cdot dX_s-\ud \int_0^t| \frac{\nabla V}{2}(X_s)|^2\right)$ is a local positive forward $R^o$-martingale. Since $\IRn e ^{-V(z)}\,dz<\infty$, it follows that $t\mapsto e ^{-V(X_0)}Z_t$ is a forward $R^o$-supermartingale. In particular its expectation $t\mapsto E _{R^o} [e ^{-V(X_0)}Z_t]$ is a decreasing function so that $\hat R(\OO)=E _{R^o}(e ^{-V(X_0)}Z_1)\le E _{R^o}(e ^{-V(X_0)}Z_0)=\IX e ^{-V(z)}\,dz<\infty.$ 
\\
On the other hand,  since both $R^o$ and $d\hat R/dR^o$ are invariant with respect to the  time reversal $X^*$: $X^*_t:=X _{1-t,\ t\in\ii }$, $\hat R$ is also invariant with respect to time reversal: $(X^*)\pf \hat R=\hat R.$ In particular, its endpoint marginals are equal: $\hat R_0=\hat R_1.$ Consequently, $\hat R _{[0,t]}$ doesn't send mass to a cemetery point $\dagger$ outside $\XX$ as time increases, for otherwise its terminal marginal $\hat R_1$ would  give a positive mass to $\dagger,$ in contradiction with $\hat R_0(\dagger)=0$ and $\hat R_0=\hat R_1.$ Hence, $Z$ is a genuine forward $R^o$-martingale. With It\^o's formula, we see that 
 $dZ_t=-Z_t\nabla \frac{V}{2}(X_t)\cdot dX_t,$ $R^o\ae$  and by Girsanov's theory we know that $\hat R$ is a (unique) solution to the martingale problem associated with the generator $L= (-\nabla V\cdot \nabla+\Delta)/2.$ 
Finally, we take $R=\hat R$ and it is easy to check that $L$ is symmetric in $L^2(m),$ which implies that $m$ is reversing. 
\end{proof}

\begin{remark}\label{rem-03}
When $L$ is given by \eqref{eqd-04}, for any non-zero nonnegative  functions $f_0,g_1\in L^2(\vol),$ the smoothing effect of the heat kernels $\rf$ and $\rb$ in \eqref{eq-23}  allows us  to define   \emph{classical} gradients $\nabla \psi_t$ and $\nabla \varphi_t$ for all $t$ in $[0,1)$ and $(0,1]$ respectively. We see that $\nabla\psi_t$ and $\nabla\varphi_t$ are  the forward and backward drift vector fields of the canonical process under $P.$ 
\end{remark}

Next result proposes a general setting where  Assumptions \ref{ass-03} are satisfied. The manifold $\XX$ is assumed to be compact to avoid integrability troubles.

\begin{proposition}\label{res-03}
Suppose that $\XX$ is a compact  Riemannian manifold without boundary and $V:\XX\to\RR$  is $\mathcal{C}^4$-regular.  Then, for any $ \mathcal{C}^2$-regular function $u:\XX\to\RR,$ the function $(t,x)\mapsto u(t,x):=E _{ R}[u(X_t)\mid X_0=x]$ belongs to $ \mathcal{C}^{ 1,2}((0,1)\times\XX).$

In particular, if in addition $\XX$ is assumed to be connected and $f_0$ and $g_1$ are non-zero nonnegative  $ \mathcal{C}^2$-regular functions, the functions $f$ and $g$ defined at \eqref{eq-23} and their logarithms $ \varphi$ and $ \psi$ are classical solutions of \eqref{eq-27} and \eqref{eqd-02}.
\end{proposition}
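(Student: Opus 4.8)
The plan is to establish parabolic regularity for the heat semigroup on a compact manifold and then feed this into the stochastic representations of $f$ and $g$. The first statement concerns the function $(t,x)\mapsto u(t,x):=E_R[u(X_t)\mid X_0=x]$, which is nothing but $P_t u$ where $(P_t)_{t\ge0}$ is the Markov semigroup generated by $L=(-\nabla V\cdot\nabla+\Delta)/2$ acting on the reversing measure $m=e^{-V}\vol$. First I would identify $u(t,x)$ as the solution of the forward Cauchy problem $(\partial_t-L)u=0$ with $u(0,\cdot)=u$, so that showing $u\in\mathcal{C}^{1,2}((0,1)\times\XX)$ reduces to a standard interior regularity statement for a uniformly parabolic second-order operator on a compact manifold.

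Since $\XX$ is compact without boundary and $V\in\mathcal{C}^4$, the operator $L$ has smooth ($\mathcal{C}^3$ at least) coefficients and its principal part $\Delta/2$ is uniformly elliptic. The cleanest route is to invoke the spectral theory of the self-adjoint operator $L$ on $L^2(\XX,m)$: compactness gives a discrete spectrum $\{-\lambda_k\}$ with smooth eigenfunctions $\{e_k\}$, and the heat semigroup admits a smooth kernel $p_t(x,y)$ with $P_tu(x)=\IX u(y)p_t(x,y)\,m(dy)$. The exponential decay $e^{-\lambda_k t}$ in the spectral expansion, combined with polynomial growth of the Sobolev norms of the $e_k$, yields that $P_tu$ is $\mathcal{C}^\infty$ in $x$ and $\mathcal{C}^\infty$ in $t$ on $(0,\infty)\times\XX$ for any $u\in L^2(m)$, which is far stronger than the claimed $\mathcal{C}^{1,2}$. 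Alternatively, one cites interior Schauder estimates \cite{FS93} for the parabolic equation directly; I would mention both but lean on the semigroup smoothing since it also covers the case where the initial datum is merely $L^2$, as needed in Remark \ref{rem-03}.

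For the second assertion I would specialize to $u=f_0$ and $u=g_1$. By the representations \eqref{eq-23}, $f_t(z)=E_R(f_0(X_0)\mid X_t=z)$ and $g_t(z)=E_R(g_1(X_1)\mid X_t=z)$; using $m$-reversibility of $R$ the backward and forward kernels coincide with the symmetric heat kernel, so $f_t=P_{t}^{*}f_0$ and $g_t=P_{1-t}g_1$ are both given by the smoothing semigroup applied to a $\mathcal{C}^2$ (hence $L^2$) datum. The first part therefore gives $f,g\in\mathcal{C}^{1,2}((0,1)\times\XX)$ and, by the $\mathcal{C}^\infty$ smoothing, they solve the linear equations \eqref{eq-27} in the classical sense. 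Positivity everywhere on $(0,1)\times\XX$ follows from the strict positivity of the heat kernel $p_t(x,y)>0$ on a connected compact manifold together with the hypothesis that $f_0,g_1$ are non-zero and nonnegative; strict positivity is exactly Assumption \ref{ass-03}-(a). Once $f,g>0$ and $\mathcal{C}^{1,2}$, their logarithms $\varphi=\log f$ and $\psi=\log g$ are $\mathcal{C}^{1,2}$ as well, and a direct application of the chain rule to \eqref{eq-27} produces the nonlinear HJB equations \eqref{eqd-02}, so $\varphi,\psi$ are classical solutions.

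The main obstacle is purely the regularity input: one must make sure the parabolic smoothing is genuinely classical up to the order $\mathcal{C}^{1,2}$ and that the coefficients' regularity ($V\in\mathcal{C}^4$, so that $\nabla V$ and its derivatives are controlled) suffices for the Schauder or spectral argument. Compactness removes all integrability and boundary difficulties, and the strict positivity of the heat kernel on a connected manifold disposes of the positivity requirement; thus the proof is essentially an assembly of standard facts, with the only real care needed in verifying that $\mathcal{C}^4$ regularity of $V$ propagates to the claimed $\mathcal{C}^{1,2}$ regularity of $P_t u$ and is not eroded by the logarithm, which is harmless precisely because $f,g$ are bounded away from zero on compact time subintervals.
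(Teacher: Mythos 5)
Your proof is correct, but it follows a genuinely different route from the paper's. You argue analytically: identify $u(t,x)$ with the semigroup $e^{tL}u(x)$ for the self-adjoint operator $L=(-\nabla V\cdot\nabla+\Delta)/2$ on $L^2(\XX,m)$, and use compactness of $\XX$ to get a discrete spectrum, a smooth strictly positive heat kernel, and parabolic smoothing, so that $P_tu$ is smooth in $t$ and (up to the regularity allowed by $V\in\mathcal{C}^4$) better than $\mathcal{C}^2$ in $x$ for $t>0$. The paper instead stays probabilistic: using Lemma \ref{res-03a} it writes $u(t,x)=e^{V(x)}\,\EE[u(B^x_t)Z_t(B^x)]$ as a Feynman--Kac type expectation over a Brownian motion $B^x$ started at $x$, obtains $x$-regularity by constructing (via parallel transport) a coupling $x'\mapsto B^{x'}$ that is almost surely continuous and then differentiating under the expectation sign, and obtains $t$-regularity from It\^o's formula together with the martingale property of $Z_t(B^x)$. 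Both arguments are sound and both reduce the second assertion to the same two observations you make: positivity of $f_t,g_t$ on $(0,1)\times\XX$ from connectivity (strict positivity of the kernel), and the chain rule turning the linear equations \eqref{eq-27} into the HJB equations \eqref{eqd-02}. What your spectral route buys is stronger smoothing and the ability to handle merely $L^2$ data directly (which is exactly the point raised in Remark \ref{rem-03}, where the paper concedes its $\mathcal{C}^2$ hypothesis on $f_0,g_1$ is not optimal); what the paper's route buys is self-containedness within its probabilistic framework, avoiding any appeal to elliptic regularity or Weyl-type eigenfunction estimates. The one point to keep an eye on in your version is that the claimed $\mathcal{C}^\infty$ smoothness in $x$ is capped by the regularity of the drift $\nabla V\in\mathcal{C}^3$, but as you note this is far more than the $\mathcal{C}^{1,2}$ actually required.
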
 

\begin{proof}
For any path $ \omega\in\OO$ and any $0\le t\le1,$ we denote 
$$
Z_t( \omega):= \exp \left(-[V(\omega_0)+V(\omega_t)]/2+\int_0^t \left[\Delta V(\omega_s)/4-|\nabla V(\omega_s)|^2/8\right]\,ds\right).
$$
With Lemma \ref{res-03a} we see that for 	all $0\le t\le 1,$  $Z_t= \frac{dR _{ [0,t]}}{dR^o _{ [0,t]}}$ and
\begin{equation*}
u(t,x)= \frac{E _{ R^o}[ u(X_t)Z_1\mid X_0=x]}{E _{ R^o}[Z _1\mid X_0=x]}
=e ^{ V(x)} \EE[u(B_t ^x)Z_t(B^x)]
\end{equation*}
where $(B _{ s}^x) _{ 0\le s\le 1}$ is a Brownian motion starting from $x$ under some abstract probability measure  whose expectation  is denoted by $\EE$. By means of parallel transport, it is possible to build on any small enough neighborhood $U$ of $x$ a coupling such that $x'\in U\mapsto B ^{ x'}$ is almost surely continuous with respect to the uniform topology on $\OO.$ This coupling corresponds to $B^x=x+B^0$ in the Euclidean case.  The announced  $x$-regularity is a consequence of our assumptions which allow us to differentiate (in the usual deterministic sense) in the variable $x$ under the expectation sign $\EE.$  On the other hand, the  $t$-regularity is a consequence of stochastic differentiation: apply Itô's formula to $u(B^x_t)$ and take advantage of the martingale property of $Z_t(B^x)$.

As regards last statement, the connectivity assumption implies the positivity of $f$ and $g$ on $(0,1)\times \XX$  as soon as $f_0$ and $g_1$ are  nonnegative and not vanishing everywhere.
 \end{proof}

We gave the detail
 of the proof of Proposition \ref{res-03} because of its ease. But in view of Remark \ref{rem-03}, the requirement that $f_0$ and $g_1$ are $ \mathcal{C}^2$ is not optimal.  However, this restriction will not be harmful when investigating convexity properties of the entropy along interpolations.

In view of Proposition \ref{res-03}, we remark that under its assumptions the functions $f_t,g_t$ belong to the domain of the carré du champ operator. Hence, for any $0<t<1,$ the stochastic derivatives of $P$ are well-defined on $ \mathcal{C}^2(\XX)$  and equal to
$$
\left\{ \begin{array}{lcl}
\Af_t&=&\Delta/2+\nabla(-V/2+ \psi_t)\cdot\nabla,\\ \Ab_t&=&\Delta/2+\nabla (-V/2+\varphi_t)\cdot\nabla.
\end{array}\right.
$$
 Here,  $\psi$ and $ \varphi$ are respectively the classical solutions (compare Remark \ref{rem-04}-(a)) of the HJB equations \eqref{eq-18} and \eqref{eq-18b} with $\bbf=\bbb=-\nabla V/2.$
\\
Bochner's formula relates the iterated carré du champ $\Gamma_2^o$ of $L^o= \Delta/2$ and the Ricci curvature:
\begin{equation}\label{eq-50}
\Gamma_2^o(u)=\| \nabla^2 u\|^2_{\mathrm{HS}}+\mathrm{Ric}(\nabla u)
\end{equation}
where  $\nabla^2 u$ is the Hessian of $u$ and
 $\| \nabla^2 u\|^2_{\mathrm{HS}}= \mathrm{tr}\, ((\nabla^2u)^2)$ is its squared Hilbert-Schmidt norm. As $\Gamma_2(u)= \Gamma_2^o(u)-[\nabla+\nabla^*] b(\nabla u)$, it follows that
 \begin{equation}\label{eq-17}
\Gamma_2(u)=\| \nabla^2 u\|^2_{\mathrm{HS}}+(\mathrm{Ric}+\nabla^2 V)(\nabla u).
\end{equation}

\begin{theorem}[Reversible Brownian diffusion process]\label{resd-06}
Let the reference Markov measure $R$ be associated with $L$ given at \eqref{eq-11} on 
 a compact connected  Riemannian manifold $\XX$  without boundary. It is  assumed that $V:\XX\to\RR$  is $\mathcal{C}^4$-regular and $f_0$, $g_1$ are non-zero nonnegative  $ \mathcal{C}^2$-regular functions. Then,   along the entropic interpolation $[\mu_0,\mu_1]^R$ associated with $R$, $f_0$ and $g_1$, we have for all $0<t<1,$
\begin{eqnarray*}
\frac{d}{dt}H(\mu_t|e ^{ -V}\vol)&=&\left\langle \ud\Big(|\nabla\psi_t|^2-|\nabla\varphi_t|^2\Big),\mu_t \right\rangle,\\
\frac{d^2}{dt^2}H(\mu_t|e ^{ -V}\vol)&=&\left\langle \ud\Big(\Gamma_2(\psi_t)+\Gamma_2(\varphi_t)\Big),\mu_t \right\rangle
\end{eqnarray*} 
where  $ \psi$ and $ \varphi$ are the classical solutions of the HJB equations \eqref{eq-18} and \eqref{eq-18b} with $\bbf=\bbb=-\nabla V/2$,  $\Gamma(u)=|\nabla u|^2$ and $ \Gamma_2$ is given by \eqref{eq-17}.
\end{theorem}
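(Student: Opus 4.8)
The plan is to read off both identities from the abstract derivative formulas of Claim \ref{resd-04}, after specializing the operators $\Tf, \Tb, \Tf_2, \Tb_2$ to the reversible diffusion generated by \eqref{eq-11} and, crucially, after upgrading the informal computation of Claim \ref{resd-02} to a rigorous one by controlling the regularity of the Schr\"odinger potentials. Here $b_\bot = 0$, so that $\bbf = \bbb = -\nabla V/2$ and $\Lf = \Lb = L$; the forward and backward pictures coincide and I will drop the arrows throughout.

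First I would settle regularity. Since $\XX$ is compact and $f_0, g_1$ are $\mathcal{C}^2$, they are bounded, so Assumptions \ref{ass-02} hold automatically. By Proposition \ref{res-03}, under the standing hypotheses the functions $f$ and $g$ of \eqref{eq-23} are classical solutions of \eqref{eq-27} on $(0,1)\times\XX$, and connectedness together with the strict positivity of the heat kernel forces $f_t, g_t > 0$ there. Hence $\varphi = \log f$ and $\psi = \log g$ are well defined and are classical solutions of the HJB equations \eqref{eq-18}, \eqref{eq-18b} with $\bbf = \bbb = -\nabla V/2$. To run the second-derivative computation I need a little more, namely that $\psi, \varphi$ lie in the domains of the compositions of $L$ and $\Gamma$ appearing in \eqref{eqd-10}; this I would obtain from interior parabolic regularity (the coefficients are smooth off the boundary $t \in \{0,1\}$ because $V \in \mathcal{C}^4$), which promotes the $\mathcal{C}^{1,2}$ conclusion of Proposition \ref{res-03} to sufficient spatial smoothness on the open cylinder.

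Next I would invoke the explicit computations of the subsection ``Computing $\Theta$ and $\Theta_2$''. For the generator \eqref{eq-11} one has $\Gamma(u) = |\nabla u|^2$ and $Cu = |\nabla u|^2/2$, so by \eqref{eqd-11a} $\Tf \psi_t = |\nabla \psi_t|^2/2$ and $\Tb \varphi_t = |\nabla \varphi_t|^2/2$; feeding this into Proposition \ref{resd-03} (equivalently the first line of Claim \ref{resd-04}), via \eqref{eq-22}, yields the asserted formula for $H'(t)$. For the second derivative I would use that $R$ is a diffusion, so $\Gamma$ is a derivation and the last two terms of \eqref{eqd-10} collapse, giving $\Theta_2 u = L\Gamma(u)/2 - \Gamma(Lu, u)$, i.e.\ $\Tf_2 = \Gf_2/2$ and $\Tb_2 = \Gb_2/2$ as in \eqref{eq-38}. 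Since $b_\bot = 0$, the correction terms in \eqref{eq-17b} vanish and Bochner's formula \eqref{eq-50} collapses both operators to the single $\Gamma_2$ of \eqref{eq-17}; substituting $\Tf_2 \psi + \Tb_2 \varphi = (\Gamma_2(\psi) + \Gamma_2(\varphi))/2$ into the second line of Claim \ref{resd-04} gives the second formula.

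The hard part will be the rigorous justification of Claim \ref{resd-02}, which is stated only informally. Concretely I must legitimize the differentiation under the integral sign in $\frac{d}{dt}\langle -\Ab\varphi, \mu\rangle$ and $\frac{d}{dt}\langle \Af \psi, \mu\rangle$, together with the algebraic manipulations that pass through $\Ab^2 \varphi$ and $\Af^2 \psi$; these a priori involve fourth-order spatial derivatives, even though the final answer only sees second derivatives through Bochner's identity. Compactness of $\XX$ removes all integrability issues at spatial infinity, and strict positivity of $f_t, g_t$ on $0 < t < 1$ keeps every division bounded on the interior, so the exchange of $d/dt$ and $\IX \cdot\, d\mu_t$ is controlled by the interior smoothness secured in the regularity step; the identities then hold pointwise and integrate against $\mu_t$. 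Restricting to $0 < t < 1$ is precisely what lets me avoid the boundary data $\log f_0, \log g_1$, which may equal $-\infty$ where $f_0, g_1$ vanish.
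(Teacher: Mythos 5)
Your proposal follows essentially the same route as the paper: verify that the compactness and $\mathcal{C}^2$ hypotheses give Assumptions \ref{ass-02}, invoke Proposition \ref{res-03} to secure Assumptions \ref{ass-03}, and then read the two formulas off Claim \ref{resd-04} together with the identifications $\Theta=\Gamma/2$ and $\Theta_2=\Gamma_2/2$ from Section \ref{sec-diff}. Your extra paragraph on upgrading the $\mathcal{C}^{1,2}$ regularity of Proposition \ref{res-03} to the higher spatial smoothness implicitly needed for $\Af^2\psi$ and $\Ab^2\varphi$ is a point the paper passes over silently, but it is a refinement of the same argument rather than a different one.
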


\begin{proof}
The assumptions imply that $f_0$ and $g_1$ satisfy     \eqref{eq-13} and they allow us to apply Proposition \ref{res-03} so that Assumptions \ref{ass-03} are satisfied and Claim \ref{resd-04} is a rigorous result.
\end{proof}

\section{Random walk on a graph}\label{sec-RW}

Now we take as our reference measure $R$ a continuous-time Markov process  on  a countable state space $\XX$ with a graph structure $(\XX,\sim)$. The set $\XX$ of all vertices is equipped with the graph relation  $x\sim y$ which  signifies that $x$ and $y$ are adjacent, i.e.\ $\left\{x,y\right\}$ is a non-directed edge. The degree $n_x:=\#\left\{y\in\XX,x\sim y\right\}$ of each $x\in\XX$ is the number of its neighbors.  
It is assumed that $(\XX,\sim)$ is locally finite, i.e.\  $n_x<\infty$ for all $x\in\XX$ and also that 
$(\XX,\sim)$ is connected. This means that for any couple $(x,y)\in\XXX$ of different states, there exists a finite chain $(z_1,\dots, z_k)$ in $\XX$ such that $x\sim z_1\sim z_2\dots \sim z _{k}\sim y$. In particular $n_x\ge1,$ for all $x\in\XX.$

\subsection*{Dynamics of the entropic interpolation}

A (time homogeneous) random walk on $(\XX,\sim)$ is a Markov measure with forward derivative $\partial+\Lf$ defined for all function $u\in \RR ^{\XX}$ by
\begin{equation*}
\Lf  u(x)=\sum _{y :x\sim y} (u_y -u_x)\Jf_x (y)=:\IX D_x u\,d\Jf _{ x},\qquad 0\le t<1,x\in\XX, 
\end{equation*}
where $\Jf_x (y)$ is the  instantaneous frequency of forward jumps from $x$ to $y$,
$$
D_xu(y)=Du(x,y):=u(y)-u(x)
$$
is the discrete gradient and 
\begin{equation*}
\Jf _{ x}=\sum _{y :x\sim y} \Jf_x(y)\, \delta_y \in\MX,
\qquad 0\le t<1,\ x\in\XX
\end{equation*}
is the  forward jump kernel. Similarly, one denotes its backward derivative by
 $$
 \Lb u(x)=\IX D_xu\,d\Jb _{x},\qquad x\in\XX, u\in \RR ^{\XX}.
 $$
For the  $m$-stationary Markov measure $R$ with stochastic derivatives $\Lf$ and $\Lb$,  the time-reversal duality formula is
$$m(x)\Jf_x(y)=m(y)\Jb_y(x),\quad\forall x\sim y\in\XX.$$
 The expressions  $\Lf u$ and $\Lb u$ can be seen as  the matrices
\begin{equation}\label{eqd-17}
\left\{ \begin{array}{lcl}
\Lf =\big(\1 _{\left\{x\sim y\right\} } \Jf_x(y)-\1_{\{x=y\}}\Jf _{ x}(\XX)\big)_{x,y\in\XX}\\
\Lb =\big(\1 _{\left\{x\sim y\right\} } \Jb_x(y)-\1_{\{x=y\}}\Jb _{ x}(\XX)\big)_{x,y\in\XX}
\end{array}\right.
\end{equation}
 acting on the column vector $u=[u_x] _{ x\in\XX}.$ Therefore, the solutions  $f$ and $g$ of \eqref{eq-27} are
\begin{equation*}
f(t)= e ^{ t\Lb} \, f_0,\qquad
g(t)=e ^{( t-1)\Lf} \, g_1,
\end{equation*}
where $f:t\in\ii\mapsto [f_x]_{x\in\XX}(t)\in \RR^\XX$ and $g:t\in\ii\mapsto [g_x]_{x\in\XX}(t)\in \RR^\XX$ are column vectors, whenever these exponential matrices are well-defined. 

Let us  compute the operators $B$, $ \Gamma$ and $A.$ By a direct computation, we obtain for all $0<t<1$ and $x\in\XX,$
\begin{equation*}
\left\{ \begin{array}{lcl}
\Bf  u(x)&=&\IX (e ^{ D_xu}-1)\, d\Jf _{ x},\\
\Bb  u(x)&=&\IX (e ^{ D_xu}-1)\, d\Jb _{ x},
\end{array}\right.
\quad
\left\{ \begin{array}{lcl}
\Gf (u,v)(x)&=&\IX D_xu D_xv\, d \Jf _{ x},\\
\Gb (u,v)(x)&=&\IX D_xu D_xv\, d \Jb _{ x},
\end{array}\right.
\end{equation*}
and
\begin{equation*}
\left\{ \begin{array}{lclcl}
\Af _{ t}u(x)&=&\IX D_xu\, e ^{ D _{ x} \psi_t}\, d\Jf _{ x}&=&\displaystyle{\sum _{ y:x\sim y} [u(y)-u(x)]\, \frac{g_t(y)}{g_t(x)}\, \Jf _{ x}(y)},\\ \\
\Ab _{ t}u(x)&=&\IX D_xu\, e ^{ D _{ x} \varphi_t}\, d\Jb _{ x}&=&\displaystyle{\sum _{ y:x\sim y} [u(y)-u(x)]\, \frac{f_t(y)}{f_t(x)}\, \Jb _{ x}(y)}.

\end{array}\right.
\end{equation*}
The matrix representation of these operators is
\begin{equation*}
\left\{ \begin{array}{lcl}
\Af_t=\Af ({g_t})=\left(\1 _{\left\{x\sim y\right\} }\displaystyle{\frac{g_t(y)}{g_t(x)}} \Jf_x(y)-\1_{\{x=y\}}\sum _{ z:z\sim x} \displaystyle{\frac{g_t(z)}{g_t(x)}} \Jf_x(z)\right)_{x,y\in\XX}\\ \\
\Ab_t=\Ab ({f_t})=\left(\1 _{\left\{x\sim y\right\} }\displaystyle{\frac{f_t(y)}{f_t(x)}} \Jb_x(y)-\1_{\{x=y\}}\sum _{ z:z\sim x} \displaystyle{\frac{f_t(z)}{f_t(x)}} \Jb_x(z)\right)_{x,y\in\XX}\end{array}\right.
\end{equation*}
The forward-backward systems describing the evolution  of $[ \mu_0,\mu_1]^R$ are

\begin{equation*}
\mu_t= {\mu_0\, \exp \left( \int_0^t \Af({e ^{ (s-1)\Lf }\, g_1})\,ds\right)}= {\mu_1\, \exp \left( \int_t^1 \Ab(e ^{ s\Lb}f_0)\,ds\right)},\quad 0\le t\le1
\end{equation*}
where the measures $ \mu_t$ are seen as row vectors and the functions $f_0,g_1$ as column vectors.

\subsection*{Derivatives of the entropy}

A pleasant feature of the discrete setting is that no spatial   regularity is required for a function to be in the domain of the operators $L,\Gamma,\dots$ The only required regularity is that the functions $f$ and $g$ defined by \eqref{eq-23} have to be twice differentiable with respect to  $t$  on the open interval $(0,1).$ But this is ensured by the following lemma.

\begin{lemma}\label{res-04}
 Consider $f_0, g_1$ as in Definition \ref{def-fg} and suppose that $f_0,g_1\in L^1(m)\cap L^2(m)$ where the stationary measure $m$ charges every point of $\XX.$ 
Then, under the  assumption that
\begin{equation}\label{eq-16}
\sup _{ x\in\XX} \{\Jf _{ x}(\XX)+\Jb_x(\XX)\}< \infty,
\end{equation}
 for each $x\in\XX,$ $t\mapsto f(t,x)$ and $t\mapsto g(t,x)$ are  $ \mathcal{C}^{ \infty}$-regular on $(0,1)$.
\end{lemma}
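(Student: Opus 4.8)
The plan is to reduce everything to the single fact that, under the uniform bound \eqref{eq-16}, the forward and backward generators $\Lf$ and $\Lb$ extend to \emph{bounded} operators on $L^2(m)$. Once I have this, the representations $f(t)=e^{t\Lb}f_0$ and $g(t)=e^{(1-t)\Lf}g_1$ recorded before the lemma are justified and yield smoothness almost for free: since $\Lb$ is bounded, the exponential series $\sum_{n\ge0}(t\Lb)^n/n!$ converges in operator norm for every $t$, so $t\mapsto e^{t\Lb}$ is an entire operator-valued map and $t\mapsto f(t)=e^{t\Lb}f_0\in L^2(m)$ is a real-analytic $L^2(m)$-valued curve; the same holds for $g$ via $\Lf$. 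Finally, because $m$ charges every point, for each fixed $x$ the evaluation functional $u\mapsto u(x)$ obeys $|u(x)|\le m(x)^{-1/2}\|u\|_{L^2(m)}$ and is therefore continuous on $L^2(m)$. Composing this bounded functional with the analytic curves shows that $t\mapsto f(t,x)$ and $t\mapsto g(t,x)$ are real-analytic on $\RR$, hence $\mathcal{C}^\infty$ on $(0,1)$, which is exactly the claim.

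The only genuine computation is the $L^2(m)$-boundedness, so that is where I would spend effort. Writing $\kappa:=\sup_{x}\{\Jf_x(\XX)+\Jb_x(\XX)\}<\infty$, for $u\in L^2(m)$ I would first apply Cauchy--Schwarz with the weights $\Jf_x(y)$ to get
$$
|\Lf u(x)|^2=\Big|\IX D_xu\,d\Jf_x\Big|^2\le \Jf_x(\XX)\,\Gf(u)(x)\le \kappa\,\Gf(u)(x),
$$
whence $\|\Lf u\|_{L^2(m)}^2\le \kappa\sum_x m(x)\sum_{y:x\sim y}(u_y-u_x)^2\,\Jf_x(y)$. I would then use $(u_y-u_x)^2\le 2u_x^2+2u_y^2$ to split the right-hand side into a diagonal and an off-diagonal part. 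The diagonal part is $\sum_x m(x)u_x^2\,\Jf_x(\XX)\le \kappa\|u\|_{L^2(m)}^2$, controlled directly by \eqref{eq-16}. For the off-diagonal part, the time-reversal duality $m(x)\Jf_x(y)=m(y)\Jb_y(x)$ rewrites $\sum_{x,y}m(x)\Jf_x(y)u_y^2$ as $\sum_y m(y)u_y^2\,\Jb_y(\XX)\le\kappa\|u\|_{L^2(m)}^2$. Collecting terms gives $\|\Lf u\|_{L^2(m)}\le 2\kappa\,\|u\|_{L^2(m)}$, and the symmetric argument (now invoking the duality in the form $m(x)\Jb_x(y)=m(y)\Jf_y(x)$) yields $\|\Lb u\|_{L^2(m)}\le 2\kappa\,\|u\|_{L^2(m)}$.

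The main obstacle is precisely this boundedness estimate, and it is instructive that both hypotheses enter there in complementary roles: the uniform bound \eqref{eq-16} tames the diagonal contribution, while the $m$-stationarity — through the time-reversal duality formula — is exactly what converts the off-diagonal contribution back into a diagonal one. Everything after that is soft functional analysis (entireness of the exponential of a bounded operator plus continuity of point evaluation). I would also note that only $f_0,g_1\in L^2(m)$ is used for smoothness; the additional membership in $L^1(m)$ is not needed here and serves other purposes, such as guaranteeing that the marginals remain genuine probability densities.
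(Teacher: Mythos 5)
Your proposal is correct and follows essentially the same route as the paper: a uniform bound on the jump rates makes the generator a bounded operator on a Banach space in which point evaluation is continuous (because $m$ charges every point), so $t\mapsto e^{t\Lb}f_0$ is an analytic curve and pointwise smoothness follows. The only difference is the choice of ambient space: the paper works in $L^1(m)$, first showing $f_t\in L^1(m)$ via $m$-stationarity (this is where the hypothesis $f_0,g_1\in L^1(m)$ is used) and then asserting that \eqref{eq-16} makes $\Lb$ bounded on $L^1(m)$, whereas you work in $L^2(m)$ and correctly observe that the $L^1$ hypothesis is then not needed for this lemma. Your explicit verification of the $L^2(m)$-boundedness, with the time-reversal duality $m(x)\Jf_x(y)=m(y)\Jb_y(x)$ converting the off-diagonal sum into a diagonal one, is a detail the paper leaves implicit (the analogous computation is in fact also required for the $L^1$ bound, which likewise needs both $\Jf$ and $\Jb$ in \eqref{eq-16}), so your write-up is, if anything, slightly more complete on that point.
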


\begin{proof}
For any $0\le t\le 1,$ $f_t$ and $g_t$ are well-defined $P_t$\ae, where $P$ is given by \eqref{eq-15}. But, as $R$ is an irreducible random walk, for each $0<t<1,$ ``$P_t$\ae''  is equivalent to ``everywhere''. Since $f_0\in L^1(m)$ and $R$ is $m$-stationary,  we have $\IX f_t\, dm=E_RE_R[f_0(X_0)\mid X_t]=E_R(f_0(X_0))=\IX f_0\,dm< \infty,$ implying that $f_t\in L^1(m).$ As  $\sup_x\Jb_x(\XX)< \infty$,  $\Lb$ is a bounded  operator on $L^1(m)$. This implies that $t\in (0,1)\mapsto f_t= e ^{ t\Lb}f_0\in L^1(m)$ is differentiable and $(d/dt)^k f_t=\Lb^k f_t$ for any $k$. As $m$ charges every point, we also see that $t\in (0,1)\mapsto f_t(x)\in \RR$ is infinitely differentiable for every $x$. A similar proof works with $g$ instead of $f$.
\end{proof}

As an important consequence of Lemma \ref{res-04} for our purpose, we see that {the statement of Claim \ref{resd-04} is rigorous} in the present discrete setting.

\begin{theorem}\label{resd-08} 
Let $R$ be an $m$-stationary random walk with  jump measures $\Jf$ and $\Jb$ which satisfy \eqref{eq-16}.  Along any entropic interpolation $[ \mu_0,\mu_1]^R$ associated with a couple
 $(f_0, g_1)$ as in Definition \ref{def-fg} and such that $f_0,g_1\in L^1(m)\cap L^2(m)$, we have for all $0<t<1,$
\begin{equation*}
\frac{d}{dt}H(\mu_t|m)= \sum _{ x\in\XX} \left(\Tf\psi_t -\Tb \varphi_t\right) (x)\,\mu_t(x), 
\
\frac{d^2}{dt^2}H(\mu_t|m)= \sum _{ x\in\XX}\left(  \Tf _2 \psi_t+\Tb _2 \varphi_t\right)(x)\,\mu_t(x)
\end{equation*}
where the expressions of $ \Tf\psi_t$ and $\Tb \varphi_t$ are given at \eqref{eqd-11} and the expressions of $ \Tf_2\psi_t$ and $\Tb _2\varphi_t$ are given at Proposition \ref{res-08} below.
\end{theorem}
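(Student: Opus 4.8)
The plan is to show that the two displayed formulas are nothing but Claim \ref{resd-04} made rigorous, exactly as announced just after Lemma \ref{res-04}. Thus the whole task reduces to supplying, in the discrete setting, the regularity that legitimizes the formal computation in the proofs of Proposition \ref{resd-03} and Claim \ref{resd-02}. First I would stress that, in contrast with the manifold case, \emph{no spatial regularity} is needed: because $(\XX,\sim)$ is locally finite, each of the operators $\Lf,\Lb,\Gf,\Gb,\Bf,\Bb,\Af,\Ab,\Cf,\Cb$, and hence $\Tf,\Tb,\Tf_2,\Tb_2$, is defined at every vertex by a \emph{finite} sum over neighbours. Consequently every algebraic identity invoked in the proof of Claim \ref{resd-02} --- the derivation-type manipulations that collapse $\frac{d}{dt}\langle-\Ab\varphi,\mu\rangle$ to $\langle\Tb_2\varphi,\mu\rangle$ and $\frac{d}{dt}\langle\Af\psi,\mu\rangle$ to $\langle\Tf_2\psi,\mu\rangle$ --- holds pointwise in $x\in\XX$ for each fixed $t\in(0,1)$, with no extra hypothesis.

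Next I would secure the temporal regularity. Lemma \ref{res-04} applies verbatim under the present assumptions $f_0,g_1\in L^1(m)\cap L^2(m)$ and \eqref{eq-16}: for each $x$ the maps $t\mapsto f(t,x)$ and $t\mapsto g(t,x)$ are $\mathcal{C}^\infty$ on $(0,1)$, with $(d/dt)^k f_t=\Lb^k f_t$ and $(d/dt)^k g_t=\Lf^k g_t$ obtained by iterating the bounded operators $\Lb,\Lf$ on $L^1(m)$. Since $R$ is an irreducible random walk, $f_t$ and $g_t$ are strictly positive everywhere on $(0,1)$, so $\varphi_t=\log f_t$ and $\psi_t=\log g_t$ are also smooth in $t$; in particular the basic rules of calculus $\dot f=\Lb f$, $\dot g=-\Lf g$, $\dot\varphi=\Bb\varphi$ and $\dot\psi=-\Bf\psi$ --- the only time-derivative inputs to Claim \ref{resd-04} --- become genuine pointwise identities on $(0,1)\times\XX$, while the finiteness of $H(t)=\langle\varphi_t+\psi_t,\mu_t\rangle$ follows from Assumption \ref{ass-02}, which is implied by $f_0,g_1\in L^1(m)\cap L^2(m)$.

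The one genuinely analytic point, and the step I expect to be the main obstacle, is the interchange of the infinite summation $\sum_{x\in\XX}$ with $d/dt$ and $d^2/dt^2$ when differentiating $H(t)=\sum_{x}(\varphi_t+\psi_t)(x)\,\mu_t(x)$ term by term. Here I would again exploit \eqref{eq-16}: since it makes $\Lf,\Lb$ bounded on $L^1(m)$, the functions $f_t,g_t$ and all their $t$-derivatives remain in $L^1(m)$ with norms bounded locally uniformly in $t\in(0,1)$, which supplies the domination needed to commute sum and derivative on every compact subinterval. The delicate part is to control the logarithmic factors $\varphi_t,\psi_t$ and the mixed terms such as $e^{-u}\Gf(e^u\Bf u,u)$ against the weight $\mu_t=f_tg_t\,m$ uniformly in $x$, so that the differentiated series converge absolutely and uniformly on compacts. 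Once this domination is in hand, the pointwise identities of Claim \ref{resd-04} may be integrated against $\mu_t$, yielding the two displayed formulas; the explicit discrete expressions of $\Tf\psi_t,\Tb\varphi_t$ and of $\Tf_2\psi_t,\Tb_2\varphi_t$ are then simply those recorded at \eqref{eqd-11} and in Proposition \ref{res-08}.
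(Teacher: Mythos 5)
Your proposal follows essentially the same route as the paper, whose entire proof of this theorem is the one-line observation that Lemma \ref{res-04} supplies the temporal smoothness making Claim \ref{resd-04} rigorous in the discrete setting, where local finiteness renders all spatial operations finite sums. You are in fact slightly more scrupulous than the paper in flagging (without fully closing) the interchange of $\sum_{x\in\XX}$ with $d/dt$ and the domination of the logarithmic factors against $\mu_t$ --- a point the paper leaves entirely implicit --- so the proposal matches the paper's level of rigor and its structure.
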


\subsection*{Computing $ \Theta$ and $ \Theta_2$ }

Our aim now is to compute 
the operators $\Tf, \Tb, \Tf_2$ and $\Tb_2$ for a general random walk $R$. 
We  use the shorthand notation $\IX a(x,y)\,J_x(dy)=[\IX a\,dJ](x)$ and drop the arrows and the index $t$ during the intermediate computations.
The  Hamilton-Jacobi operator is 
$
Bu:=e ^{-u}Le^u=\IX (e ^{Du}-1)\,dJ
$
which gives
$$	
Cu:=(B-L)u=\IX \theta(Du)\,dJ
$$	
where the function $\theta$ is defined by
\begin{equation*}
\theta(a):=e^a-a-1,\quad a\in\RR.
\end{equation*}
Compare  $Cu=|\nabla u|^2/2,$ remarking that $\theta(a)=a^2/2+o _{a\to 0}(a).$
The convex conjugate $\ts$ of $\theta$ will be used in a moment. It is given by
\begin{equation*}
\ts(b)= \left\{\begin{array}{ll}
(b+1)\log(b+1)-b,& b>-1,\\
1,&b=-1,\\
\infty,&b<-1.
\end{array}\right.
\end{equation*}

\subsubsection*{Computation of $ \Theta$}

The carr\'e du champ is
$
\Gamma(u,v)= \IX DuDv\,dJ
$
so that
$
e ^{-u}\Gamma(e^u,u)= \IX Du(e ^{Du}-1)\,dJ.
$
Since $a(e^a-1)-\theta(a)=ae^a-e^a+1=\ts(e^a-1),$ with $\Theta u:=e ^{-u}\Gamma(e^u,u)-Cu,$ we obtain    
\begin{equation}\label{eqd-11}
\left\{
\begin{array}{lcl}
\Tf \psi_t(x)&=&\displaystyle{\sum _{ y:x\sim y} \ts\left(\frac{Dg_t(x,y)}{g_t(x)}\right)\,\Jf _{ x}(y)},\\
\Tb \varphi_t(x)&=&\displaystyle{\sum _{ y:x\sim y} \ts\left(\frac{Df_t(x,y)}{f_t(x)}\right)\,\Jb _{ x}(y)}.
\end{array}\right.
\end{equation}
where we used
$e ^{ D \psi_t(x,y)}-1= Dg_t(x,y)/g_t(x)$ and   $e^{ D \varphi_t(x,y)}-1= Df_t(x,y)/f_t(x).$ The ratio $Dg/g$ should be seen as the discrete logarithmic derivative of $g$. Recall that $g_t>0,$ $\mu_t\ae:$ we do not divide by zero.  Compare  $\Theta \psi=|\nabla  g/g|^2/2,$ remarking that $\ts(b)=b^2/2+o _{b\to 0}(b).$

\subsubsection*{Entropy production}

The entropy productions are
\begin{equation}\label{eqd-14}
\left\{
\begin{array}{lcl}
\If(t)&=&\sum _{ (x,y): x\sim y} \ts \left(\frac{Dg_t(x,y)}{g_t(x)}\right) \,\mu_t(x)
\Jf _{ x}(y),\\
\Ib(t)&=&\sum _{ (x,y): x\sim y} \ts \left(\frac{Df_t(x,y)}{f_t(x)}\right) \, \mu_t(x)\Jb _{ x}(y).
\end{array}\right.
\end{equation}

\subsubsection*{Computation of $\Theta_2$}

Unlike the diffusion case, no welcome cancellations occur. The carr\'e du champ $\Gamma$ is not a derivation anymore since $\Gamma(uv,w)-[u \Gamma(v,w)+v \Gamma(u,w)]=uvLw+\IX DuDvDw\,dJ$. 
Furthermore, we also loose the simplifying identity $C=\Theta.$ To give a readable expression of $\Theta_2$, it is necessary to introduce some simplifying shorthand notation:
$$
\left\{\begin{array}{lcl}
\aa&=&D u(x,y)\\
\aa'&=&D u(x,y')\\
\bb&=&D u(y,z)\\
\cc&=&D u(x,z)
\end{array}\right.,
\quad
\left\{\begin{array}{lcl}
\Ixy F(\aa)&=&\IX F(D u(x,y))\,J_x(dy)\\
\Ixyy F(\aa,\aa')&=&\IXX F(D u(x,y), D u(x,y'))\,J_x(dy)J_x(dy')\\
\Ixyz F(\aa,\bb)&=&\IXX F(D u(x,y), D u(y,z))\,J_x(dy)J_y(dz)\\
\Ixyz F(\aa,\cc)&=&\IXX F(D u(x,y),D u(x,z))\,J_x(dy)J_y(dz).\\
\end{array}\right. 
$$
Remark that $\Ixyz F(\cc)=\IX F(D u(x,z))\,J_x^2(dz)$ with $J_x^2(dz):=\int _{y\in\XX}J_x(dy)J_y(dz)$ and $\aa+\bb=\cc.$
\bigskip
\begin{center}
\scalebox{1} 
{
\begin{pspicture}(0,-1.15)(5.4,1.15)
\psdots[dotsize=0.12](2.84,0.67)
\psdots[dotsize=0.12](1.4,-0.33)
\psdots[dotsize=0.12](4.76,-0.77)
\psline[linewidth=0.04cm,arrowsize=0.05291667cm 3.0,arrowlength=1.4,arrowinset=0.4]{->}(2.86,0.65)(4.74,-0.73)
\psline[linewidth=0.04cm,arrowsize=0.05291667cm 3.0,arrowlength=1.4,arrowinset=0.4]{->}(2.8,0.63)(1.42,-0.31)
\psline[linewidth=0.04cm,arrowsize=0.05291667cm 3.0,arrowlength=1.4,arrowinset=0.4]{->}(1.46,-0.33)(4.72,-0.79)
\psline[linewidth=0.04cm,arrowsize=0.05291667cm 3.0,arrowlength=1.4,arrowinset=0.4]{->}(2.8,0.67)(0.66,0.63)
\usefont{T1}{ptm}{b}{n}
\rput(2.99,0.955){$x$}
\usefont{T1}{ptm}{m}{n}
\rput(1.07,-0.405){$y$}
\usefont{T1}{ptm}{m}{n}
\rput(5.1,-0.965){$z$}
\usefont{T1}{ptm}{m}{n}
\rput(0.3,0.695){$y'$}
\usefont{T1}{ptm}{m}{n}
\rput(2.47,0.055){$\aa$}
\usefont{T1}{ptm}{m}{n}
\rput(2.91,-0.905){$\bb$}
\usefont{T1}{ptm}{m}{n}
\rput(3.81,0.215){$\cc$}
\usefont{T1}{ptm}{m}{n}
\rput(1.58,0.895){$\aa'$}
\psdots[dotsize=0.12](0.6,0.63)
\end{pspicture} 
}
\end{center}
We also define the function 
\begin{equation}\label{eq-37}
h(a):=\ts(e^a-1)=ae^a-e^a+1,\quad a\in\RR.
\end{equation}

\begin{lemma}\label{res-07}
For any function $u\in \RR^\XX$ and any $x\in\XX,$
\begin{eqnarray*}
\Theta u(x)&=&\Ixy h(\aa),\\
\Theta_2 u(x)&=&\Big(\Ixy (e^\aa -1)\Big)^2+
	\Ixy[J_y(\XX)-J_x(\XX)] h(\aa )+\Ixyz [2e^\aa h(\bb )-h(\cc )].
\end{eqnarray*}
\end{lemma}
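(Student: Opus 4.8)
The plan is to prove the two identities in Lemma~\ref{res-07} by unwinding the definitions \eqref{eq-49} and \eqref{eqd-10} of $\Theta$ and $\Theta_2$ in the random-walk setting, using the discrete operators $Lu(x)=\Ixy\aa$, $Bu(x)=\Ixy(e^\aa-1)$, $Cu(x)=\Ixy\theta(\aa)$, and $\Gamma(u,v)(x)=\IX D_xu\,D_xv\,dJ$ computed just above. The first formula is essentially immediate: by the definition $\Theta u=e^{-u}\Gamma(e^u,u)-Cu$ and the algebraic identity $a(e^a-1)-\theta(a)=ae^a-e^a+1=h(a)$ already recorded in the text, we get $\Theta u(x)=\Ixy h(\aa)$. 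This reproduces \eqref{eqd-11} with $h(a)=\ts(e^a-1)$.

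For $\Theta_2$ I would expand the defining expression
\[
\Theta_2 u=L\Theta u+e^{-u}\Gamma(e^u,\Theta u)+e^{-u}\Gamma(e^u,u)Bu-e^{-u}\Gamma(e^u\,Bu,u)
\]
term by term at a fixed vertex $x$. The first two terms combine into $A_{u}(\Theta u)(x)=L(\Theta u)(x)+e^{-u}\Gamma(e^u,\Theta u)(x)$, which is the $(f,g)$-type operator $Au$ applied to the function $\Theta u$; writing this out with the transition weights $e^{D_xu(y)}=e^\aa$ gives a double sum indexed by $x\to y\to z$, producing a term of the form $\Ixyz e^\aa[h(\cc)-h(\aa)]$ together with the diagonal contribution $\Ixy J_y(\XX)h(\aa)$ coming from $L$ acting through $\Theta u(y)=\sum_z h(Du(y,z))J_y(z)$. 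The last two terms are both first-order in the sense that they only involve $\Gamma$ at $x$; using $Bu(x)=\Ixy(e^\aa-1)$ and the discrete product rule I would show they contribute $\big(\Ixy(e^\aa-1)\big)^2$ minus a compensating $\Ixyz e^\aa h(\cc)$-type piece, after splitting $Du(x,z)=\aa+\bb$ across the edge $x\to y\to z$.

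The main obstacle I expect is the bookkeeping of the non-derivation defect: since $\Gamma$ is not a derivation in the discrete case, the term $e^{-u}\Gamma(e^u\,Bu,u)-e^{-u}\Gamma(e^u,u)Bu$ does not collapse as it did in the diffusion computation, and one must carefully track which variable each increment $\aa,\bb,\cc$ refers to. The key algebraic step will be to combine the $x\to y\to z$ contributions from the $A_u(\Theta u)$ expansion with those from the product terms, and to recognize the recurring combination
\[
2e^\aa h(\bb)-h(\cc),
\]
which is exactly what appears in the claimed formula. Concretely, I would verify the pointwise identity relating $e^\aa h(\cc)$, $h(\aa)$, $h(\bb)$ and the product $e^\aa(e^\bb-1)$ using $\cc=\aa+\bb$ and the explicit form $h(a)=ae^a-e^a+1$; once this identity is in hand the three $x\to y\to z$ sums telescope into $\Ixyz[2e^\aa h(\bb)-h(\cc)]$, while the diagonal terms from $L$ assemble into $\Ixy[J_y(\XX)-J_x(\XX)]h(\aa)$ and the remaining first-order contribution yields the square $\big(\Ixy(e^\aa-1)\big)^2$. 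The rest is routine rearrangement of absolutely convergent sums, justified under the boundedness hypothesis \eqref{eq-16}.
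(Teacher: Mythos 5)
Your overall route is different from the paper's and is viable, but several of your intermediate attributions are wrong and would need to be corrected in a full write-up. The paper does \emph{not} expand the defining formula \eqref{eqd-10}: it explicitly goes back to the identity $\Tf_2\psi=\Af^2\psi+\frac{d}{dt}(\Af\psi)$ from \eqref{eqd-09}, computes $\Af\psi(x)=\Ixy \aa e^{\aa}$ and $\dot\psi(x)=-\Ixy(e^{\aa}-1)$, and so arrives directly at the intermediate expression
$\Theta_2u(x)=\Ixyz\bigl[e^{\aa}\bb e^{\bb}-(\aa e^{\aa}+e^{\aa})(e^{\bb}-1)\bigr]-\Ixyy\bigl[e^{\aa}\aa'e^{\aa'}-(\aa e^{\aa}+e^{\aa})(e^{\aa'}-1)\bigr]$,
which it then reorganizes using the pointwise identities $e^ab e^b-(ae^a+e^a)(e^b-1)=h(a)+2e^ah(b)-h(c)$ (with $c=a+b$) and $e^aa'e^{a'}-(ae^a+e^a)(e^{a'}-1)=(e^a-1)h(a')-h(a)(e^{a'}-1)+h(a')-(e^a-1)(e^{a'}-1)$. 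Your plan of expanding \eqref{eqd-10} term by term does lead to exactly the same intermediate expression (I checked: $\Af_u(\Theta u)+\bigl[e^{-u}\Gamma(e^u,u)Bu-e^{-u}\Gamma(e^uBu,u)\bigr]$ equals the displayed quantity after writing $h(\bb)=\bb e^{\bb}-(e^{\bb}-1)$), so the approach closes; the paper's detour through \eqref{eqd-09} simply buys a shorter computation with only two sums to track, while yours stays closer to the stated definition of $\Theta_2$.

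The concrete inaccuracies in your sketch are these. First, $\Af_u(\Theta u)(x)=\Ixyz e^{\aa}h(\bb)-\bigl(\Ixy e^{\aa}\bigr)\bigl(\Ixy h(\aa)\bigr)$; it does \emph{not} produce a term $\Ixyz e^{\aa}[h(\cc)-h(\aa)]$, and the contribution $\Ixy J_y(\XX)h(\aa)=\Ixyz h(\aa)$ does not come from $L$ acting on $\Theta u$ --- both $h(\cc)$ and the $\Ixyz h(\aa)$ piece only appear \emph{after} applying the first algebraic identity above to the combined $x\to y\to z$ integrand. Second, the last two terms of \eqref{eqd-10} contribute $\Ixyy\aa e^{\aa}(e^{\aa'}-1)-\Ixyz\aa e^{\aa}(e^{\bb}-1)$; the square $\bigl(\Ixy(e^{\aa}-1)\bigr)^2$ and the $-J_x(\XX)\Ixy h(\aa)$ correction only emerge when you combine their $\Ixyy$ part with the $-\Ixyy e^{\aa}h(\aa')$ coming from $\Af_u(\Theta u)$ and apply the second identity, so they are not produced by the product terms alone. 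None of this breaks the argument, but as written the bookkeeping does not balance, and the reader cannot reconstruct the proof from your description of which term yields which contribution.
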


\begin{proof}
The first identity is \eqref{eqd-11}. Let us look at $ \Theta_2u(x).$
Rather than formula \eqref{eqd-10}, for an explicit formulation of $\Theta_2$ in terms of $J$, it will be easier to go back to \eqref{eqd-09}:
$	
\Tf_2 \psi_t=\Af_t^2 \psi_t+\frac{d}{dt}(\Af_t \psi_t)
$	
with $\Af_tu=\IX Du\, e ^{D \psi_t}\,d\Jf$ and $\dot \psi=-\Bf \psi=-\IX (e ^{D \psi}-1)\,d\Jf.$ 
\\
Making use of $A \psi(x)=\Ixy \aa e^\aa$ and $\dot \psi(x)=\Ixy -(e^\aa-1),$ we see that
\begin{eqnarray*}
A^2 \psi(x)&=& \IX (A \psi_y-A \psi_x)e ^{D \psi(x,y)}\,J_x(dy)
	=\Ixyz e^\aa\bb e^\bb-\Ixyy e^\aa\aa' e ^{\aa'}	     \\
 \frac{d}{dt}(A \psi)(x)&=&\IX (D \psi(x,y) e ^{D \psi(x,y)}+e ^{D \psi(x,y)})(\dot \psi_y-\dot \psi_x)\,J_x(dy)\\
	&=& -\Ixyz (\aa e^\aa+\aa)(e^\bb-1)+\Ixyy (\aa e^\aa+\aa)(e ^{\aa'}-1)
\end{eqnarray*}
where $\aa,\aa'$ and $\bb$ are taken with $u=\psi.$
This shows that
\begin{equation*}
\Theta _2 u(x)=\Ixyz e^\aa \bb e^\bb -(\aa e^\aa +e^\aa )(e^\bb -1)
	-\Ixyy e^\aa \aa 'e^{\aa '}-(\aa e^\aa +e^\aa )(e^{\aa '}-1)
\end{equation*}
The functions defining the integrands  rewrite as follows
\begin{eqnarray*}
e^abe^b-(ae^a+e^a)(e^b-1)
	&=& h(a)+2e^ah(b)-h(c)\quad \textrm{with}\quad c=a+b,\\
e^aa'e^{a'}-(ae^a+e^a)(e^{a'}-1)
	&=& (e^a-1)h(a')-h(a)(e ^{a'}-1)+h(a')-(e^a-1)(e ^{a'}-1).
\end{eqnarray*}
Hence, 
\begin{eqnarray*}
&&\Ixyy e^\aa \aa 'e^{\aa '}-(\aa e^\aa +e^\aa )(e^{\aa '}-1)\\
	&=& \Ixyy(e^\aa -1)h(\aa ')-\Ixyy h(\aa )(e ^{\aa '}-1)+\Ixyy h(\aa ')-\Ixyy (e^\aa -1)(e ^{\aa '}-1)
\\
	&=&J_x(\XX)\Ixy h(\aa )-\left(\Ixy (e^\aa -1)\right)^2
\end{eqnarray*}
and  the desired result follows.
\end{proof}

Lemma \ref{res-07} leads us to the following evaluations.

\begin{proposition}\label{res-08}
 Consider $f_0, g_1$ as in Definition \ref{def-fg} and suppose that $f_0,g_1\in L^1(m)\cap L^2(m)$ where the stationary measure $m$ charges every point of $\XX.$ Also assume that $\Jf,\Jb$ satisfy \eqref{eq-16}. Then,
\begin{eqnarray}\label{eq-26a}
&&\Tf _{ 2} \psi_t(x)\\&&\quad=
	\left( \sum _{ y:x\sim y} (e ^{ D\psi_t(x,y)}-1)\, \Jf _x (y)\right)^2
	+ \sum _{ y:x\sim y}[\Jf _y (\XX)-\Jf _x (\XX)] \theta^* \left( e ^{ D\psi_t(x,y)}-1\right) \Jf _x (y)\notag\\
	&&\qquad +\sum _{ (y,z):x\sim y\sim z} \left[2 \theta^* \left( e ^{ D\psi_t(y,z)}-1\right) \Af _{ t,x} (y)\Jf _y (z)
	- \theta^* \left( e ^{ D\psi_t(x,z)}-1\right) \,\Jf _x (y)\Jf _y (z)  \right]\notag
\end{eqnarray}
and
\begin{eqnarray}\label{eq-26b}
&&\Tb _{ 2} \varphi_t(x)\\&&\quad=
	\left( \sum _{ y:x\sim y} (e ^{ D\varphi_t(x,y)}-1) \Jb _x (y)\right)^2
	+ \sum _{ y:x\sim y}[\Jb _y (\XX)-\Jb _x (\XX)] \theta^* \left( e ^{ D\varphi_t(x,y)}-1\right) \Jb _x (y)\notag\\
	&&\qquad +\sum _{ (y,z):x\sim y\sim z} \left[2 \theta^* \left( e ^{ D\varphi_t(y,z)}-1\right) \Ab _{ t,x} (y)\Jb _y (z)
	- \theta^* \left( e ^{ D\varphi_t(x,z)}-1\right) \,\Jb _x (y)\Jb _y (z)  \right]\notag
\end{eqnarray}
where
\begin{equation*}
\Af _{ t,x} (y)=e ^{ D\psi_t(x,y)}\,\Jf_x(y)=\displaystyle{\frac{g_t(y)}{g_t(x)}\,\Jf _x (y)},\qquad
\Ab _{ t,x} (y)=e^{ D\varphi_t(x,y)}\,\Jb_x(y)=\displaystyle{\frac{f_t(y)}{f_t(x)}\,\Jb_x (y)}
\end{equation*}
are the forward and backward jump frequencies of $P=f_0(X_0)g_1(X_1)\,R.$
\end{proposition}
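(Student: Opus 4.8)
The plan is to derive \eqref{eq-26a} and \eqref{eq-26b} by specializing the abstract expression for $\Theta_2$ already obtained in Lemma \ref{res-07}, so that the only work beyond that lemma is to secure the regularity needed to invoke it and then to translate its compact notation into explicit jump-kernel form. Concretely, Lemma \ref{res-07} gives, for a generic $u$ and a generic forward kernel $J$,
\begin{equation*}
\Theta_2 u(x)=\Big(\Ixy (e^\aa -1)\Big)^2+\Ixy[J_y(\XX)-J_x(\XX)]\,h(\aa)+\Ixyz[2e^\aa h(\bb)-h(\cc)],
\end{equation*}
with $h(a)=\ts(e^a-1)$ by \eqref{eq-37}, and the proposition is essentially this formula read off for $u=\psi_t,\,J=\Jf$ and for $u=\varphi_t,\,J=\Jb$.

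First I would record that the hypotheses make every object in the statement well defined at each fixed $x\in\XX$. Since $R$ is an irreducible random walk on the connected graph $(\XX,\sim)$, for each $0<t<1$ the functions $f_t,g_t$ of \eqref{eq-23} are strictly positive everywhere (this is the ``$P_t$\ae\ $=$ everywhere'' argument used in the proof of Lemma \ref{res-04}), so the discrete logarithmic derivatives $D\psi_t(x,y)=\log(g_t(y)/g_t(x))$ and $D\varphi_t(x,y)=\log(f_t(y)/f_t(x))$ never divide by zero. Local finiteness of $(\XX,\sim)$ turns each $\sum_{y:x\sim y}$ and each $\sum_{(y,z):x\sim y\sim z}$ into a finite sum, so no pointwise convergence issue arises. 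Finally, Lemma \ref{res-04} --- where \eqref{eq-16} enters to make $\Lf,\Lb$ bounded operators --- shows that $t\mapsto f(t,x),g(t,x)$ are $\mathcal{C}^\infty$ on $(0,1)$; hence $\psi_t,\varphi_t$ are $t$-differentiable, and the representation $\Tf_2\psi_t=\Af_t^2\psi_t+\frac{d}{dt}(\Af_t\psi_t)$ of \eqref{eqd-09} on which Lemma \ref{res-07} rests is legitimate, the $t$-differentiation commuting with the finite neighbor sums.

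Next I would apply Lemma \ref{res-07} with $u=\psi_t$ and $J=\Jf$, setting $\aa=D\psi_t(x,y)$, $\bb=D\psi_t(y,z)$, $\cc=D\psi_t(x,z)$. The first term reproduces the square in \eqref{eq-26a}, and the second term gives the middle sum weighted by $\Jf_y(\XX)-\Jf_x(\XX)$. The one substitution deserving care is the triple sum: using $e^{D\psi_t(x,y)}=g_t(y)/g_t(x)$ one recognises $e^\aa\,\Jf_x(y)=\Af_{t,x}(y)$, the forward jump frequency of $P$, so that $2e^\aa h(\bb)\,\Jf_x(y)\Jf_y(z)=2\,\ts(e^{D\psi_t(y,z)}-1)\,\Af_{t,x}(y)\Jf_y(z)$ is exactly the surviving positive contribution, while $-h(\cc)\,\Jf_x(y)\Jf_y(z)$ yields the negative one. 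This gives \eqref{eq-26a}. The backward identity \eqref{eq-26b} then follows by running the identical computation for the time-reversed measure $R^*$, whose forward generator is $\Lb$ and forward jump kernel is $\Jb$: under time reversal the roles of $f$ and $g$ (equivalently $\varphi$ and $\psi$) are exchanged, so taking $u=\varphi_t$, $J=\Jb$ reproduces \eqref{eq-26b} verbatim with $\Ab_{t,x}(y)=e^{D\varphi_t(x,y)}\Jb_x(y)=\frac{f_t(y)}{f_t(x)}\Jb_x(y)$ in place of $\Af_{t,x}(y)$. The main obstacle is not the algebra --- that is discharged once and for all in Lemma \ref{res-07} --- but verifying that the informal calculus of Claim \ref{resd-02} is rigorous in this setting, i.e.\ that $\psi_t,\varphi_t$ lie in the domain of the relevant composite operators and that $d/dt$ passes through the sums; this is precisely what Lemma \ref{res-04} together with local finiteness provides.
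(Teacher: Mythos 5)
Your proposal is correct and follows essentially the same route as the paper, which presents Proposition \ref{res-08} as a direct specialization of Lemma \ref{res-07} (taking $u=\psi_t$, $J=\Jf$ and $u=\varphi_t$, $J=\Jb$, with $e^{D\psi_t(x,y)}\Jf_x(y)=\Af_{t,x}(y)$ and its backward counterpart). Your additional care about positivity of $f_t,g_t$, local finiteness, and the $t$-regularity supplied by Lemma \ref{res-04} matches the role these hypotheses play in the paper.
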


 \subsection*{Reversible random walks}
 
 Let us take a positive measure  $m=\sum _{ x\in\XX}m_x\, \delta_x\in\MX$ with $m_x>0$ for all $x\in\XX.$ It is easily checked with 
 the detailed balance condition
\begin{equation}\label{eq-07}
m(dx)J_x(dy )=m(dy )J_{y }(dx)
\end{equation}
which characterizes the reversibility of $m$
 that the jump kernel 
\begin{equation}\label{eq-14}
J_x=\sum _{y :x\sim y} s(x,y) \sqrt{m_y/m_x}  \ \delta_y 
\end{equation}
where $s(x,y)=s(y,x)>0$ for all $x\sim y,$ admits $m$ as a reversing measure. As $(\XX,\sim)$ is assumed to be connected, the random walk is irreducible and the reversing measure  is unique  up to a multiplicative constant. 
 
\begin{examples} \label{ex-01}
Let us present the simplest  examples of reversible random walks.
\begin{enumerate}[(a)]
\item
\emph{Reversible counting random walk.} \ 
  The simplest example is provided by the counting  jump kernel 
$     
J_x=\sum _{y :x\sim y} \delta_y \in\PX,
$
$ 
x\in\XX
$	
which admits the counting measure
$	
m=\sum_x \delta_x
$	
as a reversing measure: take $m_x=1$ and $s(x,y)=1$ in \eqref{eq-14}.

\item
\emph{Reversible simple random walk.}\ The simple jump kernel is
$	
J_x=\frac{1}{n_x}\sum _{y :x\sim y} \delta_y \in\PX,
$
$
x\in\XX.
$	
The measure
$	
m=\sum_x n_x \delta_x
$	
is a reversing measure: take  $m_x=n_x$ and $s(x,y)=(n_xn_y)^{ -1/2}$ in \eqref{eq-14}. 
 \end{enumerate}
\end{examples}

The dynamics of $R$ is as follows. Once the walker is at $x$, one starts an exponential random clock with  frequency $J_x(\XX)$. When the clock rings, the walker jumps at random onto a neighbor of $x$ according to the probability law $J_x(\XX) ^{ -1}\sum _{ y:x\sim y}J(x;y)\, \delta _{ y}.$ This procedure goes on and on.
Since $R$ is reversible, we have $\Jf=\Jb=J$ and  the forward and backward frequencies of jumps of the $(f,g)$-transform $P=f_0(X_0)g_1(X_1)\,R$ are respectively 
$$
\Af _x (y)= \frac{g_t(y)}{g_t(x)}J_x(y)\quad \textrm{and}\quad\Ab _x (y)= \frac{f_t(y)}{f_t(x)}J_x(y).
$$

As a direct consequence of Theorem \ref{resd-08} and Proposition \ref{res-08}, we obtain the following result.

\begin{theorem}\label{res-09}
Let $R$ be an $m$-reversible random walk with  jump measure $J$ which is given by \eqref{eq-14} and satisfies
$$\sup _{ x\in\XX}J_x(\XX)< \infty.$$ 
For this condition to be satisfied, it suffices that 
for some $0< c, \sigma< \infty$, we have for all $x\sim y\in\XX$,
\begin{equation}\label{eq-21}
\left\{
\begin{array}{l}
m_y/n_y\le c m_x/n_x,\\
0<s(x,y) \sqrt{n_xn_y}\le \sigma,
\end{array}\right.
\end{equation}
with the notation of \eqref{eq-14}.
\\
Then,
along any entropic interpolation $[ \mu_0,\mu_1]^R$ associated with a couple
 $(f_0, g_1)$ as in Definition \ref{def-fg} and such that $f_0,g_1\in L^1(m)\cap L^2(m)$, we have for all $0<t<1,$
\begin{equation*}
\frac{d}{dt}H(\mu_t|m)= \sum _{ x\in\XX} [\Theta\psi_t - \Theta \varphi_t](x)\,\mu_t(x), 
\qquad
\frac{d^2}{dt^2}H(\mu_t|m)= \sum _{ x\in\XX} [\Theta _2 \psi_t+ \Theta _2 \varphi_t](x)\,\mu_t(x) 
\end{equation*}
where the expressions of $ \Theta\psi_t$ and $ \Theta \varphi_t$ are given at \eqref{eqd-11} and the expressions of $ \Theta_2\psi_t$ and $ \Theta _2\varphi_t$ are given at Proposition \ref{res-08} (drop the useless time arrows).
\end{theorem}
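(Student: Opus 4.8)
The plan is to recognize this theorem as the reversible specialization of Theorem \ref{resd-08} and Proposition \ref{res-08}, so that the only genuine work is (i) verifying that the explicit sufficient condition \eqref{eq-21} forces the uniform boundedness of the total jump rates, and (ii) checking that reversibility collapses the forward and backward data into a single jump kernel $J$, after which the two displayed formulas are read off directly.

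First I would dispose of the boundedness claim. Starting from the reversible jump kernel \eqref{eq-14}, the total jump rate out of $x$ is $J_x(\XX)=\sum_{y:x\sim y} s(x,y)\sqrt{m_y/m_x}$. Using the second line of \eqref{eq-21} to bound $s(x,y)\le \sigma/\sqrt{n_xn_y}$ and the first line to bound $\sqrt{m_y/m_x}\le \sqrt{c}\,\sqrt{n_y}/\sqrt{n_x}$, each summand is at most $\sigma\sqrt{c}/n_x$; summing over the $n_x$ neighbors of $x$ yields $J_x(\XX)\le \sigma\sqrt{c}$ uniformly in $x$. This establishes $\sup_x J_x(\XX)<\infty$, which is precisely the hypothesis required to invoke the preceding results.

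Next I would note that the $m$-reversibility encoded in the detailed balance relation \eqref{eq-07} is the symmetric instance of the general time-reversal duality $m(x)\Jf_x(y)=m(y)\Jb_y(x)$; hence the forward and backward jump kernels coincide, $\Jf=\Jb=J$, and with them so do the derived operators, $\Tf=\Tb=\Theta$ and $\Tf_2=\Tb_2=\Theta_2$. Consequently \eqref{eq-16} reduces to $\sup_x J_x(\XX)<\infty$, already verified in the previous paragraph, and all remaining hypotheses of Theorem \ref{resd-08} and Proposition \ref{res-08} are in force, since $m$ charges every point ($m_x>0$ for all $x$) and $f_0,g_1\in L^1(m)\cap L^2(m)$ by assumption.

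Finally I would apply Theorem \ref{resd-08}, substituting the common kernel $J$ for both $\Jf$ and $\Jb$ and deleting the now-redundant time arrows, so that the first-derivative formula becomes $\sum_x [\Theta\psi_t-\Theta\varphi_t](x)\,\mu_t(x)$ with $\Theta$ given by \eqref{eqd-11}, and the second-derivative formula becomes $\sum_x [\Theta_2\psi_t+\Theta_2\varphi_t](x)\,\mu_t(x)$ with $\Theta_2$ given by Proposition \ref{res-08}. I expect no obstacle beyond the bookkeeping of specialization; the only point requiring any care is the elementary estimate in step (i), where the two inequalities of \eqref{eq-21} must be combined so that the factors $n_x,n_y$ cancel against the count of $n_x$ summands, leaving a bound independent of $x$.
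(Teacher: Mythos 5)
Your proposal is correct and follows exactly the route the paper intends: the paper states Theorem \ref{res-09} as ``a direct consequence of Theorem \ref{resd-08} and Proposition \ref{res-08}'', with reversibility giving $\Jf=\Jb=J$ and hence identical forward and backward operators. Your explicit verification that \eqref{eq-21} yields $J_x(\XX)\le\sigma\sqrt{c}$ (the two bounds combining so the $\sqrt{n_y}$ factors cancel and the $n_x$ summands absorb the remaining $1/n_x$) is the right computation and fills in the one step the paper leaves implicit.
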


\section{Convergence to equilibrium}\label{sec-equilib}

Let $R$ be an $m$-stationary Markov probability measure. In this section $m$ is assumed to  be a probability measure (and so is $R$). We are interested in the convergence as $t$ tends to infinity of the forward heat flow
\begin{equation}\label{eq-43}
\mu_t:=(X_t)\pf (\rho_0(X_0)\,R)\in\PX,\quad t\ge 0,
\end{equation}
where $ \rho_0=d \mu_0/dm$ is the initial density.
If the solution of the forward heat equation $ \left\{ \begin{array}{l}
\partial_t \nu_t=\nu_t\Lf\\
\nu_0= \nu^o
\end{array}\right. $  with the initial profile  $ \nu^o= \mu_0$ is unique, then it is the heat flow \eqref{eq-43}. But in general $(\mu_t) _{ t\ge0}$ is only \emph{a} solution of this heat equation.
We are going to prove by  implementing the ``stochastic process strategy'' of the present paper, that under some hypotheses, the following convergence to equilibrium $$\Lim t \mu_t=m$$ holds.   As usual,  the function $t\mapsto H(\mu_t|m)$ is an efficient Lyapunov function of the dynamics.

\subsection*{Stationary dynamics}

Let us first make the basic assumptions precise.

\begin{assumptions}\label{ass-05}
The stationary measure $m$ is a probability measure and we assume the following.
\begin{enumerate}[(a)]
\item
Brownian diffusion. The forward derivative is $$\Lf=\bbf\cdot\nabla+ \Delta/2$$  on a connected Riemannian manifold $\XX$ without boundary. The initial density $\rho_0:=d\mu_0/dm$  is regular enough for $ \rho_t(z)=d \mu_t/dm(z)$ to be positive and twice differentiable in $t$ and $z$ on $(0,\infty)\times\XX$.
\item
Random walk. The forward derivative is $$\Lf u(x)=\sum _{ y:x\sim y}[u(y)-u(x)]\,\Jf_x(y)$$ on the countable locally finite connected graph $(\XX,\sim)$. We assume that \eqref{eq-16} holds:  $\sup_x\{\Jf_x(\XX)+\Jb_x(\XX)\}< \infty.$
\end{enumerate}
\end{assumptions}

The point in the following Theorem \ref{resd-07} is that no curvature bound such as \eqref{eq-35} below nor reversibility  are required.
We define
\begin{equation*}
\mathcal{I}( \mu|m):=\IX  \Tb(\log \rho)\, d\mu \in [0, \infty],\quad \mu= \rho\,m\in\PX.
\end{equation*}
We have seen at \eqref{eqd-11a} and \eqref{eqd-11}  that $\Tb\ge 0$ (remark that $\ts\ge0$), this shows that $ \mathcal{I}(\mu|m)\ge0.$

\begin{theorem}\label{resd-07}
The Assumptions \ref{ass-05} are supposed to hold.
Then,   $\Lim t \mathcal{I}(\mu_t|m)=0.$
\\
Let us assume in addition that  the initial density $\rho_0:=d\mu_0/dm$ is bounded: $\sup_\XX \rho_0< \infty.$
\\
Then,  $\Lim t H( \mu_t|m)=0$.
\end{theorem}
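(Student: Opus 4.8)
Throughout, the flow \eqref{eq-43} is the marginal flow of the $(f,g)$-transform with $f_0=\rho_0$, $g_1=1$ (Remark \ref{rem-01}-(a)); thus $g_t\equiv 1$, $\psi_t\equiv 0$, and by Theorem \ref{resd-01} one has $\rho_t=f_t$ with $\varphi_t=\log \rho_t$. Since $\psi_t\equiv 0$ forces $\Tf \psi_t=\Tf 0=0$ and hence $\If(t)=0$, Corollary \ref{res-02}-(a) furnishes the dissipation identity
\begin{equation*}
\frac{d}{dt}H(\mu_t|m)=-\Ib(t)=-\IX \Tb(\log \rho_t)\,d\mu_t=-\mathcal{I}(\mu_t|m),\qquad t>0.
\end{equation*}
As $\Tb\ge 0$ (recall $\ts\ge 0$ and \eqref{eqd-11a}), $t\mapsto H(\mu_t|m)$ is nonnegative and nonincreasing, hence converges to some $H_\infty\ge 0$; integrating over $(0,\infty)$ gives the time--integrability
\begin{equation*}
\int_0^\infty \mathcal{I}(\mu_s|m)\,ds=H(\mu_0|m)-H_\infty<\infty .
\end{equation*}
This is the backbone of both assertions.

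\medskip
For the first assertion I would promote this integrability to a genuine limit by means of a one--sided differential inequality. Writing $\Phi(t):=\mathcal{I}(\mu_t|m)=\Ib(t)=\langle \Tb \varphi_t,\mu_t\rangle\ge 0$, the vanishing of $\psi$ also gives $\Tf_2\psi_t=0$, so Claim \ref{resd-02} (made rigorous here by the regularity built into Assumptions \ref{ass-05}; cf.\ Proposition \ref{res-03} and Lemma \ref{res-04}) yields $\Phi'(t)=-\tfrac{d^2}{dt^2}H(\mu_t|m)=-\langle \Tb_2 \varphi_t,\mu_t\rangle$. The plan is to establish a one--sided bound $\langle \Tb_2 \varphi_t,\mu_t\rangle\ge -\kappa\,\langle \Tb \varphi_t,\mu_t\rangle=-\kappa\,\Phi(t)$ for some finite $\kappa$, so that $\Phi'\le \kappa \Phi$ on $(0,\infty)$. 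Granting this, suppose $\Limsup t\,\Phi(t)>0$: integrating $\Phi'\le\kappa\Phi$ backward shows that $\Phi$ then stays above a fixed level on time--intervals of length bounded below, producing infinitely many disjoint intervals each contributing a fixed positive amount to $\int_0^\infty\Phi$, which contradicts the integrability above. Hence $\Lim t \mathcal{I}(\mu_t|m)=0$. \textbf{The main obstacle} is precisely the lower bound on $\langle \Tb_2 \varphi_t,\mu_t\rangle$: in the diffusion case it amounts to a (possibly very negative) lower bound on $\mathrm{Ric}+\nabla^2 V$ read off \eqref{eq-17}--\eqref{eq-17b}, qualitatively much weaker than the positive curvature condition \eqref{eq-35}; in the walk case it is where the uniform jump--rate hypothesis \eqref{eq-16} and the bounds $\rho_t\in L^1(m)\cap L^2(m)$ must be turned, via the explicit three--term formula of Proposition \ref{res-08}, into a control of the negative contributions by a multiple of $\Phi$.

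\medskip
For the second assertion it remains to upgrade $H(\mu_t|m)\to H_\infty$ to $H_\infty=0$ using $M:=\sup_\XX \rho_0<\infty$. First, by $m$--stationarity the density solves $\partial_t\rho_t=\Lb \rho_t$, where $\Lb$ is the $L^2(m)$--adjoint of $\Lf$; since $e^{t\Lb}$ is a Markov semigroup it contracts $L^\infty(m)$, so $\sup_\XX \rho_t\le M$ for all $t$. In particular $\mu_t\le M\,m$, whence $(\mu_t)$ is tight and bounded in $L^2(m)$. Along any $t_n\to\infty$ extract $\mu_{t_n}\rightharpoonup \nu$; by lower semicontinuity of $\mathcal{I}(\cdot|m)$ and the first assertion, $\mathcal{I}(\nu|m)=0$, and since $\Tb\ge 0$ vanishes only at spatially constant arguments on the connected space $\XX$, this forces $\nu=m$. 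Thus $\mu_t\rightharpoonup m$. Finally, the gradient (resp.\ discrete--gradient) control encoded in $\mathcal{I}(\mu_t|m)\to 0$, combined with the uniform bound $\rho_t\le M$ and a Rellich--type compactness, upgrades this to $\rho_t\to 1$ strongly in $L^2(m)$; as $u\mapsto u\log u$ is bounded and continuous on $[0,M]$, one concludes $H(\mu_t|m)=\IX \rho_t\log \rho_t\,dm\to 0$, whence $H_\infty=0$. \textbf{The delicate point} here is this last strong--compactness upgrade, the passage from weak convergence plus vanishing entropy production to convergence of the entropy itself, which is where the boundedness hypothesis $\sup_\XX\rho_0<\infty$ is genuinely used.
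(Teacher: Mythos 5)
Your backbone coincides exactly with the paper's: you view the forward heat flow as the $(f,g)$-transform with $f_0=\rho_0$, $g\equiv1$, so $\psi\equiv0$, $\varphi_t=\log\rho_t$, you get $\frac{d}{dt}H(\mu_t|m)=-\Ib(t)=-\mathcal{I}(\mu_t|m)$ from Corollary \ref{res-02}, and the integrability $\int_0^\infty\mathcal{I}(\mu_s|m)\,ds=H(\mu_0|m)-H_\infty<\infty$, which is precisely \eqref{eq-41}. The genuine gap is in how you extract the first assertion from this. Your route via $\Phi'\le\kappa\Phi$ requires the one-sided bound $\langle\Tb_2\varphi_t,\mu_t\rangle\ge-\kappa\langle\Tb\varphi_t,\mu_t\rangle$, i.e.\ an integrated version of $\Tb_2\ge-\kappa\,\Tb$. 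This is a curvature-type hypothesis that Assumptions \ref{ass-05} do not contain, and the paper states immediately before the theorem that \emph{no curvature bound such as \eqref{eq-35} is required} --- such a bound is exactly the additional input that separates Theorem \ref{res-10} from Theorem \ref{resd-07}, so an argument needing even a one-sided version of it proves a different (weaker-scope) statement. Moreover it is unavailable in the stated generality: in setting (a) the manifold need not be compact and the drift is essentially arbitrary, so by \eqref{eq-17b} the quadratic-form part $\nabla^2V\pm[\nabla+\nabla^*]b_\bot$ (plus $\Ric$ on a manifold) is in general unbounded below, and no finite $\kappa$ exists; in setting (b), \eqref{eq-16} bounds jump rates but gives no sign control on the three-term expression of Proposition \ref{res-08}, and you supply no derivation. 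You flagged this yourself as ``the main obstacle,'' so as written the first assertion is unproved; and since your second assertion invokes the first (to get $\mathcal{I}(\nu|m)=0$ at weak limit points), the gap propagates. The paper's proof takes a hypothesis-free route here: it concludes $\Lim t\mathcal{I}(\mu_t|m)=0$ directly from the convergence of the integral in \eqref{eq-41}, using no second-derivative information whatsoever. (You are right that bare integrability of a nonnegative function does not force a limit, so the paper's step is terse; but the repair cannot be a curvature bound the theorem is designed to avoid.)

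On the second assertion your plan is broadly parallel but uses heavier tools than the paper. Your $L^\infty$ contraction $\rho_t\le M$ is the paper's observation $\rho_t(X_t)=E_R(\rho_0\mid X_t)\le\sup_\XX\rho_0$. But instead of weak compactness, lower semicontinuity of $\mathcal{I}(\cdot|m)$, and a Rellich-type upgrade (each of which you would still have to prove, and the last of which you concede is delicate on a possibly noncompact $\XX$), the paper argues pointwise: in the diffusion case, $\mathcal{I}(\mu_t|m)\to0$ reads $\IX|\nabla\sqrt{\rho_t}|^2\,d\vol\to0$, a Poincar\'e inequality on bounded smooth domains plus continuity and connectedness gives $\rho_t\to1$ everywhere, and dominated convergence (using $\rho_t\le M$ and $m(\XX)=1$) yields $H(\mu_t|m)\to0$; in the walk case, elementary lower bounds on $a\,\ts(b/a-1)$ equalize adjacent values of $\rho_t$, connectedness gives $\rho_t\to1$ everywhere, and dominated convergence concludes. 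On a countable graph your machinery is unnecessary (weak convergence is pointwise convergence of densities there), and in both settings the paper avoids any compactness theorem; in any case your part two only starts once assertion one is secured, which is where the proposal currently fails.
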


\begin{remark}
The total variation norm of a signed measure $\eta$ on $\XX$ is defined by $\| \eta\| _{ \mathrm{TV}}:=|\eta|(\XX)$ and   Csisz\'ar-Kullback-Pinsker inequality is
$\ud\|\mu-m\|_{TV}^2\le	H(\mu|m),$ for any $ \mu\in\PX.$ Therefore we have
$\Lim t\|\mu_t-m\|_{ \mathrm{TV}}= 0.$
\end{remark}

\begin{proof}
The main idea of this proof is to consider the forward heat flow $( \mu_t) _{ t\ge0}$ as an entropic interpolation, see Definition \ref{def-02}, and to apply Claim \ref{resd-04}. It will be seen that the general regularity hypotheses of the theorem ensure that this claim turns into a rigorous statement.

Instead of restricting time to $\ii,$ we allow it to be in $[0,T],$ $T>0,$ and let  $T$ tend to infinity. 
We denote $ \rho_t:= d \mu_t/dm.$
Since $\mu$ is the time-marginal flow of $P=\rho_0(X_0)\,R,$ we have $f_0=\rho_0$ and $g_T=1.$ This implies that $g_t=1$ and $f_t= \rho_t.$ Indeed, $g_t(z)=E_R[g_T(X_T)\mid X_t=z]=E_R(1\mid X_t=z)=1$ and $f_t(z)=E_R[f_0(X_0)\mid X_t=z]=E_R[ \rho_0(X_0)\mid X_t=z]=E_R[ dP/dR\mid X_t=z]=dP_t/dR_t(z)=dP_t/dm(z)=: \rho_t(z).$ Therefore,
$
\psi_t=0, \varphi_t=\log\rho_t,
$
for all $0\le t\le T$.
Our assumptions guarantee that for all $t>0,$ $ \varphi_t$ is regular enough  to use our previous results about ``$ \Theta_2$-calculus''.
Denoting $H(t)=H(\mu_t|m)$, with Corollary \ref{res-02}, we see that 
$$-H'(t)=\Ib(t):= \IX \Tb(\log \rho_t)\, d\mu_t  =:\mathcal{I}(\mu_t|m).$$ 
As $\mathcal{I}(\cdot|m)\ge0$,
$H$ is decreasing. Of course, 
$
H(0)-H(T)=\int_0^T \mathcal{I}(\mu_t|m)\,dt.
$
Now, we let $T$ tend to infinity. As $H$ is non-negative and decreasing, it admits a limit $H(\infty):=\Lim T H(T)$ and 
the integral in
\begin{equation}\label{eq-41}
H(0)-H(\infty)=\int_0^\infty \mathcal{I}(\mu_t|m)\,dt
\end{equation}
is convergent, implying
$	
\Lim t \mathcal{I}(\mu_t|m)=0.
$	

\par\medskip\noindent\emph{Diffusion setting}.\quad
By \eqref{eq-22} with $ \varphi_t= \rho_t,$
this limit implies that $\Lim t \IX |\nabla \sqrt{ \rho_t}|^2 \, d\vol=0.$
By Poincaré's inequality, for any open bounded connected domain $U$ with a smooth boundary, there exists a constant $C_U$ such that     $\|\sqrt{\rho_t}-\langle \sqrt{\rho_t}\rangle_U \|_{L^2(U)}\le C_U\|\nabla \sqrt{\rho_t}\|_{L^2(U)}$ for all $t,$ where $\langle \sqrt{\rho_t}\rangle_U :=\int_U \sqrt{\rho_t}\,d\vol/\vol(U)$. It follows that $\Lim t \|\sqrt{\rho_t}-\langle \sqrt{\rho_t}\rangle_U \|_{L^2(U)}=0$  which in turns implies that $\Lim t \rho_t=c$  everywhere on $\XX$ (recall that $\rho$ is continuous on $(0,\infty)\times \XX$ and $\XX$ is connected) for some constant $c\ge 0.$   Therefore, $\Lim t \rho_t=1$  everywhere. As $ \rho_0$ is assumed to be bounded, for all $t$ we have $ \rho_t(X_t)=E_R( \rho_0\mid X_t)\le \sup \rho_0< \infty$. This allows us to apply Lebesgue dominated convergence theorem to assert that $\Lim t H( \mu_t|m)=0.$
\par\medskip\noindent\emph{Random walk setting}.\quad
Since $\Lim t \mathcal{I}(\mu_t|m)=0$, we obtain
\begin{equation}\label{eqd-15}
\Lim t   \sum _{(x,y):x\sim y} \rho_t(x)\ts\big(\rho_t(y)/\rho_t(x)-1\big)\,m(x)\Jb_x(y)=0.
\end{equation} 
We have $$a\ts(b/a-1)\ge \left\{\begin{array}{lcll}
C\frac{(b-a)^2}{a}&\ge&C(b-a)^2,& \textrm{if }b/a\le 2,a\le1\\
\ts(b/a-1)&\ge&C(b/a-1)^2,&\textrm{if }b/a\le2,a\ge1\\
C\log(b/a),&&&\textrm{if }b/a\ge2,a\ge2,
\end{array}\right.$$
for some constant\footnote{$C=2$ is all right, but we do not seek precision.}  $C>0.$ In these three cases, we see that the convergence  to zero of the left-hand side implies the convergence of $b$ to $a.$ The remaining case when $b/a\ge2$ and $a\le2$ is controlled by considering the symmetric term $b\ts(a/b-1)$ which also appears in the  series  in the limit \eqref{eqd-15}, reverting $x$ and $y$; recall that $\Jb_x(y)>0\Leftrightarrow \Jb_y(x)>0.$ As  $b\ts(a/b-1)\ge b\ge 2a\ge 0,$ the convergence of the left-hand term to zero also implies the convergence of $b$ to $a.$ Therefore, the limit \eqref{eqd-15} implies that for all adjacent $x$ and $y\in\XX,$ $\Lim t \rho_t(x)=\Lim t \rho_t(y).$ Since the graph is connected, we have $\Lim t \rho_t=1$ everywhere. Finally, the estimate $\sup _{ t,x} \rho_t(x)\le \sup_\XX \rho_0< \infty$ allows us to conclude with the dominated convergence theorem.
\end{proof}

\begin{theorem}\label{res-10}

The Assumptions \ref{ass-05} are supposed to hold. Then,  under the additional hypotheses that $H( \mu_0|m)< \infty$ and
\begin{equation}\label{eq-35}
\Tb_2\ge \kappa  \Tb
\end{equation}
for some constant $\kappa >0,$ we have 
\begin{eqnarray}
 \mathcal{I}(\mu_t|m)&\le& \mathcal{I}(\mu_0|m) e ^{ -\kappa t},\quad t\ge0 \label{eq-39a}\\
H( \mu_t|m)&\le& H( \mu_0|m) e ^{ -\kappa t},\quad t\ge0 \label{eq-39b}
\end{eqnarray}
and the following functional inequality
\begin{equation}\label{eq-42}
H( \rho m|m)\le \kappa  ^{ -1} \mathcal{I}( \rho m|m),
\end{equation}
holds for any nonnegative  function $ \rho:\XX\to[0, \infty)$ such that $\IX \rho\, dm=1$, which in the diffusion setting (a) is also assumed to be $ \mathcal{C}^2$-regular.
\end{theorem}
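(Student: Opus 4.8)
The plan is to exploit the interpretation of the forward heat flow $(\mu_t)_{t\ge 0}$ as an entropic interpolation, exactly as in the proof of Theorem \ref{resd-07}, and to feed the curvature bound \eqref{eq-35} into the second-derivative formula of Claim \ref{resd-04}. Recall from that proof that for the heat flow one has $\psi_t=0$ and $\varphi_t=\log\rho_t$, so that $\Tf_2\psi_t=0$, while Corollary \ref{res-02}(a) gives $H'(t)=-\mathcal{I}(\mu_t|m)=-\IX\Tb(\log\rho_t)\,d\mu_t$ and Claim \ref{resd-04} gives $H''(t)=\IX\Tb_2(\log\rho_t)\,d\mu_t$. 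The regularity needed to turn Claim \ref{resd-04} into a rigorous statement is granted by Assumption \ref{ass-05}(a) in the diffusion case and by Lemma \ref{res-04} in the random walk case.

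First I would prove the exponential decay \eqref{eq-39a} of the entropy production. Writing $I(t):=\mathcal{I}(\mu_t|m)=-H'(t)$, the curvature hypothesis \eqref{eq-35} yields $I'(t)=-H''(t)=-\IX\Tb_2(\log\rho_t)\,d\mu_t\le-\kappa\IX\Tb(\log\rho_t)\,d\mu_t=-\kappa I(t)$. Since $I\ge 0$, Gronwall's lemma gives $I(t)\le I(0)e^{-\kappa t}$, which is \eqref{eq-39a}. Next I would deduce the functional inequality \eqref{eq-42} by integrating in time: since $-H'=I\ge 0$ and $H\ge 0$, the limit $H(\infty):=\lim_{t\to\infty}H(\mu_t|m)$ exists, and as in \eqref{eq-41} one has $H(\mu_0|m)-H(\infty)=\int_0^\infty I(t)\,dt\le I(0)\int_0^\infty e^{-\kappa t}\,dt=\kappa^{-1}\mathcal{I}(\mu_0|m)$. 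Provided $H(\infty)=0$, this reads $H(\mu_0|m)\le\kappa^{-1}\mathcal{I}(\mu_0|m)$; as $\mu_0=\rho m$ is an arbitrary admissible initial profile, \eqref{eq-42} follows.

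Finally, \eqref{eq-39b} is obtained by bootstrapping the functional inequality. Applying \eqref{eq-42} to $\mu_t=\rho_t m$ at each time gives $H(\mu_t|m)\le\kappa^{-1}\mathcal{I}(\mu_t|m)=-\kappa^{-1}H'(t)$, that is $H'(t)\le-\kappa H(\mu_t|m)$, and Gronwall's lemma then yields $H(\mu_t|m)\le H(\mu_0|m)e^{-\kappa t}$.

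The main obstacle is the identity $H(\infty)=0$, since here one only assumes $H(\mu_0|m)<\infty$ rather than the boundedness of $\rho_0$ exploited in Theorem \ref{resd-07}. From \eqref{eq-39a} we still have $\mathcal{I}(\mu_t|m)\to 0$, so the argument in the proof of Theorem \ref{resd-07} (Poincaré's inequality in the diffusion case, the pointwise lower bounds on $\theta^*$ in the random walk case) gives $\rho_t\to 1$ everywhere. To pass from this pointwise convergence to $H(\mu_t|m)=\IX\rho_t\log\rho_t\,dm\to 0$ without a uniform sup-bound, I would invoke uniform integrability: since $\rho_t=E_R[\rho_0(X_0)\mid X_t]$ and $x\mapsto x\log x$ is convex, Jensen's inequality gives $\rho_t\log\rho_t\le E_R[\rho_0\log\rho_0\mid X_t]$ pointwise, and the family $\{E_R[\rho_0\log\rho_0\mid X_t]\}_{t\ge 0}$ of conditional expectations of the fixed $L^1(m)$ function $\rho_0\log\rho_0$ is uniformly integrable. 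Together with the uniform lower bound $\rho_t\log\rho_t\ge-1/e$ and the pointwise convergence $\rho_t\log\rho_t\to 0$, Vitali's convergence theorem on the probability space $(\XX,m)$ yields $H(\mu_t|m)\to 0$, which closes the argument.
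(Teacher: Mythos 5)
Your proof is correct, and for \eqref{eq-39a} and \eqref{eq-39b} it coincides with the paper's: Gronwall applied to $\Ib'(t)=-H''(t)\le-\kappa\,\Ib(t)$ (using Claim \ref{resd-04} with $\psi=0$, $\varphi_t=\log\rho_t$), and then the bootstrap $H'(t)\le-\kappa H(t)$ once \eqref{eq-42} is in hand. Where you genuinely diverge is in how you reach \eqref{eq-42}, that is, in proving $H(\infty)=0$ without a sup-bound on $\rho_0$. The paper argues in two stages: it first assumes $\sup_\XX\rho_0<\infty$, so that the second part of Theorem \ref{resd-07} applies verbatim and yields $H(\mu_0|m)=\int_0^\infty\Ib(t)\,dt\le\Ib(0)/\kappa$; it then removes the boundedness by truncation, $\rho_n=(\rho\wedge n)/\IX(\rho\wedge n)\,dm$, combined with Fatou's lemma $H(\rho m|m)\le\liminf_nH(\rho_nm|m)$ --- a step which tacitly requires some control of $\liminf_n\mathcal{I}(\rho_nm|m)$ by $\mathcal{I}(\rho m|m)$ that the paper does not spell out. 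You instead prove $H(\infty)=0$ directly under the sole hypothesis $H(\mu_0|m)<\infty$: conditional Jensen gives $\rho_t\log\rho_t\le E_R[\rho_0\log\rho_0(X_0)\mid X_t]$, the right-hand side is a uniformly integrable family (conditional expectations of a single integrable function), the left-hand side is bounded below by $-1/e$, and Vitali upgrades the pointwise convergence $\rho_t\to1$ to $H(\mu_t|m)\to0$. This is clean and self-contained, it bypasses the truncation and the unstated stability of $\mathcal{I}$ along it, and as a bonus the same uniform-integrability argument applied to $\{\rho_t\}_t$ is exactly what identifies the pointwise limit of $\rho_t$ as the constant $1$ (via $\IX\rho_t\,dm=1$), a point glossed over in the proof of Theorem \ref{resd-07}. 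The one thing the paper's route covers that yours does not is the degenerate instance of \eqref{eq-42} where $H(\rho m|m)=\infty$ (your argument needs $\rho_0\log\rho_0\in L^1(m)$ to get started); that case only matters when $\mathcal{I}(\rho m|m)<\infty$, and if you care about it you can graft the paper's Fatou step onto your conclusion for bounded densities. As stated, your proposal establishes the theorem in all the substantive cases.
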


\begin{proof}[Proof of Theorem \ref{res-10}]

We already saw during the proof of Theorem \ref{resd-07} that our general assumptions allow us to apply  $ \Theta_2$-calculus: Claim \ref{resd-04} is rigorous with $ \varphi_t=\log \rho_t.$ This gives:
$\Ib(t)=-H'(t)= \IX  \Tb(\log \rho_t)\, d\mu_t  $ and $\Ib'(t)=-H''(t)=  -\IX \Tb_2(\log \rho_t)\, d\mu_t.$
The inequality \eqref{eq-35} implies that $\Ib'(t)\le -\kappa \Ib(t),$ for all $t\ge0.$ Integrating leads to $\Ib(t)\le \Ib(0) e ^{ -\kappa t},$ $t\ge0$, which is \eqref{eq-39a}.

Let us assume for a while, in addition to \eqref{eq-35},  that $\sup_\XX \rho_0< \infty$. 
It is proved at Theorem \ref{resd-07}  that under this restriction, $H(\infty)=0.$
  With \eqref{eq-41}  and \eqref{eq-39a}, we see that
$$
H( \mu_0|m)=H(0)=\int_0 ^{ \infty}\Ib(t)\,dt\le\Ib(0) \int_0 ^{ \infty} e ^{ -\kappa t}\,dt
=\Ib(0)/\kappa.
$$
 A standard approximation argument based on the sequence $ \rho_n=( \rho\wedge n)/\IX ( \rho\wedge n)\,dm$  and Fatou's lemma: $H( \rho m|m)\le\liminf_n H( \rho_nm|m)$, allows to extend this inequality to unbounded $ \rho.$ This proves \eqref{eq-42}. 
\\
Plugging $ \mu_t$ instead of $ \mu_0$ into \eqref{eq-42}, one sees that $H(t)\le \Ib(t)/\kappa=-H'(t)/\kappa.$ Integrating leads to $H(t)\le H(0)e ^{ -\kappa t}$, which is \eqref{eq-39b}.
\end{proof}

\begin{remark}[About time reversal] \label{rem-05}
It appears with \eqref{eq-35} that time reversal is tightly related to this convergence to stationarity. Let us propose an informal interpretation of this phenomenon. The forward entropy production $\If(t)$  vanishes along the forward heat flow $( \mu_t)_{ t\ge0}$ since $\If(t)=\IX \Tf \psi_t\, d \mu_t$ and $ \psi=0.$ Similarly $\IX \Tf_2 \psi_t\, d \mu_t=0.$  No work is needed to drift along the heat flow. In order to evaluate the rate of convergence, one must measure the strength of the drift toward equilibrium. To do so ``one has to face the wind'' and measure the work needed to reach $ \mu_0$ when starting from $m$, reversing  time. 
\end{remark}

\begin{remarks}[About \ $\Tb_2\ge \kappa  \Tb$]\ 
\begin{enumerate}[(a)]
\item
In the diffusion setting, \eqref{eq-35} writes 
\begin{equation*}
\Gb_2\ge \kappa  \Gamma.
\end{equation*}
When $\Lf$ is the reversible forward derivative \eqref{eq-11}, it is the standard Bakry-\'Emery curvature condition $ \mathrm{CD}(\kappa, \infty)$, see \cite{BE85,Ba92}. Further detail is given below at Theorem \ref{res-11}.

\item
In the random walk setting, we see with Lemma \ref{res-07} that \eqref{eq-35} writes 
\begin{align}\label{eq-36}
\Big(\sum _{ y:x\sim y} (e^{\aa_y} -1)&\Jb _x (y)\Big)^2+
	\sum _{ y:x\sim y}[\Jb_y(\XX)-\Jb_x(\XX)] h(\aa_y )\Jb _x (y)\hskip 4cm\notag\\
	&\hskip 3cm+\sum _{ (y,z):x\sim y\sim z} [2e^{\aa_y} h(\cc_z-\aa_y )-h(\cc_z )]\Jb_x(y)\Jb_y(z)\\
&\ge \kappa\sum _{ y:x\sim y} h( \aa_y)\,\Jb _{ x}(y)\notag
\end{align}
for  all $x\in\XX$ and any  numbers $ \aa_y, \cc_z\in\RR,$ $(y,z):x\sim y\sim z,$ where $h(a):= \ts(e^a-1)=ae^a-e^a+1,$ $a\in\RR,$ see \eqref{eq-37}.
\item
An inspection of the proof of Theorem \ref{res-10} shows that only the integrated version of \eqref{eq-35}:
\begin{equation*}
\IX \Tb_2( \rho)\,d \mu\ge \kappa \IX \Tb(\rho)\,d \mu,\quad \mu= \rho m\in\PX
\end{equation*}
is sufficient.
\end{enumerate}
\end{remarks}

In the diffusion setting \eqref{eq-46} on $\XX=\Rn$, we know with \eqref{eq-17b} that \eqref{eq-35} becomes
\begin{equation*}
\| \nabla^2 u\|^2_{\mathrm{HS}}+\Big(\nabla^2V+[\nabla + \nabla^*] b _{ \bot}\Big)(\nabla u)
	\ge \kappa |\nabla u|^2
\end{equation*}
 for any sufficiently regular function $u.$  This $ \Gamma_2$-criterion was obtained by Arnold, Carlen and Ju \cite{ACJ08}; see also the paper \cite{HHS05} by Hwang,  Hwang-Ma  and Sheu for a related result. These results are recovered in the recent paper 
\cite{FJ13} by Fontbona and Jourdain who  implement a stochastic process approach, slightly different from the present article's one but where time reversal plays also a crucial role, see Remark \ref{rem-05}.

\subsection*{Reversible dynamics}

More precisely, we are concerned with the following already encountered $m$-reversible generators $L$.
\begin{enumerate}[(a)]
\item
On a Riemannian manifold $\XX$, see \eqref{eq-11}:
\begin{equation}\label{eq-11b}
L=(-\nabla V\cdot \nabla+\Delta)/2
\end{equation}
with the reversing measure $m=e ^{ -V}\vol$.
\item
On a graph $(\XX,\sim)$, see \eqref{eq-14}:
\begin{equation}\label{eq-14b}
Lu(x)=\sum _{ y:x\sim y}[u(y)-u(x)] s(x,y) \sqrt{m_y/m_x} 
\end{equation}
with $s(x,y)=s(y,x)>0$ for all $x\sim y.$ 
\end{enumerate}

Let us recall our hypotheses.

\begin{assumptions}\label{ass-04}
It is required that the reversing measure $m$ is a probability measure.
\\
 The assumptions (a) and (b) below allow us to apply respectively Theorems \ref{resd-06} and  \ref{res-09}.
\begin{enumerate}[(a)]
\item
The Riemannian manifold $\XX$ is  compact connected   without boundary and $L$ is given at \eqref{eq-11b}. We assume that $V:\XX\to\RR$  is $\mathcal{C}^4$-regular and  without loss of generality that $V$ is normalized by an additive constant such that $m=e ^{ -V}\vol$ is a probability measure. 
\item
The countable graph $(\XX,\sim)$ is assumed to be locally finite and connected.  The generator $L$ is given at \eqref{eq-14b} and the equilibrium probability measure $m$ and the function $s$ satisfy \eqref{eq-21}.
\end{enumerate}
\end{assumptions}

Reversibility allows for a  simplified expression of $ \mathcal{I}(\cdot|m).$ Indeed, as
$ \rho \Theta(\log \rho)=L( \rho\log \rho- \rho)-\log \rho\, L \rho ,$ we obtain
\begin{equation*}
\IX \Theta(\log \rho) \rho\, dm
=\IX L( \rho\log \rho- \rho)\,dm-\IX\log \rho\, L \rho\,dm=\ud\IX \Gamma( \rho,\log \rho)\, dm
\end{equation*}
where the last equality follows from the symmetry   of $L$ in $L^2(m)$ (a consequence of the $m$-reversibility of $R),$ whenever $( \rho,\log \rho)\in\dom \Gamma.$ Therefore,
\begin{equation} \label{eq-40}
 \mathcal{I}(\mu|m)=I( \mu|m):=\ud\IX \Gamma( \rho,\log \rho)\, dm.
\end{equation}
It is the Fisher information of $ \mu$ with respect to $m$.
A direct computation shows that
\begin{enumerate}[(a)]
\item
In the diffusion setting (a), 
\begin{equation*}
I( \mu|m)=\ud\IX |\nabla \log \rho|^2 \, d \mu=\ud\IX \frac{|\nabla \rho|^2}{ \rho}\,dm=2\IX | \nabla\sqrt{ \rho}|^2\,dm;
\end{equation*}
\item
 In the random walk setting (b),
\begin{equation*}
I( \mu|m)=\ud \sum _{(x,y):x\sim y}[ \rho(y)- \rho(x)][\log \rho(y)-\log \rho(x)]\, m(x)J_x(y).
\end{equation*}
\end{enumerate}

Theorem \ref{res-10} becomes

\begin{theorem}\label{res-11}
The Assumptions \ref{ass-04} are supposed to hold. Then,  under the additional hypotheses that $H( \mu_0|m)< \infty$ and
\begin{equation}\label{eq-44}
\Theta_2\ge \kappa  \Theta
\end{equation}
for some constant $\kappa >0,$ we have 
\begin{eqnarray*}
I( \mu_t|m)&\le& I( \mu_0|m) e ^{ -\kappa t},\quad t\ge0\\
H( \mu_t|m)&\le& H( \mu_0|m) e ^{ -\kappa t},\quad t\ge0
\end{eqnarray*}
and the following (modified) logarithmic Sobolev inequality
\begin{equation}\label{eq-45}
H( \mu|m)\le \kappa  ^{ -1} I( \mu|m),
\end{equation}
for any $ \mu\in\PX$ which in the diffusion setting (a) is also restricted to be such that $ d \mu/dm$ is $ \mathcal{C}^2$-regular.
\end{theorem}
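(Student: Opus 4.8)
The plan is to obtain Theorem \ref{res-11} as the reversible specialization of Theorem \ref{res-10}, since the forward heat flow $\mu_t$ in \eqref{eq-43} is exactly the object treated there. First I would check that Assumptions \ref{ass-04} imply Assumptions \ref{ass-05}, so that Theorem \ref{res-10} is applicable. In the diffusion case (a), the reversible generator \eqref{eq-11b} has the form $\Lf=-\nabla V/2\cdot\nabla+\Delta/2$ demanded in \ref{ass-05}(a), and compactness of $\XX$ together with $V\in\mathcal{C}^4$ guarantees, via Proposition \ref{res-03} and the positivity-improving, regularizing character of the reversible heat semigroup (Remark \ref{rem-03}), that $\rho_t$ is positive and twice differentiable on $(0,\infty)\times\XX$ for any admissible initial density. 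In the random walk case (b), the generator \eqref{eq-14b} is of the required jump form, and condition \eqref{eq-21} forces $\sup_x J_x(\XX)<\infty$, hence \eqref{eq-16} (recall $\Jf=\Jb=J$ by reversibility); thus \ref{ass-05}(b) holds.

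Next I would record the two identifications that turn the conclusions of Theorem \ref{res-10} into those claimed here. Because $R$ is $m$-reversible we have $\Lb=\Lf=L$, whence $\Gb=\Gf=\Gamma$, $\Tb=\Tf=\Theta$ and $\Tb_2=\Tf_2=\Theta_2$ after dropping the now-superfluous time arrows; consequently the curvature hypothesis \eqref{eq-44} is literally the hypothesis \eqref{eq-35} of Theorem \ref{res-10}. Moreover, the computation \eqref{eq-40} identifies the functional $\mathcal{I}(\cdot|m)$ appearing in Theorem \ref{res-10} with the Fisher information $I(\cdot|m)=\ud\IX\Gamma(\rho,\log\rho)\,dm$, the key ingredient being the symmetry of $L$ in $L^2(m)$; this identity is valid whenever $(\rho,\log\rho)\in\dom\Gamma$, which is ensured by the $\mathcal{C}^2$-regularity imposed in the diffusion case and is automatic on the graph.

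With these two observations in place, the three assertions of Theorem \ref{res-11} are immediate restatements of \eqref{eq-39a}, \eqref{eq-39b} and \eqref{eq-42}: the exponential decays of $I(\mu_t|m)$ and $H(\mu_t|m)$ and the modified logarithmic Sobolev inequality \eqref{eq-45} follow by substituting $\mathcal{I}=I$ throughout. The $\mathcal{C}^2$-restriction on $d\mu/dm$ in the diffusion setting is exactly the one carried over from Theorem \ref{res-10}, and the approximation argument extending \eqref{eq-42} to unbounded densities is already contained in that theorem's proof.

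The proof is thus essentially bookkeeping, and the single point that genuinely requires care is the regularity verification of the first paragraph. Concretely, the expected main obstacle is confirming in the diffusion case that the reversible heat flow, started from an initial density that may be merely integrable, instantaneously becomes a strictly positive $\mathcal{C}^{1,2}$ density on $(0,\infty)\times\XX$, so that the $\Theta_2$-calculus underlying Theorem \ref{res-10} is rigorously licensed with $\varphi_t=\log\rho_t$. This is supplied by the hypoelliptic smoothing and strict positivity of the heat kernel on the compact manifold, as already exploited in Remark \ref{rem-03} and Proposition \ref{res-03}.
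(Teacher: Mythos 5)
Your proposal is correct and matches the paper's own treatment: the paper derives the identity $\mathcal{I}(\mu|m)=I(\mu|m)=\ud\IX\Gamma(\rho,\log\rho)\,dm$ from reversibility (equation \eqref{eq-40}) and then presents Theorem \ref{res-11} explicitly as what ``Theorem \ref{res-10} becomes'' once the time arrows are dropped and \eqref{eq-35} is read as \eqref{eq-44}. Your additional verification that Assumptions \ref{ass-04} imply Assumptions \ref{ass-05} (via Proposition \ref{res-03} in the diffusion case and via \eqref{eq-21} forcing \eqref{eq-16} in the graph case) is left implicit in the paper but is the right bookkeeping.
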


In the diffusion setting (a), Theorem \ref{res-11}  is covered by  the well-known result by Bakry and \'Emery \cite{BE85}. Inequality \eqref{eq-45} is the standard logarithmic Sobolev inequality and \eqref{eq-44} is the usual  $ \Gamma_2$-criterion: $ \Gamma_2\ge \kappa \Gamma.$ In the random walk setting (b), inequality \eqref{eq-45} is a modified logarithmic Sobolev inequality which was introduced by Bobkov and Tetali  \cite{BT06} in a general setting and was already employed by Dai Pra, Paganoni and Posta  \cite{DPPP02} in the context of Gibbs measures on $ \mathbb{Z}^d.$ Later Caputo, Dai Pra and Posta
\cite{CP07,CDPP09,DPP13} have derived explicit criteria  for \eqref{eq-45} to be satisfied in specific settings related to Gibbs measures of particle systems. These criteria are  close in spirit to \eqref{eq-44}.

\section{Some open questions about curvature and entropic interpolations}\label{sec-questions}

We have  seen at Section \ref{sec-equilib} that convergence to equilibrium is obtained by considering heat flows. The latter  are the simplest entropic interpolations since $f=1$ or $g=1$. In the present section, general  entropic interpolations are needed to explore curvature properties of Markov generators  and their underlying state space $\XX$ by means of the main result of the article:
\begin{equation*}
H'(t)= \IX \ud (\Tf \psi_t-\Tb \varphi_t )\, d\mu_t,
\quad
H''(t)= \IX\ud(\Tf_2 \psi_t+\Tb_2 \varphi_t)\,d \mu_t.
\end{equation*} 

Convexity properties of the entropy along displacement interpolations are fundamental for  the Lott-Sturm-Villani (LSV) theory of lower bounded curvature of metric measure spaces \cite{St06a,St06b,LV09,Vill09}. 
An alternate  approach  would be to  replace displacement interpolations by entropic interpolations, taking advantage of the analogy between  these two types of interpolations.  Although this program is interesting in itself, it can be further motivated by  remarking the following facts. 
\begin{enumerate}
\item
Entropic interpolations are  more regular than displacement interpolations.
\item
Displacement interpolations are (semiclassical) limits of entropic interpolations, see \cite{Leo12a,Leo12c}.
\item
Entropic interpolations work equally well in continuous and discrete settings while LSV theory fails in the discrete setting (see below).
\end{enumerate}

As regards (1), note that the dynamics of an entropic interpolation shares the regularity of the solutions $f$ and $g$ of the backward and forward ``heat equations'' \eqref{eq-27}. Their logarithms, the Schrödinger potentials $ \varphi$ and $ \psi$, solve \emph{second order} Hamilton-Jacobi-Bellman equations.  This is in contrast with a displacement interpolation whose dynamics is driven by the Kantorovich potentials (analogues of $ \varphi$ and $ \psi$) which are  solutions of \emph{first order} Hamilton-Jacobi equations. Entropic interpolations are linked to regularizing dissipative PDEs, while displacement interpolations are linked to transport PDEs. More detail about these relations is given in \cite{Leo12e}. 

In practice, once the regularity of the solutions of the dissipative linear PDEs \eqref{eq-27} is ensured, the rules of calculus that are displayed at the beginning of Section \ref{sec-second} are efficient. Again, this is in contrast with Otto calculus which only yields heuristics about displacement interpolations.

As regards (3), recall that we have seen at Theorems \ref{res-10} and \ref{res-11} 
unified proofs of entropy-entropy production inequalities, available   in the diffusion and random walk settings.

In order to provide a better understanding of the analogy between displacement and entropic interpolations, we start describing two thought experiments. Then, keeping the LSV strategy as a guideline,  we raise a few open questions about the use of entropic interpolations in connection with curvature problems on Riemannian manifolds and graphs.

\subsection*{Thought experiments}

Let us describe  two thought experiments.
\begin{itemize}
\item \emph{Schrödinger's hot gas experiment}.\ 
The dynamical Schrödinger problem (see \eqref{Sdyn} at Section \ref{sec-inter}) is a formalization of Schrödinger's  thought experiment which was introduced in  \cite{Sch31}. Ask a perfect gas of  particles living  in a Riemannian manifold $\XX$ which are in contact with a \emph{thermal bath}   to start from a configuration profile $ \mu_0\in\PX$ at time $t=0$ and to end up at the unlikely profile $ \mu_1\in\PX$ at time $t=1.$ For instance the gas may be constituted of undistinguishable mesoscopic particles (e.g.\ grains of pollen) with a random motion communicated by  numerous shocks with the microscopic particles  of the thermal bath (e.g.\ molecules of hot water). Large deviation considerations lead to the following conclusion. In the (thermodynamical) limit of infinitely many mesoscopic particles, the most likely trajectory of the whole mesoscopic particle system  from $ \mu_0$ to $ \mu_1$ is the $R$-entropic interpolation $[ \mu_0,\mu_1]^R$ where $R$ describes the random motion of the non-interacting  (the gas is assumed to be perfect) mesoscopic particles. Typically, $R$ describes a Brownian motion as in Schrödinger's original papers \cite{Sch31,Sch32}. For more detail see \cite[\S 6]{Leo12e} for instance.
\item
\emph{Cold gas experiment}.\ 
The same thought experiment is also described in \cite[pp.\,445-446]{Vill09} in a slightly different setting where it is called the \emph{lazy gas experiment}. The only difference with Schrödinger's thought experiment is that no thermal bath enters the game.  The perfect gas is cold so that the paths of the particles are Riemannian  geodesics in order to minimize the average kinetic action of the whole system:  entropy minimization is replaced by quadratic optimal transport. 
\end{itemize}

\subsubsection*{Connection between entropic and displacement interpolations} It is shown in \cite{Leo12a,Leo12c} that displacement interpolations are  limits of entropic interpolations when the frequency of the random elementary motions encrypted by $R$ tends down to zero. With the above thought experiment in mind, this corresponds to a thermal bath whose density tends down to zero. Replacing the hot water by an increasingly rarefied gas, at the zero-temperature limit,  the entropic interpolation $[ \mu_0,\mu_1]^R$ is replaced by McCann's displacement interpolation $[ \mu_0,\mu_1] ^{ \textrm{disp}}$.

\subsubsection*{Cold  gas experiment }
As the gas is (absolutely) cold, the particle sample paths are regular and deterministic.
The trajectory of the mass distribution of the cold gas is the displacement interpolation $[ \mu_0,\mu_1]^{ \textrm{disp}}$ which is the time marginal flow of a probability measure $P$ on $\OO$ concentrated on geodesics:
\begin{equation}\label{eq-04}
P=\IXX \delta _{ \gamma ^{ xy}}\, \pi(dxdy)\in\PO
\end{equation}
where $ \pi\in\PXX$ is an optimal transport plan between $ \mu_0$ and $ \mu_1$, see \cite{Leo12e}. Therefore,
\begin{equation}\label{eq-04b}
\mu_t=P_t=\IXX \delta _{ \gamma ^{ xy}_t}\, \pi(dxdy)\in\PX,\quad 0\le t\le1,
\end{equation}
where for simplicity it is assumed that there is a unique minimizing  geodesic $ \gamma ^{ xy}$ between $x$ and $y$ (otherwise, one is allowed to replace $ \delta _{ \gamma ^{ xy}}$ with any probability concentrated on the set of all minimizing geodesics from $x$ to $y$).

In a  positively curved manifold, several geodesics starting from the same point have a tendency to approach each other. Therefore, it is necessary that the gas initially spreads out to thwart its tendency to concentrate. Otherwise, it would not be possible to reach a  largely spread target distribution.
The left side of Figure 1 below (taken from \cite{Vill09}) depicts this phenomenon. This is also the case of the right side as one can be convinced of by reversing time. The central part is obtained by interpolating between the initial and final segments of the geodesics.  
\begin{center}
\scalebox{0.8} 
{
\begin{pspicture}(0,-3.7578413)(10.148961,3.7278414)
\definecolor{color109b}{rgb}{0.0,0.4,1.0}
\definecolor{color594b}{rgb}{0.0,0.6,1.0}
\definecolor{color20b}{rgb}{0.0,0.2,0.8}
\psbezier[linewidth=0.04,fillstyle=solid,fillcolor=color109b](9.14,2.1895804)(8.151038,2.3377533)(7.4420795,0.18333259)(7.8,-0.7504195)(8.157921,-1.6841716)(9.22,-2.1904194)(9.5,-1.5704195)(9.78,-0.9504195)(8.527898,-0.7441046)(8.34,0.2095805)(8.152102,1.1632656)(10.128962,2.0414078)(9.14,2.1895804)
\psbezier[linewidth=0.04,fillstyle=solid,fillcolor=color594b](4.2837486,3.131174)(3.5237472,3.0527675)(3.3574972,1.131808)(3.309997,0.661369)(3.262497,0.19092995)(3.12,-3.1110413)(4.64,-3.1609483)(6.16,-3.2108552)(5.376203,-1.3818146)(4.56872,-0.5470155)(3.7612367,0.28778362)(4.164978,1.2617159)(4.4974713,1.7586201)(4.829964,2.2555244)(5.0437503,3.2095804)(4.2837486,3.131174)
\psellipse[linewidth=0.04,dimen=outer,fillstyle=solid,fillcolor=color20b](0.56,-0.1304195)(0.56,1.62)
\psbezier[linewidth=0.02,arrowsize=0.093cm 2.0,arrowlength=1.4,arrowinset=0.4,dotsize=0.07055555cm 2.0]{*->}(0.58,1.1695805)(2.2558823,3.2095804)(6.9237494,3.7178414)(8.72,1.7295805)
\psbezier[linewidth=0.02,arrowsize=0.093cm 2.0,arrowlength=1.4,arrowinset=0.4,dotsize=0.07055555cm 2.0]{*->}(0.52,0.2695805)(3.6986363,1.0295805)(5.7175,0.9155805)(8.08,0.3455805)
\psbezier[linewidth=0.02,arrowsize=0.093cm 2.0,arrowlength=1.4,arrowinset=0.4,dotsize=0.07055555cm 2.0]{*->}(0.52,-0.2304195)(1.6704421,-1.5511296)(6.536667,-1.7304195)(8.12,-0.55491656)
\psbezier[linewidth=0.02,arrowsize=0.093cm 2.0,arrowlength=1.4,arrowinset=0.4,dotsize=0.07055555cm 2.0]{*->}(0.56,-1.4839579)(3.2461472,-3.1904194)(7.42,-3.1504195)(9.06,-1.3104194)
\psdots[dotsize=0.12](4.42,2.9295805)
\psdots[dotsize=0.12](3.7,0.7695805)
\psdots[dotsize=0.12](4.28,-1.3704195)
\psdots[dotsize=0.12](4.66,-2.7304194)
\psdots[dotsize=0.12](9.06,-1.3304195)
\psdots[dotsize=0.12](8.12,-0.53041947)
\psdots[dotsize=0.12](8.1,0.3895805)
\psdots[dotsize=0.12](8.7,1.7095805)
\usefont{T1}{ptm}{m}{n}
\rput(0.46145508,-3.5254195){$t=0$}
\usefont{T1}{ptm}{m}{n}
\rput(4.631455,-3.5854194){$0<t<1$}
\usefont{T1}{ptm}{m}{n}
\rput(9.061455,-3.5654194){$t=1$}
\end{pspicture} 
}
\end{center}
\medskip
\begin{center}
Figure 1.\quad
\emph{The cold gas experiment.  Positive curvature}
\end{center}

On the other hand, in a negatively curved manifold, several geodesics starting from the same point have a tendency to depart form each other. Therefore, it is necessary that the gas initially concentrates to thwart its tendency to spread out. Otherwise, it would not be possible to reach a  condensed target distribution.

\begin{center}
\scalebox{0.8} 
{
\begin{pspicture}(0,-2.5377972)(10.148961,2.5177972)
\definecolor{color385b}{rgb}{0.0,0.4,0.8}
\definecolor{color388b}{rgb}{0.0,0.8,1.0}
\definecolor{color391b}{rgb}{0.0,0.6,1.0}
\psbezier[linewidth=0.04,fillstyle=solid,fillcolor=color385b](4.76,0.37983638)(4.58,0.66962457)(4.1620793,-0.069495946)(4.2707467,-0.27570528)(4.379414,-0.48191458)(4.7213593,-0.5903754)(4.800692,-0.38691857)(4.8800244,-0.18346176)(4.8395977,-0.2576977)(4.64,-0.0065479106)(4.440402,0.24460188)(4.94,0.090048164)(4.76,0.37983638)
\psbezier[linewidth=0.04,fillstyle=solid,fillcolor=color388b](9.14,2.3496246)(8.151038,2.4977973)(7.4420795,0.34337667)(7.8,-0.5903754)(8.157921,-1.5241275)(9.22,-2.0303755)(9.5,-1.4103754)(9.78,-0.7903754)(8.527898,-0.5840605)(8.34,0.36962458)(8.152102,1.3233097)(10.128962,2.2014518)(9.14,2.3496246)
\psellipse[linewidth=0.04,dimen=outer,fillstyle=solid,fillcolor=color391b](0.56,0.029624587)(0.56,1.62)
\psbezier[linewidth=0.02,arrowsize=0.093cm 2.0,arrowlength=1.4,arrowinset=0.4,dotsize=0.07055555cm 2.0]{*->}(0.6,1.0496246)(2.76,0.14962459)(5.4,-0.37037542)(8.66,1.8706614)
\psbezier[linewidth=0.02,arrowsize=0.093cm 2.0,arrowlength=1.4,arrowinset=0.4,dotsize=0.07055555cm 2.0]{*->}(0.52,-0.07037541)(0.7,0.08962459)(7.4,-0.090375416)(8.12,-0.39487246)
\psbezier[linewidth=0.02,arrowsize=0.093cm 2.0,arrowlength=1.4,arrowinset=0.4,dotsize=0.07055555cm 2.0]{*->}(0.56,-1.3239138)(3.0,0.7696246)(8.16,-1.0303754)(9.06,-1.1503754)
\psdots[dotsize=0.12](9.06,-1.1703755)
\psdots[dotsize=0.12](8.12,-0.37037542)
\psdots[dotsize=0.12](8.1,0.54962456)
\psdots[dotsize=0.12](8.7,1.8696246)
\usefont{T1}{ptm}{m}{n}
\rput(0.4814551,-2.3653755){$t=0$}
\usefont{T1}{ptm}{m}{n}
\rput(4.2914553,-2.3653755){$0<t<1$}
\usefont{T1}{ptm}{m}{n}
\rput(8.961455,-2.3453753){$t=1$}
\psbezier[linewidth=0.02,arrowsize=0.093cm 2.0,arrowlength=1.4,arrowinset=0.4,dotsize=0.07055555cm 2.0]{*->}(0.52,0.22962458)(2.02,-0.07037541)(5.5,-0.090375416)(8.02,0.5296246)
\psdots[dotsize=0.12](4.62,0.2896246)
\psdots[dotsize=0.12](4.48,0.06962459)
\psdots[dotsize=0.12](4.54,-0.07037541)
\psdots[dotsize=0.12](4.52,-0.3303754)
\end{pspicture} 
}
\end{center}
\medskip
\begin{center}
Figure 2.\quad
\emph{The cold gas experiment.  Negative curvature}
\end{center}

\subsubsection*{Schrödinger's hot
 gas experiment}

As the gas is hot, the particle sample paths are irregular and random.
The trajectory of the mass distribution of the hot gas is the entropic interpolation $[ \mu_0,\mu_1]^R.$
An  $R$-entropic interpolation is the time marginal flow of a probability measure $P$ on $\OO$ which is a mixture of bridges of $R$: 
\begin{equation}\label{eq-05}
P=\IXX R ^{ xy}\, \pi(dxdy)\in\PO
\end{equation}
where $ \pi\in\PXX$ is the unique minimizer of $ \eta\mapsto H( \eta| R _{ 01})$ among all $\eta\in\PXX$ with prescribed marginals $ \mu_0$ and $ \mu_1$, see \cite{Leo12e}. Therefore, 
\begin{equation} \label{eq-05b}
 \mu_t=P_t=\IXX R ^{ xy}_t\, \pi(dxdy)\in\PX,\quad 0\le t\le1.
\end{equation}
Comparing \eqref{eq-04} and \eqref{eq-05}, one sees that the deterministic evolution $ \delta _{ \gamma ^{ xy}}$ is replaced by the random evolution $R ^{ xy}.$
\begin{center}
\scalebox{0.8} 
{
\begin{pspicture}(0,-1.8938416)(10.082911,1.8588417)
\definecolor{color612f}{rgb}{0.4,0.4,0.4}
\psdots[dotsize=0.12](9.201015,0.5735802)
\psbezier[linewidth=0.01,fillstyle=gradient,gradlines=2000,gradbegin=white,gradend=color612f,gradmidpoint=0.49,gradangle=13.5,linestyle=dotted,dotsep=0.16cm](0.8810156,-0.8464198)(0.8810156,-1.2664198)(2.8241029,-1.1864197)(3.463266,-1.164642)(4.102429,-1.1428642)(6.187588,-0.7942931)(7.0210156,-0.4895309)(7.854443,-0.18476868)(9.261016,0.23358022)(9.241015,0.5135802)(9.221016,0.79358023)(8.941756,0.87269133)(7.81583,1.0131358)(6.6899047,1.1535802)(2.8610156,0.5535802)(2.2010157,0.23358022)(1.5410156,-0.08641978)(0.8810156,-0.4264198)(0.8810156,-0.8464198)
\psbezier[linewidth=0.02,arrowsize=0.093cm 2.0,arrowlength=1.4,arrowinset=0.4,dotsize=0.07055555cm 2.0]{*->}(0.90101564,-0.8064198)(4.237129,-0.18641979)(4.1210155,0.31358021)(9.1610155,0.5535802)
\usefont{T1}{ptm}{m}{n}
\rput(0.2324707,-0.6014198){$x$}
\usefont{T1}{ptm}{m}{n}
\rput(9.77247,0.6785802){$y$}
\usefont{T1}{ptm}{m}{n}
\rput(3.2024708,1.5985802){$\gamma^{xy}$}
\usefont{T1}{ptm}{m}{n}
\rput(6.642471,-0.9814198){$R^{xy}$}
\psbezier[linewidth=0.01,arrowsize=0.093cm 2.0,arrowlength=1.4,arrowinset=0.4]{->}(3.6810157,1.5324552)(4.021568,1.571386)(3.9464731,1.537582)(4.4440575,1.6957119)(4.9416413,1.8538417)(5.4010158,1.0956216)(5.2610154,0.27358022)
\psdots[dotsize=0.12](9.181016,0.5535802)
\usefont{T1}{ptm}{m}{n}
\rput(0.9224707,-1.7214198){$t=0$}
\usefont{T1}{ptm}{m}{n}
\rput(9.22247,-1.7214198){$t=1$}
\end{pspicture} 
}
\end{center}
\medskip
\begin{center}
Figure 3.\quad
\emph{Hot gas experiment with $ \mu_0= \delta_x$ and $ \mu_1= \delta_y$}
\end{center}
The grey leaf in this figure is a symbol for the (say) 95\%-support of the bridge $R ^{ xy}.$ It spreads around the geodesic $ \gamma ^{ xy}$ and in general the support of  $R ^{ xy}_t$ for each intermediate time $0<t<1$ is the whole space $\XX$. At a lower temperature, $R ^{ xy}$ is closer to $ \delta _{ \gamma ^{ xy}}:$
\begin{center}
\scalebox{0.8} 
{
\begin{pspicture}(0,-1.4638417)(9.86291,1.4288416)
\definecolor{color467f}{rgb}{0.4,0.4,0.4}
\psbezier[linewidth=0.01,fillstyle=gradient,gradlines=2000,gradbegin=white,gradend=color467f,gradmidpoint=0.49,gradangle=13.500004,linestyle=dotted,dotsep=0.16cm](0.5410156,-0.6764198)(0.5200018,-0.83641976)(1.2810156,-0.71641976)(2.8210156,-0.31641978)(4.361016,0.08358022)(5.4010158,0.16358022)(5.9210157,0.2835802)(6.4410157,0.40358022)(8.941015,0.5235802)(8.941015,0.6835802)(8.941015,0.84358025)(8.321015,0.78358024)(6.801016,0.7435802)(5.2810154,0.7035802)(3.5010157,0.24358022)(2.2010157,-0.15641978)(0.90101564,-0.5564198)(0.5620294,-0.51641977)(0.5410156,-0.6764198)
\psbezier[linewidth=0.02,arrowsize=0.093cm 2.0,arrowlength=1.4,arrowinset=0.4,dotsize=0.07055555cm 2.0]{*->}(0.58101565,-0.6764198)(3.9171293,-0.056419782)(3.8010156,0.4435802)(8.841016,0.6835802)
\usefont{T1}{ptm}{m}{n}
\rput(0.2324707,-0.41141978){$x$}
\usefont{T1}{ptm}{m}{n}
\rput(9.552471,0.8085802){$y$}
\usefont{T1}{ptm}{m}{n}
\rput(4.202471,1.2085803){$\gamma^{xy}$}
\usefont{T1}{ptm}{m}{n}
\rput(5.622471,-0.45141977){$R^{xy}$}
\psbezier[linewidth=0.01,arrowsize=0.093cm 2.0,arrowlength=1.4,arrowinset=0.4]{->}(4.5610156,1.1224554)(4.901568,1.1613861)(5.186473,1.147582)(5.324057,1.2857119)(5.461642,1.4238417)(5.7810154,1.2856216)(5.62758,0.4435802)
\psdots[dotsize=0.12](8.841016,0.6835802)
\usefont{T1}{ptm}{m}{n}
\rput(0.6024707,-1.2914197){$t=0$}
\usefont{T1}{ptm}{m}{n}
\rput(8.882471,-1.1714197){$t=1$}
\end{pspicture} 
}
\end{center}
\medskip
\begin{center}
Figure 4.\quad
\emph{Hot gas experiment with $ \mu_0= \delta_x$ and $ \mu_1= \delta_y$.\\ At a lower temperature}
\end{center}
And eventually, at zero temperature $R ^{ xy}$ must be replaced by its determinestic limit $ \delta _{ \gamma ^{ xy}}.$
The superposition of Figures 2 and 3 describes the hot
 gas experiment in a negative curvature manifold:
\begin{center}
\scalebox{0.8} 
{
\begin{pspicture}(0,-2.6237109)(10.2089615,2.588711)
\definecolor{color435b}{rgb}{0.8,0.8,0.8}
\definecolor{color438b}{rgb}{0.0,0.4,0.8}
\definecolor{color441b}{rgb}{0.0,0.8,1.0}
\definecolor{color444b}{rgb}{0.0,0.6,1.0}
\psbezier[linewidth=0.01,fillstyle=solid,fillcolor=color435b](0.6438632,1.623711)(0.22615153,1.523711)(0.0,0.56022984)(0.08470532,-0.43628907)(0.16941065,-1.432808)(0.2976459,-1.6772738)(0.6841046,-1.736289)(1.0705633,-1.7953043)(1.1382631,-0.66546893)(2.1168153,-0.41628906)(3.095368,-0.16710919)(3.6944664,-1.1572857)(4.656953,-1.2962891)(5.6194396,-1.4352924)(6.0315127,-0.1289318)(6.7297053,-0.63628906)(7.4278984,-1.1436464)(8.76,-1.876289)(8.96,-1.876289)(9.16,-1.876289)(9.486873,-1.9020104)(9.58,-1.4162891)(9.673127,-0.9305677)(8.590455,-0.43492073)(8.56,0.38371095)(8.529545,1.2023426)(9.44,1.623711)(9.48,2.103711)(9.52,2.583711)(8.12,2.3837109)(6.8922744,0.84371096)(5.6645484,-0.69628906)(5.6426334,1.0640366)(4.636632,0.92371094)(3.6306303,0.7833853)(3.149923,0.34356624)(2.0964944,0.5037109)(1.0430654,0.6638557)(1.0615748,1.7237109)(0.6438632,1.623711)
\psbezier[linewidth=0.04,linestyle=dashed,dash=0.16cm 0.16cm,fillcolor=color438b](4.82,0.31392273)(4.64,0.60371095)(4.2220798,-0.1354096)(4.3307467,-0.34161893)(4.439414,-0.54782826)(4.781359,-0.65628904)(4.860692,-0.45283222)(4.9400244,-0.2493754)(4.8995976,-0.32361135)(4.7,-0.07246156)(4.500402,0.17868823)(5.0,0.024134517)(4.82,0.31392273)
\psbezier[linewidth=0.04,fillstyle=solid,fillcolor=color441b](9.2,2.283711)(8.211039,2.4318836)(7.5020795,0.27746302)(7.86,-0.65628904)(8.21792,-1.5900412)(9.28,-2.0962892)(9.56,-1.476289)(9.84,-0.8562891)(8.587897,-0.64997417)(8.4,0.30371094)(8.212102,1.257396)(10.188961,2.1355383)(9.2,2.283711)
\psellipse[linewidth=0.04,dimen=outer,fillstyle=solid,fillcolor=color444b](0.62,-0.036289062)(0.56,1.62)
\usefont{T1}{ptm}{m}{n}
\rput(0.5414551,-2.4512892){$t=0$}
\usefont{T1}{ptm}{m}{n}
\rput(9.081455,-2.411289){$t=1$}
\end{pspicture} 
}
\end{center}
\medskip
\begin{center}
Figure 5.\quad
\emph{The hot gas experiment in a negative curvature manifold}
\end{center}
Figures 3, 4 and 5 are only suggestive pictures that describe  most visited space areas.

\subsection*{Open questions}

As far as one is interested in the curvature of the metric measure space $(\XX,d,m)$, rather than in the curvature of some Markov generator $\Lf$ such as in Section  \ref{sec-equilib}, it seems natural to restrict our attention to a reversible reference path measure $R.$ For instance when looking at a Riemannian manifold $\XX$ considered as a metric measure space $(\XX,d,m)$ with the Riemannian metric $d$ and the weighted measure $m=e ^{ -V}\vol,$ the efficient choice as regards  LSV theory  is to take $R$ as the reversible Markov measure with generator $L=( \Delta-\nabla V\cdot\nabla)/2$ (or any positive multiple) and initial measure $m$.

A representative result in the case where the reference measure $m$ is the volume measure ($V=0$) is that relative entropy is convex along any displacement interpolation in a Riemannian manifold with a nonnegative Ricci curvature. Based on  McCann's seminal work  \cite{McC94,McC97} and Otto's heuristic calculus, Otto and Villani have conjectured this result in  \cite{OV00}. This was  proved by Cordero-Erausquin, McCann and Schmuckenschläger in \cite{CMS01}. The converse is also true, as was shown later by Sturm and von Renesse \cite{SvR05}.

In the same spirit, an immediate consequence of Theorem \ref{resd-06} is the following

\begin{corollary} \label{res-13}
Let $R^o$ be the reversible Brownian motion on a  connected Riemannian manifold $\XX$ without boundary. Then, along any entropic interpolation $[ \mu_0, \mu_1] ^{ R^o}$ associated with $R^o$ and $f_0,g_1\in L^2(\vol)$ such that \eqref{eq-13} holds, the entropy $t\in[0,1]\mapsto H( \mu_t|\vol)\in\RR$ is a convex function.
\end{corollary}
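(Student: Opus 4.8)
The plan is to prove convexity of $t \mapsto H(t) := H(\mu_t|\vol)$ by showing $H''(t)\ge 0$ on the open interval $(0,1)$ and then extending the conclusion to $[0,1]$ by continuity. The reversible Brownian motion $R^o$ is exactly the case $V=0$ of the setting of Theorem \ref{resd-06}, so that the reference measure is $m=\vol$ and the generator is $L=\Delta/2$. First I would invoke the second-derivative formula of that theorem, which here reads
$$ H''(t) = \left\langle \ud\big(\Gamma_2(\psi_t) + \Gamma_2(\varphi_t)\big),\, \mu_t \right\rangle, \qquad 0 < t < 1, $$
where $\psi_t=\log g_t$ and $\varphi_t=\log f_t$ are the Schr\"odinger potentials and $\Gamma_2$ is the iterated carr\'e du champ of $L$.

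The key step is then the pointwise sign of $\Gamma_2$. By Bochner's formula \eqref{eq-17} specialized to $V=0$, one has $\Gamma_2(u)=\|\nabla^2 u\|^2_{\mathrm{HS}}+\mathrm{Ric}(\nabla u)$ for every regular $u$. Since the Hilbert--Schmidt term is always nonnegative, convexity will hold \emph{precisely because}, under the nonnegative Ricci curvature hypothesis relevant here (the one recalled in the paragraph preceding the corollary), the whole expression is nonnegative: $\Gamma_2(u)\ge 0$ pointwise, and in particular $\Gamma_2(\psi_t)\ge 0$ and $\Gamma_2(\varphi_t)\ge 0$. As $\mu_t$ is a positive measure, the integrand $\ud(\Gamma_2(\psi_t)+\Gamma_2(\varphi_t))$ is nonnegative and hence $H''(t)\ge 0$ for all $t\in(0,1)$, so $H$ is convex on $(0,1)$. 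Finiteness of $H$ at the two endpoints, together with the continuity of $t\mapsto H(t)$ there — which follows from $f_0,g_1\in L^2(\vol)$ and the integrability condition \eqref{eq-13} via the representation $H(t)=\langle \varphi_t+\psi_t,\mu_t\rangle$ used in the proof of Proposition \ref{resd-03} — then upgrades convexity from $(0,1)$ to the closed interval $[0,1]$.

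The main obstacle is that Theorem \ref{resd-06} is stated for a \emph{compact} manifold with \emph{$\mathcal{C}^2$-regular, non-vanishing} data $f_0,g_1$, whereas the corollary assumes only a general connected manifold and $f_0,g_1\in L^2(\vol)$. The regularity gap is benign: for the generator $L=\Delta/2$ the heat kernel is smoothing, so by Remark \ref{rem-03} any nonnegative $f_0,g_1\in L^2(\vol)$ yield functions $f_t,g_t$ that are positive and smooth on the interior $0<t<1$; hence the gradients and Hessians entering $\Gamma_2(\psi_t),\Gamma_2(\varphi_t)$ exist in the classical sense and Assumptions \ref{ass-03} are met there, so the $\mathcal{C}^2$ hypothesis on the data is unnecessary for the interior formula. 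Non-compactness is the genuinely delicate point: I would handle it by an exhaustion argument — approximating $\XX$ by relatively compact domains (equivalently, truncating $f_0,g_1$), re-running the second-derivative computation of Claim \ref{resd-02} on each approximation with the interior smoothness just described, and passing to the limit. Here the curvature hypothesis pays off twice: nonnegative Ricci curvature forces stochastic completeness of $R^o$ (so the path measure is conservative and well defined), and, more importantly, it makes every term under the integral sign-definite, so that the limit can be taken by monotone/Fatou-type arguments, the boundary terms from the integrations by parts being controlled by the integrability of the data with no need to manage delicate cancellations.
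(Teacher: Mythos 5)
Your argument is correct and follows the paper's intended route: the corollary is presented as an immediate consequence of Theorem~\ref{resd-06} with $V=0$, where $H''(t)=\left\langle \ud\big(\Gamma_2(\psi_t)+\Gamma_2(\varphi_t)\big),\mu_t\right\rangle\ge 0$ by Bochner's formula \eqref{eq-17} under the (implicit, but clearly intended) hypothesis $\Ric\ge0$, with the regularity of $f_t,g_t$ supplied by the smoothing of the heat kernel exactly as in your appeal to Remark~\ref{rem-03}. Your extra care about non-compactness (exhaustion, stochastic completeness, sign-definiteness of the integrand) goes beyond what the paper records, which only remarks that the compactness and $\mathcal{C}^2$ hypotheses of Theorem~\ref{resd-06} can be dropped because the needed regularity follows from the heat kernel.
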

Remark that in Theorem \ref{resd-06}, $\XX$ is assumed to be compact and $f_0,g_1$ must be $ \mathcal{C}^2$. This was assumed to ensure the regularity of the interpolation (Proposition \ref{res-03}). But in the present setting this regularity follows from the regularity of the heat kernel. 


\begin{questions} \ 
\begin{enumerate}[(a)]
\item
Is the converse of Corollary \ref{res-13} true?
\item
Is there a way to define a notion of $\kappa$-convexity on $\PX$ along entropic interpolations: $H( \mu_t)\le (1-t)H( \mu_0)+tH( \mu_1)-\kappa C( \mu_0, \mu_1) t(1-t)/2 $, $0\le t\le 1$? What term $C( \mu_0, \mu_1)$ should replace the quadratic transport cost $W^2_2( \mu_0, \mu_1)$ of the LSV and AGS (Ambrosio, Gigli and Savaré \cite{AGS05,AGS12}) theories?
\item
Is there a notion of ``entropic gradient flow'' related to entropic interpolations that would be similar to a  gradient flow  and would allow  interpreting heat flows as ``entropic gradient flows'' of the entropy. This question might be related to the previous one: the entropic cost $C( \mu_0, \mu_1)$ could play the role of a squared distance on $\PX.$
\end{enumerate}
\end{questions}

The LSV theory requires that the metric space $(\XX,d)$ is geodesic. Consequently,  discrete metric graphs are ruled out. Alternate approaches are necessary to develop a theory of lower bounded curvature on discrete metric graphs.

\begin{enumerate}
\item
 Bonciocat and Sturm \cite{BSt09}  have obtained precise   results for a large class of planar graphs by introducing  the notion of $h$-approximate $t$-midpoint interpolations:
$d(x_0,x_t)\le td(x_0,x_1)+h,$ $d(x_t,x_1)\le (1-t)d(x_0,x_1)+h$ where typically $h$ is of order 1.

\item
Erbar and Maas \cite{Maas11,EM11} and independently Mielke \cite{Mie11,Mie13} have shown that  the evolution of a reversible random walk on a graph is the gradient flow of an entropy for some distance on $\PX$ and derived curvature bounds for discrete graphs by following closely the gradient flow strategy developed by Ambrosio, Gigli and Savaré \cite{AGS05} in the setting of the LSV  theory.
\item
Recently, Gozlan, Roberto, Samson and Tetali \cite{GRST12} obtained curvature bounds by studying the convexity of entropy along binomial interpolations. On a 
 geodesic $(x:=z_0, z_1,\dots, z _{ d(x,y)}=:y )$ for the standard graph distance $d$, the binomial interpolation is defined by
$ \mu_t(z_k)= \mathcal{B}(d(x,y),t)(k),$ $0\le k\le d(x,y),$ $0\le t\le1,$ where $ \mathcal{B}(n,p)(k)=\begin{pmatrix}n\\k\end{pmatrix} p^k (1-p) ^{ n-k}$ is the usual binomial weight.
\end{enumerate}

We know by Lemma \ref{res-07} that
\begin{align} \label{eq-47}
\Theta_2 u(x)=\Big(Bu(x)\Big)^2&+\sum _{ y:x\sim y} [J_y(\XX)-J_x(\XX)] h\big(Du(x,y)\big)\,J_x(y)\notag\\&+\sum _{ (y,z):x\sim y\sim z} \Big[2e ^{ Du(x,y)} h\big(Du(y,z) \big)-h\big(Du(x,z) \big)\Big]\,J_x(y)J_y(z)
\end{align}
where $Bu(x)=\sum _{ y:x\sim y}(e ^{ Du(x,y)}-1)\,J_x(y)$ and  $h(a):=\ts(e^a-1)=ae^a-e^a+1,$ $ a\in\RR.$
\\
In the diffusion setting where $L=( \Delta-\nabla V\cdot\nabla)/2$, we have seen at Section \ref{sec-diff} that $ \Theta_2= \Gamma_2/2$ where $ \Gamma_2$ is given by the Bochner formula
\begin{equation} \label{eq-48}
\Gamma_2(u)=\|\nabla^2 u\|^2_{\mathrm{HS}}+\nabla^2V(\nabla u)+\Ric(\nabla u).
\end{equation}

\begin{questions}\ 
\begin{enumerate}
\item
In view of \eqref{eq-47} and \eqref{eq-48}, is \eqref{eq-47} a Bochner formula ? Where is the curvature ?
\item
Are the following definitions relevant?
\begin{enumerate}
\item
The $m$-reversible random walk generator $L$ has curvature   bounded below by $\kappa\in \mathbb{R}$ if
$$\sum_x \Theta_2(u)(x)m(x)\ge  \kappa\sum_x \Theta(u)(x)m(x),\ \forall u.$$
\item
The curvature at $x\in\XX$ of the Markov generator $L$ is 
$$\mathrm{curv}_L(x):=\inf_u \frac{\Theta_2(u)}{\Theta(u)}(x).$$
\end{enumerate}
\end{enumerate}
\end{questions}
A definition of curvature should be justified by its usefulness in terms of rate of convergence to equilibrium, concentration of measure or isoperimetric behavior.

Let us indicate that with the discrete Laplacian $Lu(x)=[u(x+1)-2u(x)+u(x-1)]/2$ on $ \mathbb{Z}$, a rather tedious computation leads to the desired flatness result:
$\mathrm{curv}_L(x) =0,$  for all $x\in\mathbb{Z}.$

\appendix

\section{Basic definitions}\label{sec-def}

This appendix section is part of the articles \cite{Leo12b,LRZ12}. We give it here for the reader's convenience.

\subsection*{Markov measures}

We slightly  extend   the notion of Markov property to unbounded positive measures on $\OO.$ 

\begin{definitions}\label{def-markov}
Any positive  measure on $\OO$ is called a \emph{path measure}.
Let $Q$ be a path measure.
\begin{enumerate}[(a)]
\item
It is said to be a \emph{conditionable path measure} if for any $0\le t\le 1,$ $Q_t$ is a $\sigma$-finite measure on $\XX.$
\item
It is said to be a Markov measure if it is conditionable and for all $0\le t\le 1$ and  $B\in \sigma(X _{[t,1]}),$ we have: $Q(B\mid X _{[0,t]})=Q(B\mid X_t).$
\end{enumerate}
\end{definitions}

When $Q$ has an infinite mass, the notion of conditional expectation must be handled with care. We refer to the following definition.

\begin{definition}[Conditional expectation]\label{def-05}
Let $\phi:\OO\to Y$ be a measurable map. Suppose that $\phi\pf Q$ is a $\sigma$-finite measure on $Y.$ Then, for any $f\in L^p(Q)$ with $p=1, 2$ or $\infty,$ the conditional expectation of $f$ knowing $\phi$ is the unique (up to $Q\ae$ equality) function $E_Q(f\mid \phi):=\theta_f(\phi)\in L^p(Q)$ such that for all  measurable function $h$ on $Y$ in $L ^{p_*}(m)$ where $1/p+1/p_*=1$, we have
$ 
\IO h(\phi)f\,dQ=\IO h(\phi)\theta_f(\phi)\,dQ.
$ 
\end{definition}

It is essential in this definition that the measure $\phi\pf Q$ is $\sigma$-finite on $Y$ to be allowed to invoke Radon-Nikodym theorem when proving the existence of $\theta_f.$ Inspecting the above definition of a  Markov measure, one sees that it is necessary that $Q _{[0,t]}$ and $Q_t$ are $\sigma$-finite for all $t\in\ii$ for the corresponding conditional expectations to be defined. But this is warranted by the requirement that $Q$ is conditionable, as shown by the following result.

\begin{lemma}
Let $Q$ be a path measure and $\mathcal{T}\subset\ii.$ For $Q_\mathcal{T}$ to be $\sigma$-finite, it is enough that $Q_{t_o}$ is $\sigma$-finite for some $t_o\in \mathcal{T}.$
\end{lemma}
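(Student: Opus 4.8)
The plan is to exploit the fact that $t_o\in\mathcal{T}$, so that the one-time marginal $Q_{t_o}$ factors through $Q_\mathcal{T}$ via a coordinate projection. Concretely, let $\pi:\XX^\mathcal{T}\to\XX$ denote the projection onto the $t_o$-th coordinate, $\pi\big((x_t)_{t\in\mathcal{T}}\big)=x_{t_o}$. Since $X_{t_o}=\pi\circ X_\mathcal{T}$ on $\OO$, pushing $Q$ forward gives the identity
\begin{equation*}
Q_{t_o}=(X_{t_o})\pf Q=\pi\pf\big((X_\mathcal{T})\pf Q\big)=\pi\pf Q_\mathcal{T}.
\end{equation*}
Thus $Q_{t_o}$ is nothing but the image of $Q_\mathcal{T}$ under the measurable map $\pi$, and $\sigma$-finiteness of the former will be transported back to the latter by pulling sets back through $\pi$.

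\textbf{Key steps.} First I would unwind the hypothesis: $Q_{t_o}$ being $\sigma$-finite on $\XX$ means there is a countable family of measurable sets $(A_n)_{n\ge1}$ in $\XX$ with $\bigcup_{n}A_n=\XX$ and $Q_{t_o}(A_n)<\infty$ for every $n$. Next, setting $B_n:=\pi^{-1}(A_n)\subset\XX^\mathcal{T}$, these sets are measurable for the product $\sigma$-field (the coordinate projection $\pi$ is measurable), and they cover $\XX^\mathcal{T}$ since
\begin{equation*}
\bigcup_{n}B_n=\pi^{-1}\Big(\bigcup_{n}A_n\Big)=\pi^{-1}(\XX)=\XX^\mathcal{T}.
\end{equation*}
Finally, using the factorization displayed above,
\begin{equation*}
Q_\mathcal{T}(B_n)=Q_\mathcal{T}\big(\pi^{-1}(A_n)\big)=(\pi\pf Q_\mathcal{T})(A_n)=Q_{t_o}(A_n)<\infty,
\end{equation*}
so $(B_n)_{n\ge1}$ is a countable measurable cover of $\XX^\mathcal{T}$ by sets of finite $Q_\mathcal{T}$-measure, which is precisely the $\sigma$-finiteness of $Q_\mathcal{T}$.

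\textbf{Main obstacle.} There is no serious difficulty here; the argument is purely measure-theoretic. The only point deserving a line of justification is the factorization $Q_{t_o}=\pi\pf Q_\mathcal{T}$, which rests on the observation that evaluating a path at time $t_o\in\mathcal{T}$ is the same as first recording the whole restriction $X_\mathcal{T}$ and then reading off its $t_o$-coordinate. Everything else follows from the standard behaviour of image measures under preimages.
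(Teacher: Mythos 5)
Your proof is correct and is essentially the paper's own argument: the paper likewise covers by the sets $\{X_{t_o}\in\XX_n\}$ for a $\sigma$-finite exhaustion $(\XX_n)$ of $\XX$ and uses the identity $Q_{\mathcal{T}}(X_{t_o}\in\XX_n)=Q_{t_o}(\XX_n)$, which is exactly your factorization $Q_{t_o}=\pi\pf Q_{\mathcal{T}}$ read on preimages. The only cosmetic difference is that the paper covers the image $X_{\mathcal{T}}(\OO)$ rather than all of $\XX^{\mathcal{T}}$, which changes nothing since $Q_{\mathcal{T}}$ is carried by that image.
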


\begin{proof}
Let $t_o\in \mathcal{T}$ be such that $Q _{t_o}$ is $\sigma$-finite with $\seq{\XX}n$ an increasing sequence of measurable subsets of $\XX$ such that $Q_{t_o}(\XX_n)<\infty$ and $\cup _{n\ge1}\XX_n=\XX.$ Then, $Q_\mathcal{T}$ is also $\sigma$-finite since $Q _{\mathcal{T}}(X _{t_o}\in\XX_n)=Q _{t_o}(\XX_n)$ for all $n\ge1$ and $\cup _{n\ge1}[X _{\mathcal{T}}(\OO)\cap \left\{X _{t_o}\in\XX_n\right\}]=X_\mathcal{T}(\OO).$
\end{proof}

Consequently, for any Markov measure $Q$ and any $\mathcal{T}\subset\ii$, the conditional expectation $E_Q(\cdot\mid X _{\mathcal{T}})$ is well-defined and since $\OO$ is a Polish space, there exists a regular conditional \emph{probability} kernel $Q(\cdot\mid X_\mathcal{T}):\OO\to\PO$ such that for all $f\in L^1(Q),$ $E_Q(f\mid X_\mathcal{T})=\IO f\,dQ(\cdot\mid X_\mathcal{T}),$ see \cite[Thm.\,10.2.2]{Dud02}.

\subsection*{Stationary path measures}

Let us make precise a couple of other known notions.

\begin{definitions}\label{def-01}
Let $Q$ be a path measure.
\begin{enumerate}[(a)]
\item
It is said to be \emph{stationary} if for all  $0\le t\le1,$ $Q_t=m$, for some $m\in \mathrm{M}_(\XX).$ One says that $Q$ is $m$-stationary.
\item
It is said to be \emph{reversible} if for any subinterval $[a,b]\subset \ii,$ we have $(\mathrm{rev}^{[a,b]})\pf Q _{[a,b]}=Q_{[a,b]}$ where $\mathrm{rev}^{[a,b]}$ is the time reversal on $[a,b]$ which is defined by $\mathrm{rev}^{[a,b]}[\eta](t)=\eta ({[a+b-t]^+})$ for any $\eta\in D([a,b],\XX),$ $t\in[a,b].$ 
\end{enumerate}
\end{definitions}

A reversible path measure is stationary. When $Q$ is reversible, one sometimes says that $m=Q_0=Q_1$ is a reversing measure for the forward kernel $(Q(\cdot\mid X_0=x);x\in\XX)$, or the backward kernel $(Q(\cdot\mid X_1=y);y\in\XX)$ or shortly for $Q$. One also says that $Q$ is $m$-reversible to emphasize the role of the reversing measure.

\subsection*{Relative entropy}

\newcommand{\PY}{ \mathrm{P}(Y)}

This subsection  is a short part of \cite[\S\,2]{Leo12b} which we refer to for more detail.
Let $r$ be some $\sigma$-finite positive measure on some  space $Y$. The relative entropy of the probability measure $p$ with respect to $r$ is loosely defined by
\begin{equation}\label{eq-01app}
H(p|r):=\int_Y \log(dp/dr)\, dp\in (-\infty,\infty],\qquad p\in \PY
\end{equation}
if $p\ll r$ and $H(p|r)=\infty$ otherwise. 
More precisely, when $r$ is a probability measure,  we have $$H(p|r)=\int_Y h(dp/dr)\,dr\in[0,\infty],\qquad p,r\in\PY$$ with $h(a)=a\log a-a+1\ge 0$ for all $a\ge0,$ (take $h(0)=1).$ Hence,  the definition \eqref{eq-01app} is meaningful. 
If $r$ is unbounded, one must restrict the definition of $H(\cdot|r)$ to some subset of $\PY$ as follows. As $r$ is assumed to be $\sigma$-finite, there exist  measurable functions $W:Y\to [1,\infty)$ such that
\begin{equation}\label{eq-03}
z_W:=\int_Y e ^{-W}\, dr<\infty.
\end{equation}
Define the probability measure $r_W:= z_W ^{-1}e ^{-W}\,r$ so that $\log(dp/dr)=\log(dp/dr_W)-W-\log z_W.$ It follows that for any $p\in \PY$ satisfying $\int_Y W\, dp<\infty,$ the formula 
\begin{equation*}
H(p|r):=H(p|r_W)-\int_Y W\,dp-\log z_W\in (-\infty,\infty]
\end{equation*}
is a meaningful definition of the relative entropy which is coherent in the following sense. If $\int_Y W'\,dp<\infty$ for another measurable function $W':Y\to[0,\infty)$ such that $z_{W'}<\infty,$ then $H(p|r_W)-\int_Y W\,dp-\log z_W=H(p|r _{W'})-\int_Y W'\,dp-\log z_{W'}\in (-\infty,\infty]$.
\\
Therefore, $H(p|r)$ is well-defined for any $p\in \PY$ such that $\int_Y W\,dp<\infty$ for some measurable nonnegative function $W$ verifying \eqref{eq-03}.


\end{document}